\def\itemn#1{\item[\hspace{0.6mm} {\rm (#1)}]}
\def\itemm#1{\item[\indent {\rm (#1)}]}
\renewcommand{\geq}{\geqslant}
\renewcommand{\leq}{\leqslant}
\renewcommand{\ge}{\geqslant}
\newcommand{\sHom}{\sH\!om}
\def\red{{\rm red}}
\def\toto{\rightrightarrows}
\def\too{\longrightarrow}
\def\tooo{\relbar\joinrel\longrightarrow}
\def\into{\hookrightarrow}
\newcommand*{\intoo}{\ensuremath{\lhook\joinrel\relbar\joinrel\rightarrow}}
\def\onto{\twoheadrightarrow}
\def\isomto{\xrightarrow{\,\smash{\raisebox{-0.5ex}{\ensuremath{\scriptstyle\sim}}}\,}}
\def\To{\Rightarrow}
\renewcommand{\hat}{\widehat}
\newcommand{\eps}{\epsilon}
\newcommand*{\defeq}{\mathrel{\rlap{%
                     \raisebox{0.3ex}{$\m@th\cdot$}}%
                     \raisebox{-0.3ex}{$\m@th\cdot$}}%
                     =}
\newtheorem{counter}[subsubsection]{$\!\!$}
\newtheorem{subcounter}[subsection]{$\!\!$}
\newenvironment{defi}{\begin{counter} \rm {\bf Definition.}}{\end{counter}}
\newenvironment{prop}{\begin{counter} {\bf Proposition.}}{\end{counter}}
\newenvironment{lemma}{\begin{counter} {\bf Lemma.}}{\end{counter}}
\newenvironment{coro}{\begin{counter} {\bf Corollary.}}{\end{counter}}
\newenvironment{theo}{\begin{counter} {\bf Theorem.}}{\end{counter}}
\newenvironment{rema}{\begin{counter} \rm {\bf Remark.}}{\end{counter}}
\newenvironment{remas}{\begin{counter} \rm {\bf Remarks.}}{\end{counter}}
\newenvironment{exam}{\begin{counter} \rm {\bf Example.}}{\end{counter}}
\newenvironment{noth}[1]{\begin{counter} {\bf #1.}\rm}{\end{counter}}
\newenvironment{noth*}[1]{\begin{subcounter} {\bf #1.}\rm}{\end{subcounter}}
\newenvironment{proof}{{\flushleft \bf Proof~:}}{\hfill $\square$ \vspace{5mm}}
\DeclareMathOperator{\sep}{sep}
\DeclareMathOperator{\pf}{pf}
\DeclareMathOperator{\copf}{copf}
\DeclareMathOperator{\Nil}{Nil}
\DeclareMathOperator{\Spec}{Spec}
\DeclareMathOperator{\Frob}{F}
\DeclareMathOperator{\et}{\acute{e}t}
\DeclareMathOperator{\colim}{colim}
\DeclareMathOperator{\Hom}{Hom}
\DeclareMathOperator{\Aut}{Aut}
\DeclareMathOperator{\Isom}{Isom}
\DeclareMathOperator{\id}{id}
\DeclareMathOperator{\pr}{pr}
\DeclareMathOperator{\swap}{sw}
\DeclareMathOperator{\gpd}{gpd}
\DeclareMathOperator{\coeq}{coeq}
\DeclareMathOperator{\aff}{aff}
\DeclareMathOperator{\surj}{surj}
\DeclareMathOperator{\dom}{dom}
\DeclareMathOperator{\Ass}{Ass}
\DeclareMathOperator{\Emb}{Emb}
\DeclareMathOperator{\Fdiv}{Fdiv}
\DeclareMathOperator{\GL}{GL}
\DeclareMathOperator{\Pro}{Pro}
\DeclareMathOperator{\eq}{eq}
\DeclareMathOperator{\Cat}{\bf Cat}
\DeclareMathOperator{\Sp}{\bf Sp}
\DeclareMathOperator{\Vect}{\bf Vect}
\DeclareMathOperator{\Perf}{\bf Perf}
\DeclareMathOperator{\Groupoid}{\bf Groupoid}
\DeclareMathOperator{\Pregroupoid}{\bf Pregroupoid}
\DeclareMathOperator{\FEt}{\bf F\acute{E}t}
\def\cO{{\cal O}}  
   \def\cY{{\cal Y}}
\def\cZ{{\cal Z}}
\newcommand\FF{\mathbb{F}}
\newcommand\GG{\mathbb{G}}
\newcommand\NN{\mathbb{N}}
\newcommand\QQ{\mathbb{Q}}
\def\sC{\mathscr{C}} \def\sD{\mathscr{D}} \def\sE{\mathscr{E}}
  \def\sH{\mathscr{H}}
\def\sI{\mathscr{I}} \def\sM{\mathscr{M}} \def\sN{\mathscr{N}}
 \def\sP{\mathscr{P}} \def\sQ{\mathscr{Q}}
\def\sR{\mathscr{R}} \def\sS{\mathscr{S}} \def\sU{\mathscr{U}}
\def\sX{\mathscr{X}} \def\sY{\mathscr{Y}} \def\sZ{\mathscr{Z}}
\def\sfE{{\sf E}}
\begin{document}

\begin{center}
{\bf \Large Unramified $\Frob$-divided objects
and the \'etale fundamental

\medskip

pro-groupoid in positive characteristic}

\bigskip
\bigskip

Yuliang Huang,
Giulio Orecchia,
Matthieu Romagny

\bigskip
\bigskip
\today
\end{center}

\bigskip

\begin{center}
\begin{minipage}{17.4cm}
{\small {\bf Abstract.} Fix a scheme $S$ of characteristic $p$. Let $\sM$
be an $S$-algebraic stack and let $\Fdiv(\sM)$ be the stack of
$\Frob$-divided objects, that is sequences of objects $x_i\in\sM$
with isomorphisms $\sigma_i:x_i\to \Frob^*x_{i+1}$.
Let $\sX$ be a flat, finitely presented $S$-algebraic stack and
$\sX\to \Pi_1(\sX/S)$ the \'etale fundamental pro-groupoid,
constructed in the present text. We prove that if $\sM$ is a
quasi-separated Deligne-Mumford stack and $\sX\to S$ has
geometrically reduced fibres, there is a bifunctorial isomorphism
of stacks $\sHom(\Pi_1(\sX/S),\sM) \simeq \sHom(\sX,\Fdiv(\sM))$.
In particular, the system of relative Frobenius morphisms
$\sX\to \sX^{p/S}\to \sX^{p^2/S}\to\dots$ allows to recover
the space of connected components $\pi_0(\sX/S)$ and
the relative \'etale
fundamental gerbe. In order to obtain these results, we study
the existence and properties of relative perfection for algebras
in characteristic $p$.}
\end{minipage}
\end{center}

\noindent
2010 Mathematics Subject Classification:
14G17,
14F35
14D23,
13A35

\noindent Keywords: relative Frobenius, $\Frob$-divided object,
perfection, coperfection, \'etale fundamental pro-groupoid,
\'etale affine hull

\tableofcontents

\section{Introduction}

\begin{noth*}{\bf Motivation}
\addcontentsline{toc}{subsection}{\thesubsection. Motivation}
Using Cartier's Theorem on the descent of vector bundles under
Frobenius, Gieseker and Katz were able to give another viewpoint
on stratified vector bundles on a smooth variety of characteristic $p$.
Namely, they showed that these objects are equivalent to
{\em $\Frob$-divided vector bundles}, that is, sequences $\{E_i\}_{i\ge 0}$
of vector bundles and isomorphisms $E_i\simeq\Frob^*E_{i+1}$
where~$\Frob$ is the Frobenius endomorphism of the variety,
see~\cite{Gi75}. Since then, these have occupied an important place
in the research on vector bundles in characteristic~$p$. Looking
only at the recent literature, we can mention the works of
dos Santos~\cite{DS07}, \cite{DS11}, Esnault and Mehta \cite{EM10},
Berthelot~\cite{Be12}, Tonini and Zhang~\cite{TZ17}.

More generally, one can expect that in the study of curves,
or morphisms, or torsors (etc) in characteristic~$p$, the
$\Frob$-divided curves, morphisms, torsors (etc) are natural objects
which are likely to play an important role. In the present article,
for any algebraic stack $\sM$ we introduce the stack $\Fdiv(\sM)$ of
$\Frob$-divided objects of $\sM$ and we seek to understand it
(see Remark~\ref{rema:warning on notation} for a warning on notation).
Note that $\Frob$-divided vector bundles correspond to the case where
$\sM$ is the classifying stack $B\GL_n$, a typical example of Artin
stack with affine positive-dimensional inertia. In this article,
we study the somehow opposite case where~$\sM$ is a Deligne-Mumford
stack. In this case we call the objects of $\Fdiv(\sM)$
{\em unramified $\Frob$-divided objects}. Roughly speaking, our main
result says that unramified $\Frob$-divided objects defined over
geometrically reduced bases are quasi-isotrivial. In order to achieve
this, we establish various results on the perfection of algebras,
and on the coperfection of algebraic spaces and stacks, which have
independent interest. Let us now give more precise statements.
\end{noth*}

\begin{noth*}{\bf Quasi-isotriviality of unramified $\Frob$-divided objects}
\addcontentsline{toc}{subsection}{\thesubsection. Quasi-isotriviality
of unramified $\Frob$-divided objects}
Let $S$ be an algebraic space and $\sX\to S$ an algebraic stack.
Throughout the paper, we say that $\sX\to S$ is {\em separable}
if it has geometrically reduced fibres. When $\sX\to S$ is
flat, finitely presented and separable, we construct its
{\em \'etale fundamental pro-groupoid} $\sX\to\Pi_1(\sX/S)$. This is
a 2-pro-object of the 2-category of \'etale algebraic stacks, with
coarse moduli space the space of connected components $\pi_0(\sX/S)$,
see~\cite{Rom11}, seen as a constant 2-pro-object. When~$S$ is the
spectrum of a field~$k$ and~$\sX$ is geometrically connected,
the \'etale fundamental pro-groupoid $\Pi_1(\sX/S)$ is representable
in the 2-category of stacks by the \'etale fundamental gerbe
$\Pi^{\et}_{\sX/k}$ of Borne and Vistoli \cite[\S~8]{BV15}.
In characteristic~$p$, \'etale morphisms are perfect and it
follows that the natural map $\Fdiv(\Pi_1(\sX/S))\to \Pi_1(\sX/S)$
is an isomorphism.

\bigskip

\noindent {\bf Theorem A.}
{\em Let $S$ be a noetherian algebraic space of characteristic~$p$.
Let $\sX\to S$ be a flat, finitely presented, separable algebraic
stack. Let $\sM\to S$ be a quasi-separated Deligne-Mumford stack.
Then by applying $\Fdiv$ and precomposing with
$\sX\to\Pi_1(\sX/S)$, we obtain an isomorphism
\[
\sHom(\Pi_1(\sX/S),\sM) \isomto \sHom(\sX,\Fdiv(\sM))
\]
between the stacks of morphisms of pro-Deligne-Mumford stacks
(with $\sM$ seen as a constant 2-pro-object) on the source,
and morphisms of stacks on the target. This isomorphism is
functorial in $\sX$ and $\sM$.}

\bigskip

See~\ref{theo:coperfection_stacks}. Intuitively, this means that any
$\Frob$-divided object of $\sM$ over the base $\sX$ becomes
constant after \'etale surjective base changes on $S$ and on $\sX$,
i.e. is quasi-isotrivial in a suitable sense. Here is a simple
illustration. Let us assume that $X$ is a connected, simply
connected variety over a separably closed field $k$.
Then Theorem~A implies that all $\Frob$-divided families $C\to X$
of stable $n$-pointed
curves of genus $g$ with $2g-2+n>0$ are constant. The same assertion
with vector bundles replacing curves is the (almost exact) analogue
of Gieseker's conjecture, proved by Esnault and Mehta~\cite{EM10}.
However, Esnault and Mehta's situation and
ours are different in nature. In fact, in~{\em loc. cit.} as well as
in our work, the approach has two comparable steps. First one uses
the fact that objects are described by a morphism from a suitable
fundamental group(oid) scheme $\Pi$ (the \'etale fundamental
pro-groupoid for us, and the stratified fundamental group
scheme in~\cite{EM10}). Second one proves that under
the given assumptions, the group scheme $\Pi$ vanishes. The crucial
difference is that in our setting, the first step is the difficult part
of the argument and the second step is almost trivial, while for
Esnault and Mehta the first step is easy and the second step is where
all the effort lies.

If contemplated with a focus on $\sX$, Theorem~A gives
information on its coperfection. The viewpoint being substantially
different, it is worth giving the corresponding version of the
statement. For this we denote by $\sX^{p^i/S}$ the $i$-th
Frobenius twist of $\sX/S$ and
\[
\Frob_i:\sX^{p^i/S}\too \sX^{p^{i+1}/S}
\]
the relative Frobenius morphism.

\bigskip

\noindent {\bf Theorem A'.}
{\em Let $S$ be a noetherian algebraic space of characteristic~$p$.
\begin{trivlist}
\itemn{1} Let $X\to S$ be a flat, finitely presented,
separable morphism of algebraic spaces.
The inductive system of relative Frobenii
\[
\begin{tikzcd}[column sep=8mm]
X \ar[r, "\Frob_0"] & X^{p/S} \ar[r, "\Frob_1"] &
X^{p^2/S} \ar[r] & \dots
\end{tikzcd}
\]
admits a colimit in the category of algebraic spaces over $S$.
This colimit is the algebraic space of connected components
$\pi_0(X/S)$; it is a coperfection of $X\to S$.
\itemn{2} Let $\mathscr{X}\to S$ be a flat, finitely presented,
separable algebraic stack. The inductive system of relative
Frobenii
\[
\begin{tikzcd}[column sep=8mm]
\sX \ar[r, "\Frob_0"] & \sX^{p/S} \ar[r, "\Frob_1"] &
\sX^{p^2/S} \ar[r] & \dots
\end{tikzcd}
\]
admits a 2-colimit in the 2-category of pro-quasi-separated
Deligne-Mumford stacks over $S$. This 2-colimit is the
pro-\'etale stack $\Pi_1(\mathscr{X}/S)$;
it is a 2-coperfection of $\sX/S$ in the 2-category
of pro-Deligne-Mumford stacks.
\end{trivlist}}

\bigskip

See Remarks~\ref{rem:on coperfection 1} and
\ref{rem:on coperfection 2}. Statement~(2) is equivalent
to Theorem~A as explained in Remark~\ref{rema:coperf determines perf}.
Note that~(2) includes~(1) as a special case,
because $\Pi_1(\mathscr{X}/S)$ has coarse moduli space $\pi_0(\sX/S)$.
We include~(1) for emphasis and also because the proof actually
proceeds by deducing~(2) from~(1).

Theorem A' seems to suggest that taking coperfection in the higher
category of pro-Deligne-Mumford $n$-stacks
would eventually recover the whole relative \'etale homotopy type
of $X\to S$. We plan to investigate this eventuality in a future
article.
\end{noth*}

\begin{noth*}{Perfection of algebras; largest \'etale subalgebras}
\addcontentsline{toc}{subsection}{\thesubsection. Perfection
of algebras; largest \'etale subalgebras}
Within the category of algebras, the situation is somehow
more subtle. Given a characteristic~$p$ ring $R$ and an algebra
$R\to A$, denote by
\[
\Frob_i:A^{p^{i+1}/R}\to A^{p^i/R}
\]
the relative Frobenius of $A^{p^i/R}$, the $i$-th
Frobenius twist of $A$. Define the {\em preperfection}
of $A/R$:
\[
A^{p^{\infty}/R}=
\lim\,\big(\!\!
\begin{tikzcd}[column sep=8mm]
\cdots \ A^{p^2/R} \ar[r, "\Frob_1"] &
A^{p/R} \ar[r, "\Frob_0"] & A
\end{tikzcd}
\!\!\big).
\]
The name is explained by a surprising fact: the algebra
$A^{p^{\infty}/R}$ is not perfect in general, even if $R\to A$
is flat, finitely presented and separable. We give an example
of this with $R$ equal to the local ring of a nodal curve
singularity (see~\ref{example_non_perfect}). In our example
the double preperfection is perfect but we do not know if
iterated preperfections should converge to a perfect
algebra in general. In the affine case $S=\Spec(R)$ and
$X=\Spec(A)$, we write $\pi_0(A/R)$ instead of $\pi_0(X/S)$.
What Theorem~A' implies in this case is that there is an
isomorphism of $R$-algebras:
\[
\cO(\pi_0(A/R))\isomto A^{p^{\infty}/R}.
\]
Here $\cO(-)$ is the functor of global functions. Given the bad
properties of the rings under consideration, this could not really
be anticipated: indeed, in general $\cO(\pi_0(A/R))$ is not
\'etale and $A^{p^{\infty}/R}$ is not perfect. Although we present
the above isomorphism of $R$-algebras as a corollary to Theorem~A',
the structure of the proof is actually to first establish
this isomorphism of algebras (see~\ref{theo:preperfection_algebra})
and then deduce the geometric statement for spaces and stacks
(Theorem~A').

This begs for a further study of perfection of algebras.
Our general expectation is that for algebras of finite type,
there should exist a largest \'etale subalgebra and this should
be (at least close to) the perfection of $R\to A$. In striving
to materialize this picture, we study \'etale hulls in more detail.
We take up recent work of Ferrand~\cite{Fe19} and prove the
following result which is not special to characteristic $p$.

\bigskip

\noindent {\bf Theorem B.}
{\em Let $S$ be a noetherian, geometrically unibranch algebraic
space without embedded points. Let $f:X\to S$ be a faithfully flat,
finitely presented morphism of algebraic spaces.
\begin{trivlist}
\itemn{1} The category of factorizations $X\to E\to S$ such that
$X\to E$ is a schematically dominant morphism of algebraic spaces
and $E\to S$ is \'etale and affine is a lattice, that is, any two
objects have
a supremum and an infimum (for the obvious relation of domination).
Moreover it has a largest element $\pi^a(X/S)$.
\itemn{2} The functor $X\mapsto \pi^a(X/S)$ is left adjoint to the inclusion
of the category of \'etale, affine $S$-schemes into the category
of faithfully flat, finitely presented $S$-algebraic spaces.
\end{trivlist}}

\bigskip

See Theorem~\ref{theo_affine_etale_hull}
and Corollary~\ref{lemma:fonctoriality_for_pi^a}.
The largest element $\pi^a(X/S)$ is the relative spectrum of
a sheaf of $\cO_S$-algebras which is the largest \'etale
subalgebra of $f_*\cO_X$. It is called the {\em \'etale affine
hull} of $X\to S$. When $S$ is artinian or $X\to S$ is
separable, the functor $\pi_0(X/S)$ is an \'etale algebraic
space and we have morphisms:
\[
X\too \pi_0(X/S)\too \pi^a(X/S).
\]
We can take advantage of this to analyze perfection of algebras
in characteristic $p$. When $S=\Spec(R)$ and $X=\Spec(A)$,
the largest \'etale subalgebra is written $A^{\et/R}\subset A$,
that is $\pi^a(A/R)=\Spec(A^{\et/R})$. We then obtain the
following positive results.

\bigskip

\noindent {\bf Theorem C.}
{\em Let $R\to A$ be a flat, finite type morphism of noetherian
rings of characteristic $p$.
Assume that one of the following holds:
\begin{trivlist}
\itemn{1} $R$ is artinian,
\itemn{2} $R$ is geometrically $\QQ$-factorial (e.g. regular)
and $R\to A$ is separable,
\itemn{2} $R$ is one-dimensional, reduced, geometrically unibranch,
and $R\to A$ is separable.
\end{trivlist}
Then the natural maps give rise to isomorphisms:
\[
A^{\et/R}\isomto \cO(\pi(A/R)) \isomto A^{p^\infty/R}.
\]}
\end{noth*}

See Theorem~\ref{theorem:artinian base case},
Proposition~\ref{prop:set_theoretically_factorial_etale} and
Corollary~\ref{coro:ideal_situation_regular_case}.

\begin{noth*}{Overview of the paper and notations}
\addcontentsline{toc}{subsection}{\thesubsection. Overview of the paper}
Each section starts with a small description of contents,
where the reader will find more detail.
In Section~\ref{section:coperfection} we give definitions and
basic facts on perfect stacks, perfection and coperfection.
In Section~\ref{section:complements_pi0} which makes no assumption
on the characteristic, we give complements on the functor $\pi_0$.
We study factorizations through an \'etale affine scheme, and we
prove Theorem~B. Finally we prove two pushout results that
allow to view $\pi_0(X/S)$ as glued from simpler pieces (the simpler
pieces being either~$\pi_0$ of an atlas or a completion from
a closed fibre), to be used in the last two sections. In
Section~\ref{section:perf_of_algebras}
where we study the commutative algebra of perfection, proving
the results summarized in Theorem~C. Finally in
Section~\ref{section:coperfection_spaces_stacks} we prove
Theorems~A and A', first for algebraic spaces and then for
algebraic stacks.

All sheaves and stacks are considered for the fppf topology
unless explicitly stated otherwise. We denote sets, sheaves
and stacks of homomorphisms by the symbols $\Hom$,
$\underline\Hom$ and $\sHom$ respectively.
\end{noth*}


\begin{noth*}{Acknowledgements}
\addcontentsline{toc}{subsection}{\thesubsection. Acknowledgements}
We express warm thanks to Daniel Ferrand for his detailed reading
of Section~\ref{section:complements_pi0} and his successful efforts
to dissuade us from trying to prove the existence of the \'etale
affine hull in exceeding generality.
We are grateful to Fabio Tonini and Lei Zhang for enlightening
discussions on the\'etale fundamental pro-groupoid. We also thank
Niels Borne, Johann Haas and Angelo Vistoli for kind answers to our
questions. The three authors are supported by the Centre Henri
Lebesgue, program ANR-11-LABX-0020-01 and would like to thank the
executive and administrative staff of IRMAR and of the Centre Henri Lebesgue
for creating an attractive mathematical environment.
\end{noth*}

\section{Perfection and coperfection}
\label{section:coperfection}

Throughout this section, we let $S$ be an algebraic space of
characteristic~$p$.
Our purpose is to make some preliminary
remarks on perfection and coperfection: definitions and formal
properties (\ref{ss:cat definitions} and \ref{ss:base restriction}),
description in the 2-category of stacks (\ref{ss:case of stacks}),
and structure of perfect algebraic stacks (\ref{ss:case of alg stacks}).

There is unfortunately no uniform use of the word ``perfection'' in
the literature. Our convention is to call {\em perfection} resp.
{\em coperfection} the right adjoint, resp. the left adjoint, to
the inclusion of the full subcategory of perfect objects in the
ambient category. This choice is prompted by the fact that in most
cases of existence, the construction of perfections uses limits
while the construction of coperfections uses colimits. For example,
this is the way one can form the perfection $A^{\pf}$ and the
coperfection $A^{\copf}$ of an $\FF_p$-algebra $A$ with absolute
Frobenius $\Frob_{\!A}$:
\[
A^{\pf}=
\lim\big(\!\!
\begin{tikzcd}[column sep=7mm]
\cdots \ A \ar[r, "\Frob_{\!A}"] & A \ar[r, "\Frob_{\!A}"] & A
\end{tikzcd}
\!\!\big);\qquad
A^{\copf}=
\colim\big(\!\!
\begin{tikzcd}[column sep=7mm]
A \ar[r, "\Frob_{\!A}"] & A \ar[r, "\Frob_{\!A}"] & A \ \cdots
\end{tikzcd}
\!\!\big).
\]
We emphasize that our interest is in perfection of algebras, and
coperfection of algebraic spaces and stacks. This means
that our setting is {\em relative} (over a possibly imperfect base)
and {\em geometric} (with schemes, spaces and stacks). Both
features introduce difficulties; we do not know if
perfection of algebras and coperfection of algebraic spaces and
stacks exist in full generality.

\subsection{Categorical definitions}
\label{ss:cat definitions}

\begin{noth}{Frobenius, perfect objects}
Let $f:X\to S$ be a fibred category over $S$ and let
$X^{p/S}=X\times_{S,\Frob_S} S$ be its Frobenius twist.
The {\em absolute Frobenius} is the functor $\Frob_X:X\to X$
defined by $\Frob_X(x)=\Frob_T^*x$, for all $T/S$ and $x\in X(T)$.
The {\em relative Frobenius} is the functor
$\Frob_{X/S}\defeq (\Frob_X,f):X\to X^{p/S}$. Note
that $\Frob_X$ is not a morphism of fibred categories over $S$
while $\Frob_{X/S}$ is. We say that $X\to S$ is {\em perfect}
if $\Frob_{X/S}$ is an isomorphism of fibred categories.
\end{noth}

\begin{noth}{Perfection and coperfection}
Let $C$ be a fibred 2-category over $S$ whose objects are fibred
categories of the type just discussed. We write $\sHom_C(X,Y)$
the categories of morphisms in $C$, and $\Hom_C(X,Y)$ the object
sets of the latter. The objects $X\in C$
which are perfect form a full 2-subcategory $\Perf(C)$ whose
inclusion we denote $i:\Perf(C)\to C$. Now let $X\in C$ be any
object. If the functor $\Perf(C)\to\Cat$, $P\mapsto\sHom_C(iP,X)$
is 2-representable then we call the representing object the
{\em 2-perfection of $X$} and denote it $X^{\pf}$. If the functor
$\Perf(C)^\circ\to\Cat$, $P\mapsto\sHom_C(X,iP)$ is 2-representable
then we call the representing object the {\em 2-coperfection of $X$}
and denote it $X^{\copf}$. We often simply that {\em perfection}
and {\em coperfection} for simplicity. Hence, if all objects have perfections
(resp. coperfections) then the functor $X\to X^{\pf}$
(resp. the functor $X\to X^{\copf}$) is right (resp. left) adjoint
to the inclusion $i$. Note that if a given $X$ of interest may be
seen as an object of different fibred 2-categories $C$ and $C'$,
then its hypothetical perfections in $C$ and $C'$ differ in
general, and similarly for its hypothetical coperfections.
\end{noth}

\begin{noth}{Cofibred setting}
While algebraic spaces and stacks and the 2-categories that contain
them fall under the scope of the ``fibred'' categorical setting, algebras
and the categories that contain them live in the ``cofibred'' categorical
setting. The cofibred analogues of the notions just presented exist
with the obvious modifications; notably, for a cofibred category
$A\to S$, the relative Frobenius is a functor
$\Frob_{A/S}:A^{p/S} \to A$. In this setting, perfection (resp.
coperfection) is again defined as the right (resp. left) adjoint
of the inclusion of perfect objects.
\end{noth}

\begin{noth}{Formal properties of Frobenius}
\label{Formal properties}
If $f:X\to Y$ is a morphism of fibred categories over~$S$, we can
define $X^{p/Y}\defeq X\times_{Y,\Frob_Y} Y$ and relative Frobenius
$\Frob_{X/Y}\defeq (\Frob_X,f):X\to X^{p/Y}$. We say that $f$ is
{\em (relatively) perfect} if $\Frob_{X/Y}$ is an isomorphism.
If $g:Y\to Z$ is another morphism of fibred categories over $S$,
we have $\Frob_{X/Z}=(f^{p/Z})^*\Frob_{Y/Z}\circ\Frob_{X/Y}$
as one can see from the diagram with cartesian squares:
\[
\begin{tikzcd}[column sep=20mm]
X \arrow[r, "\Frob_{X/Y}"]
\arrow[rr, bend left, "\Frob_{X/Z}",out=60,in=120,distance=1cm] &
X^{p/Y} \arrow[r, "(f^{p/Z})^*\Frob_{Y/Z}"]
\arrow[d] \arrow[rd, "\square",phantom] &
X^{p/Z} \arrow[r] \arrow[d,"f^{p/Z}"]
\arrow[rd, "\square",phantom] & X \arrow[d, "f"] \\
& Y \arrow[r, "\Frob_{Y/Z}"'] &
Y^{p/Z} \arrow[r] \arrow[d] \arrow[rd, "\square",phantom] & Y \arrow[d] \\
& & Z \arrow[r, "\Frob_Z"'] & Z
\end{tikzcd}
\]
Using these remarks, one checks the following facts:
\begin{itemize}
\item[(i)] perfect morphisms are stable by base change;
\item[(ii)] perfect morphisms are stable by composition;
\item[(iii)] morphisms between fibred categories perfect over $Z$
are perfect over $Z$;
\item[(iv)] if $X\to Y$, $X\to Z$ are perfect and $f^{p/Z}$
descends isomorphisms (e.g. $f$ is a perfect and faithfully flat
quasi-compact morphism of algebraic stacks), then $Y\to Z$ is perfect.
\end{itemize}
\end{noth}

\subsection{Base restriction}
\label{ss:base restriction}

For the sake of simplicity, let us come back to algebraic spaces.
Let $f:S'\to S$ be a morphism of algebraic spaces. The
{\em base restriction along $f$} is the functor that sends an
$S'$-algebraic space $X'$ to the $S$-algebraic space $X'\to S'\to S$.
We denote by $f_!X'$ the base restriction. The functor $f_!$ is
left adjoint to the pullback $f^*$. It should not be confused with
the Weil restriction functor $f_*$ which is right adjoint to $f^*$.
We will need to use the fact that coperfection commutes with base
restriction. This is a consequence of the simple categorical fact
that if two functors commute and have left adjoints, then the left
adjoints commute. Here is a precise statement in our context.

\begin{lemma} \label{lemma:coperf and base restr}
Let $X,T,S$ be $\FF_p$-algebraic spaces. Let $f:T\to S$ be a
morphism which is relatively perfect, and $X\to T$ a morphism
which admits a coperfection $X^{\copf}$. Then $f_!(X^{\copf})$
is a coperfection for $f_!X$. In a formula, we obtain an isomorphism:
\[
f_!(X^{\copf}) \isomto (f_!X)^{\copf}.
\]
\end{lemma}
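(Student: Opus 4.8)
The plan is to exhibit the claimed isomorphism by checking the universal property of the coperfection of $f_!X$ directly, using that $f$ is relatively perfect. First I would unwind the definitions: a coperfection of $f_!X$ is an object $P$ of the category of perfect $S$-algebraic spaces equipped with a morphism $f_!X \to P$ over $S$, such that every morphism from $f_!X$ to a perfect $S$-space $Q$ factors uniquely through $P$. The candidate is $P \defeq f_!(X^{\copf})$, which comes with the morphism $f_!X \to f_!(X^{\copf})$ obtained by applying $f_!$ to the coperfection map $X \to X^{\copf}$ of $T$-spaces. Note that $f_!(X^{\copf})$ is genuinely perfect over $S$: indeed $X^{\copf}$ is perfect over $T$, hence $X^{\copf}\to T$ is a perfect morphism; since $f:T\to S$ is relatively perfect, the composite $X^{\copf}\to T\to S$ is perfect over $S$ by stability under composition (\ref{Formal properties}(ii)), which is exactly the statement that $f_!(X^{\copf})$ is a perfect $S$-algebraic space.

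Next I would verify the universal property. Let $Q\to S$ be a perfect $S$-algebraic space and let $g:f_!X\to Q$ be an $S$-morphism; concretely $g$ is a $T$-morphism $X\to f^*Q$ (via the adjunction $f_!\dashv f^*$, i.e.\ $\Hom_S(f_!X,Q)=\Hom_T(X,f^*Q)$). The key point is that $f^*Q$ is perfect over $T$: perfect morphisms are stable under base change (\ref{Formal properties}(i)), so $Q\to S$ perfect implies $Q\times_S T\to T$ perfect. Therefore, by the universal property of $X^{\copf}$ as a coperfection over $T$, the $T$-morphism $X\to f^*Q$ factors uniquely through $X\to X^{\copf}$, giving a unique $T$-morphism $X^{\copf}\to f^*Q$. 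Transporting back across the adjunction $f_!\dashv f^*$ yields a unique $S$-morphism $f_!(X^{\copf})\to Q$ through which $g$ factors. This establishes that $f_!(X^{\copf})$ represents $Q\mapsto \Hom_S(f_!X,Q)$ on perfect $S$-spaces, hence is the coperfection $(f_!X)^{\copf}$; chasing the isomorphism shows it is the natural one. Since we are in a 2-categorical setting I would run the same argument at the level of $\sHom$-categories rather than just $\Hom$-sets, using that $f_!\dashv f^*$ is a 2-adjunction and that base change and composition of relatively perfect morphisms behave 2-functorially.

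The only genuinely delicate point — and the one I expect to be the main obstacle — is the verification that $f^*Q$ is perfect \emph{over $T$}: this is where relative perfection of $f$ is not used, only base change, so in fact it is automatic; the place where relative perfection of $f$ does the real work is in showing $f_!(X^{\copf})$ is perfect over $S$, as noted above, and one must be careful that ``perfect over $S$'' for the composite $X^{\copf}\to S$ is the right notion (absolute perfection of the $S$-space, equivalently relative perfection of the structure morphism to $S$), which follows from item (ii) of \ref{Formal properties} together with the observation that an $S$-algebraic space $Y$ is perfect precisely when $Y\to S$ is a relatively perfect morphism. Beyond that, everything is the formal ``left adjoints of commuting functors commute'' principle applied to the square of adjunctions $f_!\dashv f^*$ and (coperfection) $\dashv$ (inclusion of perfect objects), so no substantial estimate or construction is needed.
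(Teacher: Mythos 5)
Your proof is correct and is essentially identical to the paper's: both verify the universal property via the adjunction $f_!\dashv f^*$, using that $f^*$ preserves perfect objects by base change and that $f_!$ preserves them because relative perfection is stable under composition with the relatively perfect $f$. No discrepancies to report.
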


\begin{proof}
Let $\Sp_S$ be the category of $S$-algebraic spaces, and
$i_S:\Perf_S\to\Sp_S$ the inclusion of perfect objects.
Since $f:T\to S$ is relatively perfect and relatively perfect morphisms
are stable by composition, the functor $f_!$ maps $\Perf_T$
into $\Perf_S$, that is, it commutes with $i_S$ and $i_T$.
Similarly $f^*$ maps $\Perf_S$ into $\Perf_T$.
For each $Y\in\Perf_S$ we have canonical bijections:
\begin{align*}
\Hom_{\Sp_S}(f_!X,i_SY)
& =\Hom_{\Sp_T}(X,f^*i_SY) \\
& =\Hom_{\Sp_T}(X,i_Tf^*Y) \\
& =\Hom_{\Perf_T}(X^{\copf},f^*Y) \\
& =\Hom_{\Perf_S}(f_!X^{\copf},Y).
\end{align*}
This shows that $f_!X^{\copf}$ is the coperfection of $f_!X$.
\end{proof}

The same result holds, with the same proof, for pairs of commuting
adjoints in similar situations.

\subsection{The case of stacks; $\Frob$-divided objects}
\label{ss:case of stacks}

In this section we describe concretely the perfection and
coperfection of fppf stacks over $S$, and highlight some properties.
As we said in the introduction, all sheaves and stacks are
considered for the fppf topology so most of the time we omit
the adjective.

\begin{noth}{Coperfection of stacks}
\label{noth:coperfection}
Let $\sX$ be a stack over $S$. We let
\[
\sX^{\copf/S}=\colim
\big(\!\!
\begin{tikzcd}[column sep=8mm]
\sX \ar[r, "\Frob_0"] & \sX^{p/S} \ar[r, "\Frob_1"] &
\sX^{p^2/S}\dots
\end{tikzcd}
\!\!\big)
\]
be the colimit in the 2-category of stacks. The inductive system
being filtered, the prestack colimit satisfies the stack property
for coverings of affine schemes $\Spec(A')\to\Spec(A)$, and its
Zariski stackification is an fppf stack, hence is the fppf
stackification. One checks the following facts:
\begin{itemize}
\item[(i)] $\sX^{\copf/S}$ is perfect and is a coperfection of
$\sX$ in the 2-category of $S$-stacks;
\item[(ii)] the formation of $\sX^{\copf/S}$ commutes with all
base changes $S'\to S$ and is functorial in $\sX$;
\item[(iii)] $\sX^{\copf/S}$ is locally of finite presentation
(that is, limit-preserving) if $\sX$ is;
\item[(iv)] if $\sX$ is an algebraic stack, then $\sX^{\copf}$
is far from algebraic in general. For example if~$\sX$ is the
affine line over $\FF_p$ then for an $\FF_p$-algebra $A$, the set
$\sX^{\copf}(A)$ is equal to $A^{\copf/\FF_p}$, the absolute
coperfection of $A$. In particular, for $A=\FF_p[[t]]$ the set
$\sX^{\copf}(A)=\FF_p[[t^{p^{-\infty}}]]$ is much bigger than
$\lim \sX^{\copf}(A/t^n)=\FF_p$.
\end{itemize}
\end{noth}

\begin{noth}{Perfection of stacks; $\Frob$-divided objects}\label{noth:perfection}
Let $\sM$ be a  stack over $S$.
For each $i\ge 0$ let $\Frob_{S,*}^i$ be the Weil restriction
along the $i$-th absolute Frobenius of $S$, and
\[
G_i:\Frob_{S,*}^{i+1}\sM\to \Frob_{S,*}^i\sM
\]
the morphism which maps a $T$-valued object
$x\in \sM(T^{p^{i+1}/S})=(\Frob_{S,*}^{i+1}\sM)(T)$
to the pullback
\[
G_i(x)\defeq \Frob_{T^{p^i/S}/S}^*x
\]
under the Frobenius
$\Frob=\Frob_{T^{p^i/S}/S}:T^{p^i/S}\to T^{p^{i+1}/S}$.
Then we define:
\[
\sM^{\pf/S}=\lim
\big(\!\!
\begin{tikzcd}[column sep=8mm]
\dots \ar[r] & \Frob_{S,*}^2\sM \ar[r, "G_1"] &
\Frob_{S,*}\sM \ar[r, "G_0"] & \sM
\end{tikzcd}
\!\!\big),
\]
the limit being taken in the 2-category of stacks.
One has the following facts:
\begin{itemize}
\item[(i)] $\sM^{\pf/S}$ is perfect and is a perfection of
$\sX$ in the 2-category of $S$-stacks;
\item[(ii)] the formation of $\sM^{\pf/S}$ commutes with all base
changes $S'\to S$ and is functorial in $\sM$;
\item[(iii)] $\sM^{\pf/S}$ is not locally of finite presentation
in general, even if $\sM$ is;
\item[(iv)] assume that $\Frob_S:S\to S$ is finite locally free.
If $\sM$ is a Deligne-Mumford stack, then $\sM^{\pf/S}$ also.
For schemes, this is proven in Kato~\cite{Ka86}, Prop.~1.4.
In general, one uses the fact that the diagonal being unramified,
its relative Frobenius is a monomorphism, hence the transitions
$G_i:\Frob_{S,*}^{i+1}\sM\to \Frob_{S,*}^i\sM$ are representable
affine morphisms. For Artin stacks, the same argument proves that
the diagonal of $\sM^{\pf/S}$ is representable by algebraic spaces,
but in general it is not locally of finite type and $\sM^{\pf/S}$
is not algebraic. For instance, in the case of $\sM=B\GG_m$ over
$S=\Spec(\FF_p)$ the diagonal is a torsor under
$\mu_{p^\infty}=\lim \mu_{p^i}$. Finally if $\Frob_S$ is not finite
locally free, then already the diagonal may fail to be representable;
\item[(v)] If $\sM'\to \sM$ is perfect, the natural morphism
$\sM'^{\pf/S}\to \sM^{\pf/S}\times_\sM\sM'$ is an isomorphism
of stacks.
\end{itemize}

\begin{rema} \label{rema:coperf determines perf}
For arbitrary $S$-stacks $\sX$ and $\sM$, we have canonical
isomorphisms:
\[
\sHom(\sX,\sM^{\pf})=\sHom(\sX^{\copf},\sM^{\pf})=
\sHom(\sX^{\copf},\sM).
\]
This equality is what explains the dual interpretation of our
result embodied by Theorems~A and A' in the introduction. Indeed,
assume we have a satisfactory understanding of the above object as
a bifunctor in~$\sX$ and $\sM$. Then letting $\sX$ vary we obtain a
description of the perfection of $\sM$, while letting $\sM$ vary we
obtain a description of the coperfection of $\sX$.
Going still further, since $T^{\copf}=\colim T^{p^i/S}$ we have
\begin{align*}
\sM^{\pf}(T) & = \sHom(\colim T^{p^i/S},\sM)
=\lim \sHom(T^{p^i/S},\sM) \\
& =\lim \sHom(T,F^i_{S,*}\sM)
=\sHom(T,\lim F_{S,*}^i\sM)=(\lim F_{S,*}^i\sM)(T).
\end{align*}
This shows that once we know coperfection in the 2-category
of stacks, the construction of the perfection is forced upon us.
\end{rema}

The points of the stack $\sM^{\pf/S}$ are exactly the
$\Frob$-divided objects of $\sM$. We want to give the latter an
existence of their own, independent of the adjointness property.

\begin{defi}
We denote by $\Fdiv_S(\sM)$ the stack described as follows.
\begin{trivlist}
\itemn{1} An {\em $\Frob$-divided object of $\sM$} over an
$S$-scheme $T$ is a collection of pairs
$(x_i,\sigma_i)_{i\ge 0}$ where $x_i\in\sM(T^{p^i/S})$ and
$\sigma_i:x_i\to \Frob^*x_{i+1}$ is an isomorphism; here
$\Frob=\Frob_{T^{p^i/S}/S}:T^{p^i/S}\to T^{p^{i+1}/S}$
is Frobenius.
\itemn{2} A morphism between $(x_i,\sigma_i)_{i\ge 0}$ and
$(y_i,\tau_i)_{i\ge 0}$ is a collection of morphisms $u_i:x_i\to y_i$
such that $\tau_i\circ u_i=\Frob^*u_{i+1}\circ\sigma_i$ for all
$i\ge 0$.
\end{trivlist}
\end{defi}

To make things clear: $\Fdiv_S(\sM)$ and $\sM^{\pf/S}$ are really
two names for the same object.

\begin{rema} \label{rema:warning on notation}
In most of the existing literature, e.g. \cite{DS07}, \cite{TZ17},
the notation $\Fdiv(\cZ)$ is used for the category of $\Frob$-divided
vector bundles on $\cZ$. In Tonini and Zhang \cite{TZ17}, Def.~6.20,
the notation is
extended to the effect that $\Fdiv(\cZ,\cY)$ denotes the category
of $\Frob$-divided objects of a stack $\cY$ over the base $\cZ$. In the
present paper, our emphasis is on the stack where divided objects
take their values rather than the base that supports them. We are
therefore led to drop~$\cZ$ from
the notation, so that our $\Fdiv(\sM)$ is Tonini and Zhang's
$\Fdiv(-,\sM)$. We warn the reader that as a result, the notation
$\Fdiv(\sM)$ does not have the same meaning in both works.
Writing $\Vect$ for the stack of vector bundles, the
following table gives a summary of the correspondence of notations.
\smallskip
\begin{center}
\setlength\arraycolsep{.5mm}
\renewcommand{\arraystretch}{1.3}
\begin{tabular}{|c|c|}
\hline
Our notation & Notation in \cite{TZ17} \\
\hline
$\Fdiv(\sM)$ & $\Fdiv(-,\sM)$ \\
$\sM^{\pf/S}$ & $\Fdiv(-,\sM)$ \\
$\Fdiv(\Vect)(T)$ & $\Fdiv(T)$ \\
$\sX^{\copf/S}$ & $\sX^{(\infty,S)}$ \\
\hline
\end{tabular}
\end{center}
\end{rema}
\end{noth}

\bigskip

We end this subsection with a lemma which is a consequence
of the fact that the diagonal of the perfection is the
perfection of the diagonal. This will be useful
later in Section~\ref{section:coperfection_spaces_stacks}.

\begin{lemma}\label{lemma:isoms-Fdiv}
Let $S$ be an algebraic space of characteristic $p$ and
$\sY\to S$ a perfect stack. Let $\sM$ be a stack
and $f\colon \Fdiv_S(\sM)\to \sM$ the perfection morphism. Let
$x,y\colon \sY \to \Fdiv_S(\sM)$ be two morphisms, and write
$x_0,y_0\colon \sY\to \sM$ for the compositions $fx$, $fy$.
Then there is an isomorphism of sheaves on $S$
\[
\underline\Hom\left(x,y\right)\isomto
\Fdiv_S(\underline\Hom\left(x_0,y_0\right))
\]
identifying the morphism
\[
\underline\Hom(x,y)\too \underline\Hom(x_0,y_0)
\]
with the $S$-perfection morphism.
\end{lemma}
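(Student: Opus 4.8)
The plan is to unwind both sides of the claimed isomorphism in terms of the explicit description of $\Frob$-divided objects, and to reduce everything to the fact --- already used in \ref{noth:perfection}(iv) and (v) --- that the diagonal of a perfection is the perfection of the diagonal. Concretely, write $x=(x_i,\sigma_i)_{i\ge 0}$ and $y=(y_i,\tau_i)_{i\ge 0}$ as $\Frob$-divided objects of $\sM$ over $\sY$, so that $x_i,y_i\colon \sY^{p^i/S}\to\sM$ (using that $\sY$ is perfect, so $\sY\simeq\sY^{p^i/S}$, these all live over $\sY$, and $x_0,y_0$ are as in the statement). First I would observe that, by the definition of morphisms in $\Fdiv_S(\sM)$, a $T$-point of $\underline\Hom(x,y)$ over $S$ is a compatible system $(u_i)_{i\ge 0}$ with $u_i\in\underline\Hom(x_i,y_i)(T^{p^i/S})$ and $\tau_i\circ u_i=\Frob^*u_{i+1}\circ\sigma_i$. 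This is exactly the data of an $\Frob$-divided object of the sheaf $\underline\Hom(x_0,y_0)$, once we identify $\underline\Hom(x_i,y_i)$ with $\big(\underline\Hom(x_0,y_0)\big)^{p^i/S}$, i.e. with the $i$-th Frobenius twist of $\underline\Hom(x_0,y_0)$: this identification is precisely the statement that forming $\underline\Hom$ commutes with Frobenius twist, which holds because Frobenius twist is a base change $-\times_{S,\Frob_S}S$ and $\underline\Hom$ of a pair of maps into a stack commutes with base change on $S$. The isomorphisms $\sigma_i,\tau_i$ are what translate the various $u_i$ into sections of a single twisted sheaf, so that the cocycle condition becomes the $\Frob$-divided compatibility.

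Next I would check that this bijection on $T$-points is natural in $T$ and compatible with restriction maps, hence an isomorphism of sheaves on $S$, and that it is the identity on the ``$i=0$ component'', which gives the compatibility with the forgetful maps to $\underline\Hom(x_0,y_0)$ asserted in the last sentence. Finally, to see that the resulting map $\underline\Hom(x,y)\to\underline\Hom(x_0,y_0)$ is the $S$-perfection morphism, I would invoke Remark~\ref{rema:coperf determines perf} together with \ref{noth:perfection}: the target $\sM^{\pf/S}=\Fdiv_S(\sM)$ has diagonal equal to the limit $\lim_i \Frob_{S,*}^i\big(\underline\Hom\text{-sheaf}\big)$, which is by definition $\Fdiv_S$ of the diagonal sheaf; unwinding this for the two maps $x,y$ recovers exactly $\Fdiv_S(\underline\Hom(x_0,y_0))$ with its perfection morphism to $\underline\Hom(x_0,y_0)$. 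Alternatively one applies \ref{noth:perfection}(v) to the perfect morphism (the diagonal of $\Fdiv_S(\sM)$ over $\sM\times_S\sM$ pulled back along $(x_0,y_0)$), which formally gives $\big(\underline\Hom(x_0,y_0)\big)^{\pf/S}\simeq \underline\Hom(x,y)$.

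The main obstacle I anticipate is bookkeeping rather than conceptual: one must be careful about \emph{which} Frobenius twists and Weil restrictions are in play, since $\sY$ is perfect (so its own twists collapse) while $\sM$ is not, and the sheaf $\underline\Hom(x_i,y_i)$ naturally lives over $\sY^{p^i/S}$, which must be transported down to $S$ via the perfection isomorphism of $\sY$. Keeping the indices aligned so that the transition maps $G_i$ of \ref{noth:perfection} on the $\Hom$-side match the cocycle relations $\tau_i\circ u_i=\Frob^* u_{i+1}\circ\sigma_i$ is the only delicate point; once the dictionary is set up correctly the isomorphism is immediate, and its identification with the perfection morphism is formal from Remark~\ref{rema:coperf determines perf}.
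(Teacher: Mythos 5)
Your proposal is correct in substance, but your primary route is genuinely different from the paper's. The paper's proof is purely formal: since $\Fdiv_S(\sM)=\sM^{\pf/S}$ is defined as a limit, $\Fdiv_S$ commutes with fibred products and identifies $\Delta_{\Fdiv_S(\sM)}$ with $\Fdiv_S(\Delta_{\sM})$; one then writes $\underline\Hom(x_0,y_0)$ as the $2$-fibred product of $\sY\xrightarrow{(x_0,y_0)}\sM\times_S\sM\xleftarrow{\Delta}\sM$, applies $\Fdiv_S$ to that square, and uses $\Fdiv_S(\sY)\isomto\sY$ to recognize the result as the square computing $\underline\Hom(x,y)$. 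This is essentially the ``alternative'' you mention only in passing at the end, and it is the cleaner argument: it produces the isomorphism and its compatibility with the perfection morphism in one stroke, with no index bookkeeping. Your main argument by unwinding $T$-points also works, but with one caveat: identifying $\underline\Hom(x_i,y_i)$ with the $i$-th \emph{Frobenius twist} $\bigl(\underline\Hom(x_0,y_0)\bigr)^{p^i/S}$ ``because $\underline\Hom$ commutes with base change'' is not quite the right statement --- the limit defining $\Fdiv_S$ in \ref{noth:perfection} runs over the Weil restrictions $\Frob_{S,*}^i$, not the twists, and relating $x_i$ to $x_0$ genuinely uses the invertibility of $\Frob^i_{\sY/S}$ (perfectness of $\sY$) to transport $\underline\Hom(x_i,y_i)$, which lives over $\sY^{p^i/S}$, back to $\sY$. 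You correctly flag this alignment of indices as the delicate point, and it does close, but the fibred-product argument avoids it entirely; I would promote your ``alternative'' to the main proof.
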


\begin{proof}
As $\sM^{\pf/S}=\Fdiv_S(\sM)$ is defined as a limit, the formation
of $\Fdiv_S$ commutes with pro\-ducts, and the natural equivalence
$\Fdiv_S(\sM)\times_S\Fdiv_S(\sM)\isomto \Fdiv_S(\sM\times_S\sM)$
identifies the diagonal $\Delta_{\Fdiv_S(\sM)}$ with
$\Fdiv_S(\Delta_{\sM})$.
We have a 2-cartesian diagram  of stacks on $S$:
\[
\begin{tikzcd}
\underline\Hom\left(x,y\right) \ar[r]\ar[d] &
\sY \ar[d, "{(x,y)}"] \\
\Fdiv_S(\sM) \ar[r,"\Delta"] & \Fdiv_S(\sM)\times_S\Fdiv_S(\sM).
\end{tikzcd}
\]
Because $\sY\to S$ is perfect, the morphism $\Fdiv_S(\sY)\to \sY$
is an isomorphism of stacks. Applying $\Fdiv_S$ to the
2-cartesian diagram
\[
\begin{tikzcd}
\underline\Hom\left(x_0,y_0\right) \ar[r]\ar[d] &
\sY \ar[d, "{(x_0,y_0)}"] \\
\sM \ar[r,"\Delta"] & \sM\times_S\sM
\end{tikzcd}
\]
we obtain the desired isomorphism
$\underline\Hom\left(x,y\right)\isomto
\Fdiv_S(\underline\Hom\left(x_0,y_0\right))$.
\end{proof}

\subsection{Perfect algebraic stacks}
\label{ss:case of alg stacks}

Perfect algebraic stacks have a very simple structure.

\begin{lemma}\label{lemma:perfect_stack}
Let $\sX$ be an algebraic stack over $S$. Consider the following
conditions:
\begin{trivlist}
\itemn{1} $\sX$ is a perfect $S$-stack.
\itemn{2} There exists an \'etale, surjective morphism
$U\to \sX$ from a perfect $S$-algebraic space.
\itemn{3} $\sX$ is an \'etale gerbe over a perfect
$S$-algebraic space.
\end{trivlist}
Then we have the implications
{\rm (1)} $\iff$ {\rm (2)} $\Longleftarrow$ {\rm (3)},
and if the diagonal of $\sX\to S$ is locally of finite
presentation then all three conditions are equivalent.
In particular, all perfect algebraic stacks are Deligne-Mumford.
\end{lemma}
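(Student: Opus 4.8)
The plan is to derive everything from the formal properties of the relative Frobenius recorded in~\ref{Formal properties}, together with the fact (recalled in the introduction) that étale morphisms are perfect. Two of the implications are immediate, as they only produce perfect objects out of étale‑plus‑perfect input. For $(3)\Rightarrow(1)$: if $\sX$ is an étale gerbe over a perfect $S$-algebraic space $X$, then $\sX\to X$ is étale, hence a perfect morphism, and composing with the perfect morphism $X\to S$ gives that $\sX\to S$ is perfect by~\ref{Formal properties}(ii). For $(2)\Rightarrow(1)$: if $U\to\sX$ is étale surjective with $U$ perfect over $S$, then $U\to\sX$ is étale, hence perfect, and it is faithfully flat; since being an isomorphism of stacks is fppf-local on the target, the Frobenius twist $U^{p/S}\to\sX^{p/S}$ descends isomorphisms, so \ref{Formal properties}(iv) (applied with $(X,Y,Z)=(U,\sX,S)$) yields that $\sX\to S$ is perfect.

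For $(1)\Rightarrow(2)$, the substantial point, suppose $\sX\to S$ is perfect. Since the Frobenius twist commutes with fibre products over $S$, the stack $\sX\times_S\sX$ is again perfect over $S$; hence by~\ref{Formal properties}(iii) the relative diagonal $\Delta\colon\sX\to\sX\times_S\sX$, being an $S$-morphism between $S$-stacks perfect over $S$, is a perfect morphism. Now any perfect morphism $f\colon Y\to Z$ has $\Omega_{Y/Z}=0$: the canonical map $\Frob_{Y/Z}^{*}\Omega_{Y^{p/Z}/Z}\to\Omega_{Y/Z}$ is zero (relative Frobenius kills differentials, since $d(a^{p})=0$) while being an isomorphism (as $\Frob_{Y/Z}$ is), so its target vanishes. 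Thus $\Delta$ is formally unramified; since the diagonal of $\sX\to S$ is locally of finite presentation, $\Delta$ is unramified, i.e. $\sX$ is Deligne--Mumford, and so admits an étale atlas $U\to\sX$ with $U$ a scheme, in particular an algebraic space. Finally $U\to S$ is the composite of the étale (hence perfect) morphism $U\to\sX$ with the perfect morphism $\sX\to S$, hence perfect by~\ref{Formal properties}(ii); this is $(2)$, and it also proves the last assertion, that every perfect algebraic stack is Deligne--Mumford.

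It remains, under the same hypothesis, to upgrade $(1)$ to $(3)$. A perfect $\sX$ is Deligne--Mumford by the above, and its inertia $I_{\sX/S}$ is a base change of the perfect morphism $\Delta$, hence perfect, hence (being moreover unramified) an étale group algebraic space. I would then set $X\defeq\sX^{\sh}$, the fppf sheafification: the morphism $\sX\to X$ is a gerbe, and it is étale because its inertia $I_{\sX/S}$ is étale (flat, locally of finite presentation, zero cotangent complex in degree $0$). The composite $U\to\sX\to X$ is then an étale surjective atlas of $X$ by an algebraic space perfect over $S$, so $X$ is an algebraic space, and, arguing exactly as in $(2)\Rightarrow(1)$, $X$ is perfect over $S$; thus $\sX$ is an étale gerbe over a perfect $S$-algebraic space.

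The step I expect to be the main obstacle is precisely the one inside $(1)\Rightarrow(2)$: passing from ``$\Delta$ is a perfect morphism'' to ``$\sX$ admits an étale atlas''. Perfectness of $\Delta$ only yields formal unramifiedness, and that this is enough rests on the finite‑presentation hypothesis on the diagonal; this hypothesis cannot be dispensed with, as is illustrated by $B\underline{\hat{\ZZ}}$ over $\FF_p$, which is a perfect stack but is not Deligne--Mumford.
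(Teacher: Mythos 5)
Your overall architecture matches the paper's: (3)$\Rightarrow$(1) and (2)$\Rightarrow$(1) via the formal properties of Frobenius, (1)$\Rightarrow$(2) by showing the diagonal is formally unramified and deducing Deligne--Mumfordness, and (1)$\Rightarrow$(3) by rigidifying along the \'etale inertia. The argument that a perfect morphism kills $\Omega$ and is hence formally unramified, and the use of \ref{Formal properties}(iv) for (2)$\Rightarrow$(1), are exactly the paper's steps.

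There is, however, one genuine defect. In your proof of (1)$\Rightarrow$(2) you pass from ``$\Delta$ is formally unramified'' to ``$\Delta$ is unramified'' by invoking the hypothesis that the diagonal is locally of finite presentation. But the lemma asserts (1)$\iff$(2) \emph{unconditionally}, and the final sentence (``all perfect algebraic stacks are Deligne--Mumford'') is likewise unconditional; the finite-presentation hypothesis enters only for (1)$\Rightarrow$(3). The paper's route is to observe that the diagonal of \emph{any} algebraic stack is automatically locally of finite type (\cite{SP19}, Tag~04XS), and a formally unramified morphism which is locally of finite type is unramified in the sense needed for the Deligne--Mumford criterion (Tag~06N3); no finite presentation is required. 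Your concluding paragraph is therefore wrong in claiming that the hypothesis ``cannot be dispensed with'' for this step, and the proposed witness $B\underline{\hat{\ZZ}}$ over $\FF_p$ does not serve: $\hat{\ZZ}$ is not locally of finite presentation over $\FF_p$, so $B\underline{\hat{\ZZ}}$ is not an algebraic stack and falls outside the scope of the lemma. (The place where the extra hypothesis genuinely matters is (1)$\Rightarrow$(3): one needs formally \'etale plus locally of finite presentation to get that the diagonal, hence the inertia, is \'etale; the paper's example $[X^{\pf/k}/G]$ shows (3) can fail for a perfect algebraic stack.) With the finite-type observation substituted in, the rest of your argument goes through.
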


To obtain an example of a perfect algebraic stack that does
not satisfy (3), take a positive-dimensional scheme $X$ over
a perfect field $k$ with a non-free action of a finite group $G$,
and let $\sX=[X^{\pf/k}/G]$.

\begin{proof}
We use the facts collected in~\ref{Formal properties} without explicit
mention.

\smallskip

\noindent (1) $\Rightarrow$ (2) If $\sX\to S$ is perfect,
then so is
$\sX\times_S\sX\to S$ and hence also the diagonal
$\Delta:\sX\to\sX\times_S\sX$. In particular $\Delta$ is
formally unramified. Being locally of finite type
(\cite{SP19}, Tag~\href{http://stacks.math.columbia.edu/tag/04XS}{04XS}), it is unramified in the sense of \cite{Ra70}
and \cite{SP19}. It follows that $\sX$ is Deligne-Mumford
(\cite{SP19}, Tag~\href{http://stacks.math.columbia.edu/tag/06N3}{06N3}). Let $U\to \sX$ be an \'etale surjective morphism
from an algebraic space; then $U\to \sX$ is perfect and it
follows that $U\to S$ is perfect.

\smallskip

\noindent (2) $\Rightarrow$ (1) By~\ref{Formal properties},
if $U$ is perfect and $U\to \sX$ is \'etale surjective
then $\sX$ is perfect.

\smallskip

\noindent (3) $\Rightarrow$ (1) This is clear because an \'etale
gerbe is perfect.

\smallskip

\noindent (1) $\Rightarrow$ (3) If $\Delta$ is locally
of finite presentation, it is
formally \'etale hence \'etale. It follows that the inertia
stack $I_{\sX}\to\sX$ is \'etale and therefore there is an
algebraic space $X$ and an \'etale gerbe morphism
$\sX\to X$, see
\cite{SP19}, Tag~\href{http://stacks.math.columbia.edu/tag/06QJ}{06QJ}.
\end{proof}

\section{\'Etale hulls and connected components}
\label{section:complements_pi0}

In this section, we provide some complements on the functor
$\pi_0$ introduced in~\cite{Rom11}. Although these results hold
for algebraic stacks, we restrict most of the time to algebraic
spaces because this simplifies the treatment a little and is
enough for our needs. There are two viewpoints
on the functor $\pi_0$, and we consider both.

Firstly $\pi_0$ is a left adjoint to the inclusion of the
category of \'etale quasi-compact spaces in the category
of flat, finitely presented, separable spaces. In the study of
such ``\'etalification'' functors, Ferrand~\cite{Fe19} recently
highlighted the
importance of the category of factorizations $X\to E\to S$ where
the second arrow is \'etale. He proved that when the base $S$
has finitely many irreducible components, there is a left
adjoint~$\pi^s$ to the inclusion
of \'etale, separated spaces into all flat, finitely presented
spaces. In \S~\ref{subsection:etale affine hulls}
we prove that the category of factorizations as well as
some interesting subcategories satisfy topological invariance
(in the sense of \cite{SGA4.2}, Exp.~VIII, Th.~1.1). Then we
prove that when~$S$ is noetherian, geometrically unibranch and
without embedded points, there is a left adjoint~$\pi^a$ to the
inclusion of \'etale, affine spaces into all flat,
finitely presented spaces. In \S~\ref{subsection_global_section}
we compare~$\pi^a$ with the affine hull of $\pi_0$.

Secondly $\pi_0$ is the functor of connected components of a
relative space.
In \S~\ref{subsection:computing pi0} we describe ways to compute
$\pi_0(X/S)$ by using an atlas of $X$, or completing along a
closed fibre of $X\to S$.

We sometimes impose some
finiteness or regularity assumptions on the base $S$, but
nothing on the characteristics; it is only in later sections
that we specialize to characteristic $p$.

\subsection{\'Etale affine hulls and largest \'etale subalgebras}
\label{subsection:etale affine hulls}

Let us briefly recall what is know on \'etale hulls, also
called \'etalification functors. Consider the following diagram
of fully faithful subcategories of the category of $S$-spaces
(``fp'' stands for finitely presented):
\[
\begin{tikzcd}[column sep=30, row sep=10]
\textbf{EtAff}_S \ar[r,] &
\textbf{EtSep}_S \ar[r] &
\textbf{Et}_S \ar[r] &
\textbf{Spb}_S \ar[r] \ar[l,bend right=20,"\pi_0"',pos=0.4] &
\textbf{Flat}_S \ar[lll,bend right=37,"\pi^s"',pos=0.7]
\ar[ll,bend right=36,"\exists?\pi"',pos=0.7]
\ar[llll,bend right=38,"{\pi^a \mbox{ \footnotesize ($S$ unibranch)}}"',pos=0.8] \\
\mbox{\begin{tabular}{c} \'etale \\ affine \end{tabular}} &
\mbox{\begin{tabular}{c} \'etale fp \\ separated \end{tabular}} &
\mbox{\begin{tabular}{c} \'etale fp \\ \quad \end{tabular}} &
\mbox{\begin{tabular}{c} flat fp \\ separable \end{tabular}} &
\mbox{\begin{tabular}{c} flat fp \\ \quad \end{tabular}}
\end{tikzcd}
\]
Here are some positive facts on the existence of these adjoints:
\begin{trivlist}
\itemn{i} $\pi_0$ is constructed in~\cite{Rom11}. It has
a moduli description in terms of connected components. When $X\to S$
is flat, finitely presented, the functor $\pi_0(X/S)$ is
representable by an algebraic space when either $X$ is separable,
or $S$ is zero-dimensional, see \cite{Rom11}, 2.1.3.
Its main properties (representability, adjointness,
commutation with base change) hold with no assumption on~$S$.
The morphism $X\to\pi_0(X/S)$ is surjective with
connected geometric fibres.
\itemn{ii} $\pi^s$ is constructed in~\cite{Fe19} when $S$
has finitely many irreducible components, and is not known to
exist otherwise. It has no known moduli
description. It has functoriality and base change properties
available only in restricted cases. The morphism $X\to\pi^s(X/S)$
is surjective but its geometric fibres are usually not connected.
\itemn{iii} $\pi^a$ is constructed is the present subsection
when $S$ is noetherian, geometrically unibranch, without embedded
points. It shares the same features as those just listed for $\pi^s$,
except that $X\to\pi^a(X/S)$ is schematically dominant but maybe not
surjective.
\end{trivlist}
Here are some negative facts:
\begin{trivlist}
\itemn{iv} $\pi$ is not known to exist unless $S$ is zero-dimensional
(in which case $\pi=\pi_0$).
\itemn{v} $\pi_0$ {\em does} extend naturally to a functor
$\textbf{Flat}_S\to \textbf{Et}_S$ but this is not a left adjoint
to the inclusion $i:\textbf{Et}_S\to \textbf{Flat}_S$. Indeed
\cite{Rom11}, 2.1.3 implies that for all flat,
finitely presented $X\to S$ the functor $\pi_0(X/S)$ defined as an
\'etale sheaf is constructible, hence an \'etale quasi-compact algebraic
space. Moreover, for each \'etale $E\to S$ there is a map
$\Hom(X,E)\to \Hom(\pi_0(X/S),E)$. However, in general there is no
map in the other direction; in particular there is no morphism
$X\to\pi_0(X/S)$ and this prevents $\pi_0$ from being an adjoint
of $i$. For instance, let $S$ be the spectrum of a discrete
valuation ring $R$ with fraction field $K$ and let
$X=\Spec(R[x]/(x^2-\pi x))$. Then $\pi_0(X/S)\simeq \Spec(K)\sqcup\Spec(K)$
and the map $\pi_0(X/S)\to S$ is not even surjective.
\end{trivlist}


\bigskip

We now start our investigations on $\pi^a$. To start with,
we recall the definition of the category of factorizations from
\cite{Fe19}. In order to make Theorem~\ref{theo_affine_etale_hull}
possible, we modify the definition slightly by
relaxing the assumption of surjectivity.

\begin{defi} \label{defi:cats_of_factorizations}
Let $X\to S$ be a morphism of algebraic spaces. The
{\em category of factorizations} is the category $\sfE(X/S)$ whose
objects are the factorizations $X\to E\to S$ such that $E\to S$ is
\'etale, and whose morphisms are the commutative diagrams:
\[
\begin{tikzcd}[row sep=0mm]
& E_1 \ar[rd] \ar[dd] & \\
X \ar[ru] \ar[rd] & & S . \\
& E_2 \ar[ru] &
\end{tikzcd}
\]
The category $\sfE^{\surj}(X/S)$, resp. $\sfE^{\dom}(X/S)$ is
the full subcategory of factorizations such that $X\to E$ is
surjective, resp. schematically dominant. The category
$\sfE^{\sep}(X/S)$, resp. $\sfE^{\aff}(X/S)$ is the full
subcategory of factorizations such that $E\to S$ is separated,
resp. affine. We write
$\sfE^{\aff,\dom}(X/S)=\sfE^{\aff}(X/S)\cap \sfE^{\dom}(X/S)$
and similarly for other intersections.
\end{defi}

We will often denote a factorization $X\to E\to S$ simply by
using the letter $E$. We draw the attention of the reader to
the fact that for the subcategories $\sfE^\sharp(X/S)$ defined
above, the property~``$\sharp$''
applies either to $E\to S$ or to $X\to S$, depending on the case.

\begin{lemma} \label{lemma:universal_homeo_descent}
Let $X\to S$ be a morphism of algebraic spaces $X\to S$.
Let $f:S'\to S$ be a morphism of spaces which
is integral, radicial and surjective. Let $X'=X\times_S S'$.
\begin{trivlist}
\itemn{1} The pullback functor $f^*:\sfE(X/S)\to \sfE(X'/S')$
is an equivalence which preserves the subcategories
$\sfE^{\sep}$, $\sfE^{\aff}$ and $\sfE^{\surj}$.
\itemn{2} If moreover $S,S'$ are locally noetherian,
$f$ induces a bijection $\Emb(S')\to\Emb(S)$ of embedded
points, and $X\to S$ is faithfully flat, then $f^*$ preserves
also the subcategory $\sfE^{\dom}$.
\end{trivlist}
\end{lemma}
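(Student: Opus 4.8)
The plan is to reduce everything to topological invariance of the small étale site. Since $f$ is integral, radicial and surjective it is a universal homeomorphism, and so is its base change $\bar f\colon X'\to X$. By \cite{SGA4.2}, Exp.~VIII, Th.~1.1 (which extends to algebraic spaces) the base-change functors $\Et_S\to\Et_{S'}$ and $\Et_X\to\Et_{X'}$ are then equivalences of categories, compatibly with the base-change functors $\Et_S\to\Et_X$ and $\Et_{S'}\to\Et_{X'}$ because $(X\times_S E)\times_X X'\simeq X'\times_{S'}(E\times_S S')$. The key observation I would make is that an object of $\sfE(X/S)$ is the same datum as an object $E\in\Et_S$ together with a section of the étale $X$-space $X\times_S E\to X$ (namely the graph of $X\to E$), with morphisms corresponding accordingly, and likewise for $\sfE(X'/S')$. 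Since an equivalence of categories preserves the terminal object and all morphism sets, it carries sections of $X\times_S E\to X$ bijectively, and naturally in $E$, to sections of $X'\times_{S'}(E\times_S S')\to X'$; feeding this into $\Et_S\simeq\Et_{S'}$ shows that $f^*\colon\sfE(X/S)\to\sfE(X'/S')$ is an equivalence.

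For the subcategories in (1) I would argue topologically. Write $E'=E\times_S S'$ and let $\phi'\colon X'\to E'$ be the pullback of $\phi\colon X\to E$: there is a commutative square with verticals $\phi,\phi'$ and horizontals the universal homeomorphisms $X'\to X$ and $E'\to E$, so $|\phi|$ is identified with $|\phi'|$ under $|X'|\simeq|X|$, $|E'|\simeq|E|$, whence $\phi$ is surjective iff $\phi'$ is; this settles $\sfE^{\surj}$. For $\sfE^{\sep}$: as $E\to S$ is étale, hence unramified, $\Delta_{E/S}$ is an open immersion, so $E\to S$ is separated precisely when the image of $\Delta_{E/S}$ is closed in $E\times_S E$, and since $\Delta_{E'/S'}$ is the pullback of $\Delta_{E/S}$ along the universal homeomorphism $(E\times_S E)\times_S S'\to E\times_S E$, the two closedness conditions are equivalent. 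For $\sfE^{\aff}$: base change preserves affineness, and conversely affineness descends along the surjective integral morphism $f$---a standard descent fact; note that $E'\to S'$ affine already forces $E\to S$ separated by the previous point.

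For (2) the plan is to give a criterion for schematic dominance depending only on the underlying space and the associated points. Since $X\to S$ is flat and $E\to S$ is étale, $\phi\colon X\to E$ is flat (it factors as the graph $X\to X\times_S E$, a pullback of the open immersion $\Delta_{E/S}$, followed by the flat projection $X\times_S E\to E$) and quasi-compact; and $E$, being étale over the locally noetherian $S$, is locally noetherian. I claim such a $\phi$ is schematically dominant if and only if $\Ass(\cO_E)\subseteq\phi(X)$: the quasi-coherent ideal $\cI=\ker(\cO_E\to\phi_*\cO_X)$ has vanishing stalk at every point of the image $\phi(X)$ by flatness, so if $\cI\ne 0$ then one of its associated points---necessarily in $\Ass(\cO_E)$---lies outside $\phi(X)$; conversely, given $\eta\in\Ass(\cO_E)\setminus\phi(X)$, the image $\phi(X)$ is stable under generization, hence misses $\overline{\{\eta\}}$, so any local section $a$ of $\cO_E$ near $\eta$ whose annihilator is the prime of $\eta$ is supported on $\overline{\{\eta\}}$ and therefore defines a nonzero section of $\cI$. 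Now $E\to S$ étale gives $\Ass(\cO_E)=(E\to S)^{-1}\Ass(\cO_S)$, and similarly over $S'$; the homeomorphism $|S'|\simeq|S|$ matches the generic points of the irreducible components, and by hypothesis it matches $\Emb(S')$ with $\Emb(S)$, so it matches $\Ass(\cO_{S'})$ with $\Ass(\cO_S)$. Transporting this through the compatible homeomorphisms $|E'|\simeq|E|$ and $|X'|\simeq|X|$ gives $\Ass(\cO_E)\subseteq\phi(X)\iff\Ass(\cO_{E'})\subseteq\phi'(X')$, so $\phi$ is schematically dominant iff $\phi'$ is. Essential surjectivity onto $\sfE^{\dom}(X'/S')$ then follows by applying the criterion to the $\sfE$-descent of any given object, and full faithfulness is inherited from $\sfE$.

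The step I expect to be the main obstacle is this associated-points criterion for schematic dominance and its behaviour under $f$: pinning down $\Ass$, $\Emb$ and the local computation correctly for algebraic spaces (working on étale charts), and in particular checking the ``only if'' direction carefully, since that is exactly where the hypothesis on embedded points is consumed. The one other genuinely non-formal ingredient is the descent of affineness along the universal homeomorphism $f$ used for $\sfE^{\aff}$; everything else is formal from topological invariance of the étale site together with the identification $|\phi|=|\phi'|$.
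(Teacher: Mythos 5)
Your proof is correct. For the preservation of the subcategories in (1) and for all of part (2) it runs along the same lines as the paper's argument: surjectivity and separatedness are checked topologically through the universal homeomorphisms, affineness descends along the integral surjective $f$ (the paper cites \cite{SP19}, Tag~07VW for exactly this), and in (2) schematic dominance of the automatically flat map $X\to E$ is tested on $\Ass(E)$, which equals the preimage of $\Ass(S)$ since $E\to S$ is étale (the paper quotes \cite{EGA}~IV.3.3.1 here), so that the hypothesis on embedded points is consumed precisely where you say it is; your explicit verification of the associated-points criterion, including the generization argument for the ``only if'' direction, is detail the paper asserts without proof, and it is sound. Where you genuinely diverge is in establishing that $f^*:\sfE(X/S)\to\sfE(X'/S')$ is an equivalence. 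The paper produces $E$ from $E'$ by topological invariance over $S$ and then descends the morphism $u':X'\to E'$ to $u:X\to E$ via descent of morphisms into étale schemes along universal submersions (\cite{SGA1}, Exp.~IX, Prop.~3.1 and 3.2), checking $\pr_1^*u'=\pr_2^*u'$ by pulling back along the diagonal of $S'\times_SS'$, and leaves full faithfulness to the reader. You instead encode a factorization as an object $E$ of $\Et_S$ together with a section of the étale $X$-space $X\times_SE\to X$, and apply topological invariance a second time over $X$ (using that $X'\to X$ is again integral, radicial and surjective, and that the two equivalences are compatible). This buys you full faithfulness for free from the preservation of Hom-sets and avoids the SGA~1 descent input entirely, at the modest cost of checking the compatibility $(X\times_SE)\times_XX'\simeq X'\times_{S'}(E\times_SS')$, which you do. Both routes ultimately rest on the same invariance theorem (\cite{SP19}, Tag~05ZG for algebraic spaces), so the difference is one of packaging rather than substance, but yours is the more self-contained of the two.
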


\begin{proof}
First, we recall basic facts on the topological invariance of
the \'etale site. Let $f:S'\to S$ be a morphism of algebraic spaces
which is integral, radicial and surjective. Then the pullback functor
$f^*$ induces an equivalence between the category of \'etale
$S$-spaces and the category of \'etale $S'$-spaces: see \cite{SGA4.2},
Exp.~VIII for schemes and
\cite{SP19}, Tag~\href{http://stacks.math.columbia.edu/tag/05ZG}{05ZG}
for spaces.
This equivalence preserves affine objects, see \cite{SP19},
Tag~\href{http://stacks.math.columbia.edu/tag/07VW}{07VW}.

\smallskip

\noindent (1) We prove that $f^*$ is essentially surjective.
Let $X'\to E'\to S'$
be a factorization. By topological invariance of the \'etale site,
there exists an essentially unique $E\to S$ such that
$E'\simeq E\times_SS'$. In order to descend $u':X'\to E'$ to a
morphism $u:X\to E$, by descent of morphisms to an \'etale
scheme along universal submersions (\cite{SGA1}, Exp.~IX, prop.~3.2)
it is enough to prove that $\pr_1^*u'=\pr_2^*u'$ where
$\pr_1,\pr_2:S'\times_SS'\to S'$ are the projections.
By \cite{SGA1}, Exp.~IX, prop.~3.1 it is enough to find a surjective
morphism $g:S'''\to S'\times_SS'$ such that the two maps agree
after base change along $g$. We can take $S'''=S'$ and $g$
the diagonal map. This proves essential surjectivity; we leave
full faithfulness to the reader.
We now prove that $f^*$ preserves the indicated subcategories.
Since the diagonal of $E\to S$ is a closed immersion if and only
if the diagonal of $E'\to S'$ is a closed immersion, we see that
$f^*$ preserves $\sfE^{\sep}(X/S)$. The fact that $f^*$
preserves $\sfE^{\aff}$ was recalled above.
Finally $f^*$ preserves $\sfE^{\surj}$ because $f$ is
a universal homeomorphism.

\smallskip

\noindent (2) Here the morphisms $X\to E$ in the
factorizations are automatically flat. Thus such a morphism is
schematically dominant if and only if its image contains the set of
associated points $\Ass(E)$. Since $\Ass(E)=\cup_{s\in\Ass(S)} E_s$
by \cite{EGA} IV.3.3.1, we see that $X\to E$ is schematically
dominant if and only if the image of $X\to E$ contains all fibres
$E_s$ with $s\in \Ass(S)$. But $f$ induces a bijection of the
non-embedded associated points since it is a homeomorphism, and a
bijection on embedded points by assumption. Hence
it is equivalent to say that the image of $X'\to E'$
contains all fibres $E'_{s'}$ with $s'\in \Ass(S')$.
\end{proof}

\begin{noth}{\bf Suprema and infima}
We say that $E_1$ and $E_2$ have a {\em supremum} if the category
of factorizations~$E$ mapping to $E_1$ and $E_2$ has a terminal
element.
We say that $E_1$ and $E_2$ have a {\em infimum} if the category
of factorizations $E$ receiving maps from~$E_1$ and~$E_2$
has an initial element. In pictures:
{\small
\[
\begin{tikzcd}[row sep=5mm,column sep=8mm]
& & &  E_1 \ar[rd] & \\
X \ar[r] & E \ar[rru,bend left=15] \ar[rrd,bend right=15] \ar[r,dotted] &
\sup(E_1,E_2)\hspace{-1cm} \ar[ru] \ar[rd] & & S \\
& & & E_2 \ar[ru] &
\end{tikzcd}
\quad\quad\quad
\begin{tikzcd}[row sep=5mm]
& E_1 \ar[rd] \ar[rrd,bend left=15] & & & \\
X \ar[ru] \ar[rd] & & \hspace{-1cm}\inf(E_1,E_2)
\ar[r,dotted] & E \ar[r] & S \\
& E_2 \ar[ru] \ar[rru,bend right=15] & & &
\end{tikzcd}
\]
}

\noindent Note that in the three categories
$\sfE^{\surj}(X/S)$, $\sfE^{\sep,\dom}(X/S)$ and
$\sfE^{\aff,\dom}(X/S)$, if there is a morphism
between $E_1$ and $E_2$ then it is unique.
In other words, these categories really are posets.
\end{noth}

\begin{coro} \label{coro:universal_homeo_descent}
Let $\sfE^\sharp(X/S)\subset \sfE(X/S)$ be any subcategory with
\[\sharp\in \{\varnothing,\sep,\aff,\surj,\dom\}.\]
Let $f:S'\to S$ be a morphism of spaces which is integral, radicial
and surjective. In case $\sharp=\dom$ assume moreover that $f$
and $X$ satisfy the assumptions
of~\ref{lemma:universal_homeo_descent}(2).
Then the following hold.
\begin{trivlist}
\itemn{1} $\sfE^\sharp(X/S)$ has
an initial element if and only if $\sfE^\sharp(X'/S')$ has one.
\itemn{2}
Let $E_1,E_2$ be factorizations in $\sfE^\sharp(X/S)$ and $E'_1,E'_2$
their images in $\sfE^\sharp(X'/S')$. Then $E_1,E_2$ have a supremum,
resp. an infimum, if and only if $E'_1,E'_2$ have a supremum,
resp. an infimum.
\end{trivlist}
\end{coro}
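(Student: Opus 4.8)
The plan is to reduce everything to the categorical fact that an equivalence of categories both preserves and reflects every universal construction --- in particular initial objects, terminal objects, and the suprema and infima defined above --- by showing that, under the hypotheses of the statement, $f^*$ restricts to an \emph{equivalence} $\sfE^\sharp(X/S)\isomto\sfE^\sharp(X'/S')$ for each admissible $\sharp$.

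First I would check this restriction claim. By Lemma~\ref{lemma:universal_homeo_descent}(1), the pullback $f^*:\sfE(X/S)\to\sfE(X'/S')$ is an equivalence of the ambient categories, and I would extract from its proof that each defining property is \emph{detected} by $f^*$, i.e.\ that $E\in\sfE^\sharp(X/S)$ if and only if $f^*E\in\sfE^\sharp(X'/S')$: for $\sharp=\sep$ this is because the diagonal of $E\to S$ is a closed immersion iff that of $E'\to S'$ is; for $\sharp=\aff$ because topological invariance of the \'etale site identifies affine objects on both sides; for $\sharp=\surj$ because $f$ is a universal homeomorphism, so $X\to E$ is surjective iff $X'\to E'$ is. For $\sharp=\dom$, the hypothesis that $f$ and $X$ satisfy the conditions of~\ref{lemma:universal_homeo_descent}(2) is precisely what lets me invoke part~(2) of that lemma, whose proof again yields an ``if and only if'' (the criterion being that the image of $X\to E$ contain every fibre $E_s$ with $s\in\Ass(S)$, a condition transported across $f$ by the underlying homeomorphism together with the embedded-point bijection). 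Hence $f^*$ restricts to an equivalence $\sfE^\sharp(X/S)\isomto\sfE^\sharp(X'/S')$ in every case.

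Granting this, assertion~(1) follows because an equivalence of categories both preserves and reflects initial objects. For~(2), fix $E_1,E_2$ in $\sfE^\sharp(X/S)$ with images $E_1',E_2'$; the equivalence $\sfE^\sharp(X/S)\isomto\sfE^\sharp(X'/S')$ induces (a standard consequence of its being an equivalence) an equivalence between the category of factorizations $E$ in $\sfE^\sharp(X/S)$ equipped with maps to $E_1$ and $E_2$ and the corresponding category built from $E_1',E_2'$. Since $\sup(E_1,E_2)$ is by definition a terminal object of the former category, suprema correspond in both directions; the case of infima is identical with ``terminal'' replaced by ``initial'' and ``maps to'' by ``maps from''. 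I expect the only point needing care to be the first step --- verifying, by unwinding the proof of Lemma~\ref{lemma:universal_homeo_descent}, that each of $\sep,\aff,\surj,\dom$ is reflected and not merely preserved by $f^*$, so that the restriction genuinely is an equivalence --- after which the rest is pure category theory.
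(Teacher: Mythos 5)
Your proposal is correct and follows essentially the same route as the paper, whose entire proof is the observation that suprema and infima are defined in terms of morphisms and hence preserved by the equivalences $f^*:\sfE^\sharp(X/S)\to\sfE^\sharp(X'/S')$. Your extra care in checking that each property $\sharp$ is reflected (not merely preserved) by $f^*$, so that the restricted functor really is an equivalence, is a worthwhile point that the paper leaves implicit in Lemma~\ref{lemma:universal_homeo_descent}.
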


\begin{proof}
Suprema and infima are defined in terms of morphisms and are
therefore preserved by the equivalences
$f^*:\sfE^\sharp(X/S)\to \sfE^\sharp(X'/S')$.
\end{proof}

We arrive at the main existence result of this subsection.
We prepare the proof with two lemmas. The first is classical;
the proof given here was suggested to us by Daniel Ferrand.

\begin{lemma} \label{lemma:splitting_etale_separated}
Let $E\to S$ be an \'etale, quasi-compact, separated morphism of
schemes. Then after an \'etale surjective base change $S'\to S$,
the $S$-scheme $E$ is a disjoint union of a finite number of
open subschemes of $S$. If moreover $E\to S$ is surjective and
birational, it is an isomorphism.
\end{lemma}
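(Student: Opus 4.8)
The plan is to exploit the fact that an étale, quasi-compact, separated morphism of schemes is \emph{finite} over a suitable locus, combined with the local structure of étale morphisms. First I would reduce to the case where $S$ is connected (even irreducible, by a further reduction to the reduction $S_\red$, which changes nothing about the étale site of $S$ by topological invariance). Since $E\to S$ is étale and quasi-compact, it is quasi-finite; being also separated, Zariski's main theorem lets me write $E\into \overline{E}$ as an open immersion into a finite $S$-scheme $\overline{E}$. The key point is then to split $\overline{E}$ — equivalently $E$ — after base change. Over the generic point $\eta$ of $S$, the fibre $E_\eta$ is a finite étale $\kappa(\eta)$-scheme, hence a finite disjoint union of spectra of finite separable field extensions; after base change along some finite separable extension $\kappa(\eta)\to L$ (which spreads out to an étale surjective $S'\to S$ over a dense open, then is extended using quasi-compactness of $E\to S$ and generic flatness/spreading out to all of $S'$ — this is where I pick the étale surjective base change), the generic fibre becomes a disjoint union of copies of $\Spec L$.

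The heart of the argument is then: after this base change, each connected component of $E$ is étale over $S$, quasi-compact and separated, and maps \emph{generically isomorphically} to (an open of) $S$. I would argue that an étale, separated, quasi-compact morphism which is generically an open immersion is in fact an open immersion: étale morphisms are open, so the image is open; the diagonal $E\to E\times_S E$ is an open immersion (separatedness plus étaleness give it closed \emph{and} open), and generically the map $E\to S$ is a monomorphism, so the locus where $E\to S$ is a monomorphism is open and closed, hence all of the relevant component — an étale monomorphism of finite presentation is an open immersion (\cite{SP19}, or EGA~IV~17.9.1). Thus each component of $E$ becomes an open subscheme of $S$, and there are finitely many of them by quasi-compactness. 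This proves the first assertion.

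For the last sentence, suppose in addition $E\to S$ is surjective and birational. After the base change above, $E=\coprod_{j\in J} U_j$ with each $U_j\subseteq S$ open and $\coprod U_j\to S$ surjective; surjectivity forces $\bigcup U_j=S$, and birationality (one may assume $S$ irreducible) forces the generic point of $S$ to have exactly one preimage, so exactly one $U_j$ is dense, i.e. equals $S$ after shrinking — but then a second $U_{j'}$ would meet $U_j=S$ in a nonempty open, contradicting that distinct components are disjoint. Hence $|J|=1$ and $E\to S$ is an isomorphism after base change; since isomorphism can be checked after the faithfully flat base change $S'\to S$, the original $E\to S$ is an isomorphism.

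The main obstacle I anticipate is the bookkeeping around \emph{which} étale surjective base change to take and ensuring the splitting of the \emph{generic} fibre propagates to the whole of $E$ over all of $S'$: one has to combine spreading out the splitting over a dense open $V\subseteq S'$ with the fact that $E\to S$ is finite over $\overline E$ and quasi-compact, and then possibly repeat the argument on the (finitely many) irreducible components of $S'\setminus V$ by noetherian induction — or more cleanly, invoke the structure theorem that a quasi-compact separated étale morphism becomes, after étale surjective base change, a \emph{finite} disjoint union of open immersions, for which the clean statement is exactly what Ferrand's suggested proof streamlines. Everything else (ZMT, étale monomorphism $=$ open immersion, topological invariance) is standard and can be cited.
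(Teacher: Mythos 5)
Your overall strategy --- split the generic fibre by a finite separable extension, spread out, and then recognize each connected component as an open immersion --- founders at the step where you must produce the \'etale \emph{surjective} base change $S'\to S$ over \emph{all} of $S$. Splitting $E_\eta$ only yields a finite \'etale cover $V'\to V$ of a dense open $V\subseteq S$, and such a cover does not in general extend to an \'etale surjective morphism onto $S$ (think of a connected double cover of the smooth locus of a nodal curve, or of a cover of $\Spec\ZZ[1/p]$ ramified at $p$). The noetherian induction you propose on the components of $S\setminus V$ does not repair this: an \'etale surjective cover of the closed subscheme $S\setminus V$ is not \'etale over $S$, so the covers of $V$ and of $S\setminus V$ cannot be assembled into a single \'etale surjective $S'\to S$. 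Your fallback of ``invoking the structure theorem that a quasi-compact separated \'etale morphism becomes a finite disjoint union of open immersions after \'etale surjective base change'' is circular: that structure theorem is precisely the statement being proved. (A secondary issue: passing to $S_\red$ gives a reduced connected base, not an irreducible one, so the reduction to a single generic point also needs justification.)

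The paper sidesteps the extension problem entirely: it inducts on the maximum number $m$ of geometric connected components of the fibres of $E\to S$ (finite by \cite{EGA}, IV$_3$.9.7.8 and noetherian induction) and base-changes along $E\to S$ \emph{itself} --- a morphism that is \'etale and defined over all of $S$ --- where the diagonal provides an open-and-closed section whose complement has strictly smaller $m$. If you want to salvage your approach, replace the cover trivializing the generic fibre by this self-base-change; your observation that a connected, \'etale, separated, quasi-compact $E_i$ whose generic fibre is a single rational point is an open immersion (via universal injectivity, checked on strict henselizations, plus the fact that a universally injective \'etale morphism is an open immersion) is essentially sound and would then complete the argument, though your stated justification via ``the locus where $E\to S$ is a monomorphism is open and closed'' should be replaced by the argument that the diagonal is open and closed in $E\times_S E$ and contains all maximal points.
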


\begin{proof}
Since $E\to S$ is of finite presentation, we can assume that $S$
is affine noetherian. Let $m(E/S)$ be the maximum of the number of
geometric connected components of the fibres of $E\to S$; this is
finite by \cite{EGA}, IV$_3$.9.7.8 and noetherian induction. The
base change $S'_1\defeq E\to S$ produces an open and closed section whose
complement has $m$-number strictly less. By induction on $m$, we
obtain a splitting of $E$ as a disjoint union of finitely many opens,
as asserted. The second claim follows because assuming birationality,
the number of opens has to be one.
\end{proof}

\begin{lemma} \label{lemma:smallest_open_affine}
Let $S$ be a separated noetherian scheme, and $U\subset S$
a nonempty dense open. Then the set of opens $V$ containing $U$
and such that $V\to S$ is affine is finite and has a smallest
element for inclusion.
\end{lemma}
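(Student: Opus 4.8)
The plan is to reduce the statement to a finiteness assertion about the complement $Z = S \setminus U$ and the supports of the coherent sheaves cutting it out, and then to exhibit the smallest affine open by taking an intersection. First I would recall that since $S$ is noetherian, the closed set $Z = S \setminus U$ has finitely many irreducible components $Z_1, \dots, Z_r$, each of codimension $\ge 1$ because $U$ is dense. An open $V$ with $U \subseteq V$ is determined by which points of $Z$ it contains, i.e.\ by the closed set $S \setminus V \subseteq Z$; and $V \to S$ being affine is a condition on $V$. The key classical input is that on a separated noetherian scheme, if $V$ is an open whose complement $S \setminus V$ has an irreducible component of codimension $\ge 2$, then $V \to S$ is not affine — this is the standard consequence of the fact that an affine open's complement in a separated scheme is pure of codimension one (via e.g.\ \cite{EGA} or depth/local-cohomology arguments: $H^0_{S\setminus V}$ versus $H^1_{S\setminus V}$). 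So any affine $V \supseteq U$ must have $S \setminus V$ of pure codimension $1$, hence $S \setminus V$ is a union of a subset of the codimension-$1$ components of $Z$. Since there are finitely many such components, there are only finitely many candidate opens $V$, which gives the finiteness claim.

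For the existence of a smallest element, the natural guess is $V_{\min} = \bigcap_{V} V$, the intersection over all affine opens containing $U$; equivalently $V_{\min}$ is the open whose complement is the union of \emph{all} codimension-$1$ components $Z_i$ of $Z$ that appear in some $S \setminus V$. The point to check is that this $V_{\min}$ is itself affine. Here I would use a gluing/intersection argument: on a separated scheme the intersection of two affine opens is affine, so a finite intersection of affine opens is affine — but one must be careful, because it is not a priori true that \emph{every} codimension-$1$ component of $Z$ is omitted by \emph{some} affine open $V$. The cleaner route is: let $\mathcal{V}$ be the (finite, by the previous paragraph) set of affine opens containing $U$; it is nonempty since $S$ is separated noetherian and one can find at least one affine open containing the generic points of $S$ and hence containing $U$ up to shrinking... actually the safest is to take $V_{\min} = \bigcap_{V \in \mathcal V} V$, a finite intersection of affine opens inside the separated scheme $S$, hence affine, and it obviously contains $U$ and is contained in every $V \in \mathcal V$, so it is the smallest element. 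The nonemptiness of $\mathcal V$ is the only remaining point, and it follows because $S$ is separated noetherian: it has a finite affine open cover, and one such affine open contains the (finitely many) generic points of $S$; enlarging or rather taking the affine open containing all generic points, its complement is a proper closed subset, and one checks it can be arranged to contain $U$ — or more simply, one notes that the existence of \emph{at least one} affine $V \supseteq U$ with $S\setminus V$ of pure codimension one can be obtained by removing, from a chosen affine open meeting $U$ densely, the codimension-$\ge 2$ part, using that $S$ is separated.

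The main obstacle I anticipate is precisely this last nonemptiness/existence point and the verification that the candidate minimal open is affine: one needs the characterization of affineness of an open in a separated noetherian scheme in terms of the complement being of pure codimension one (the ``Hartogs/codimension-one'' phenomenon), together with the stability of affineness under finite intersection in the separated case. Granting those two facts, the finiteness follows immediately from the finiteness of the set of codimension-$1$ components of $Z = S \setminus U$, and the smallest element is the intersection of all affine opens containing $U$. I would structure the write-up as: (i) reduce to $S$ affine is \emph{not} possible, so instead work with the finitely many components of $Z$; (ii) prove any affine $V\supseteq U$ has complement a union of codimension-$1$ components of $Z$, giving finiteness; (iii) show $\mathcal V \neq \varnothing$; (iv) take $V_{\min} = \bigcap_{V \in \mathcal V} V$ and invoke separatedness to see it is affine, hence the minimum.
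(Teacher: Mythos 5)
Your plan follows the paper's proof almost exactly: both arguments use the fact that if $V\to S$ is an affine (open immersion) morphism and $V\supseteq U$ is dense, then $S\setminus V$ is pure of codimension one (the paper cites \cite{EGA} IV.21.12.7), hence a union of one-codimensional irreducible components of $S\setminus U$, of which there are finitely many; and both take the minimum to be the intersection of all elements of the (finite) set, which is again affine over $S$ because $S$ is separated. Those two steps are correct and are all the paper does.

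The one place you go astray is the ``nonemptiness'' point that you single out as the main obstacle: it is a non-issue, and your proposed handling of it would not work. The hypothesis is that the \emph{morphism} $V\to S$ is affine, not that $V$ is an affine scheme; in particular $V=S$ always belongs to the set (the identity is an affine morphism), so $\mathcal V\neq\varnothing$ for free. Reading the condition as ``$V$ is an affine scheme'' would make the lemma false (e.g.\ $S=U=\mathbb P^1$), and your sketched arguments for producing an affine open containing $U$ --- choosing an affine open through the generic points, or ``removing the codimension-$\ge 2$ part'' of some affine open --- do not produce an open containing $U$, let alone an affine one. Once this is corrected, note also that the finite intersection $\bigcap_{V\in\mathcal V}V$ should be asserted to be \emph{affine over $S$} (by base change and composition of affine morphisms, or by separatedness of $S$), which is exactly what the statement requires; with that, your outline reduces to the paper's proof.
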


\begin{proof}
If $V$ is such an open, the complement $S\setminus V$ is
included in $S\setminus U$ and has pure codimension~1 in~$S$
by \cite{EGA} IV.21.12.7. This proves that $S\setminus V$ is a
union of one-codimensional irreducible components of
$S\setminus U$. Since these are finite in number, we see the set
of interest is finite. Since $S$ is separated, the intersection of
all its elements is again $S$-affine and is the smallest element.
\end{proof}

\begin{theo} \label{theo_affine_etale_hull}
Let $f:X\to S$ be a faithfully flat, finitely presented
morphism of algebraic spaces. Assume that $S$ is noetherian,
geometrically unibranch, without embedded points.
Then the category $\sfE^{\aff,\dom}(X/S)$ is a lattice, that
is, any two objects have a supremum and an infimum. Moreover
$\sfE^{\aff,\dom}(X/S)$ has a largest element.
\end{theo}

A similar statement holds in the category $\sfE^{\surj,\sep}(X/S)$
where existence of suprema and maximum are due to
Ferrand~\cite{Fe19}.

\begin{proof}
Throughout the proof we write $\sfE=\sfE^{\aff,\dom}(X/S)$.
Note that for each factorization $X\to E\to S$, the morphism
$X\to E$ is flat and finitely presented.

We start with the proof that any two factorizations $E_1,E_2\in \sfE$
have a supremum. By topological invariance of the \'etale site,
we can assume that $S$ is reduced. Let $E$ be the schematic image
of the morphism $X\to E_1\times_S E_2$. As a closed subscheme of
$E_1\times_SE_2$, it is affine and unramified over $S$. By
the theorem on unramified morphisms over unibranch schemes
(\cite{EGA}, IV.18.10.1), it is enough to prove that for each
$e\in E$ with image $s\in S$, the map of local rings
$\cO_{S,s}\to\cO_{E,e}$ is injective. Let $\eta_1,\dots,\eta_n$
be the associated points of $S$ and let $\cO_{E,\eta_i}$ be the
semi-local rings of the fibres of $E\to S$ at $\eta_i$. Like in the
proof of~\ref{lemma:universal_homeo_descent}, we have
$\Ass(E)=E_{\eta_1}\cup \dots\cup E_{\eta_n}$. We have a commutative
diagram:
\[
\begin{tikzcd}
\cO_{S,s} \ar[r] \ar[d,hook] & \cO_{E,e} \ar[d,hook] \\
\Pi_{i=1}^n\cO_{S,\eta_i} \ar[r,hook] & \Pi_{i=1}^n\cO_{E,\eta_i}.
\end{tikzcd}
\]
The left and right maps are injective. The bottom map is injective
also because $E_{\eta_i}$ is in the image of $X\to E$ and $X\to S$
is faithfully flat. Therefore $\cO_{S,s}\to\cO_{E,e}$ is injective
and this concludes the argument.

Now we prove that there is a largest element. For each $E\in\sfE$,
the image of $X\to E$ is an open subscheme $U\subset E$, \'etale,
separated, quasi-compact over $S$, which we call the ``image''
of the factorization $E$. It is determined by the scheme
$R\defeq X\times_EX=X\times_UX$ which is the graph in $X\times_SX$
of an open and closed equivalence relation: indeed, we recover $U$
as the quotient algebraic space $X/R$. Because $S$ is noetherian,
there are finitely many open and closed equivalence relations
(\cite{Fe19}, 3.2.1, 3.2.2) hence finitely many ``images'' $U$. By the existence of suprema in $\sfE$, the poset of ``images'' forms a directed finite set, hence it has a largest element.

We fix $E\in \sfE$ whose ``image'' $U$
is largest. It is now enough to prove that the directed set of maps $u:F\to E$
in $\sfE$ has a largest element $u^{max}\colon E^{max}\to E$. Since $\sfE$ is a directed set, $E^{max}$ will automatically be a largest element for it, concluding the proof.

Given a map $u\colon E'\to E$, we observe
that there is an induced isomorphism $U'\simeq U$ between the
``images''. Moreover $U\subset E$ and $U'\subset E'$ are schematically
dense in $E$. It follows that the induced \'etale surjective separated morphism from $E'$ onto its image $u(E')\subset E$ is birational,
hence an isomorphism by Lemma~\ref{lemma:splitting_etale_separated}.
Since $E'$ is affine over $S$, then so is $u(E')$; hence
Lemma~\ref{lemma:smallest_open_affine} applied to the open
$U\subset E$ implies that the directed set of maps $F\to E$
stabilizes, so eventually an $E^{max}$ is achieved.

Finally, we construct an infimum for $E_1$ and $E_2$. Let $E_0$ be
the pushout of the diagram $E_1\leftarrow X\to E_2$, that is, the
quotient of $E_1\sqcup E_2$ by the \'etale equivalence relation
that identifies the image of $X\to E_1$ and the image of $X\to E_2$.
Let $E$ be the largest element of the category
$\sfE^{\aff,\dom}(E_0/S)$. This is the infimum of $E_1$ and $E_2$.
\end{proof}

\begin{defi} \label{defi:etale affine hull}
With the notations and assumptions of
Theorem~\ref{theo_affine_etale_hull}, the largest element of
the poset
$\sfE^{\aff,\dom}(X/S)$ is called the {\em \'etale affine hull
of $X/S$} and denoted $\pi^a(X/S)$. Its $\cO_S$-sheaf of
functions is called the {\em largest (quasi-coherent)
\'etale $\cO_S$-subalgebra of $f_*\cO_X$}.
\end{defi}

Giving an existence proof which is more constructive than the
one given above is not easy because of the limited formal properties
of the \'etale affine hull (compatibility with base change, with the
formation of products, etc). Such properties are of course
essential in most situations where the etale affine hull is useful.
A sample of base change results for the \'etale separated hull is
given in \cite{Fe19}, \S~7. Similar results can be proven for the
\'etale affine hull.

\begin{coro}\label{lemma:fonctoriality_for_pi^a}
Let $S$ be a noetherian geometrically unibranch scheme without
embedded points. Let $u:X\to Y$ be a morphism
between faithfully flat, finitely presented $S$-algebraic spaces.
\begin{trivlist}
\itemn{1} There is an induced morphism of \'etale affine hulls
$\pi^a(X/S)\to \pi^a(Y/S)$.
\itemn{2} The functor $\pi^a$ is left adjoint to the inclusion of the
category of \'etale, affine $S$-schemes into the category of
faithfully flat, finitely presented $S$-algebraic spaces.
\end{trivlist}
\end{coro}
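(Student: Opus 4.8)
The plan is to prove (2) first; then (1) is automatic, since a left adjoint is functorial (alternatively (1) follows from the extension property established below, applied to the composite $X\xrightarrow{u}Y\to\pi^a(Y/S)$). By Theorem~\ref{theo_affine_etale_hull}, each faithfully flat, finitely presented $S$-algebraic space $X$ carries a distinguished factorization $\eta_X\colon X\to\pi^a(X/S)$, namely the largest element of $\sfE^{\aff,\dom}(X/S)$; this will be the unit of the adjunction. So it suffices to show that for every \'etale affine $S$-scheme $E$ the precomposition map
\[
\eta_X^*\colon \Hom_S(\pi^a(X/S),E)\too \Hom_S(X,E)
\]
is a bijection, functorially in $E$ (functoriality in $E$ being obvious). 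By topological invariance of the situation --- Corollary~\ref{coro:universal_homeo_descent} applied to $S_{\mathrm{red}}\to S$, which is integral, radicial, surjective and bijective on embedded points as both schemes have none, together with descent of morphisms into \'etale schemes along universal homeomorphisms --- we may and do assume that $S$, hence also $E$, is reduced.

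Injectivity of $\eta_X^*$ is immediate: $\eta_X$ is schematically dominant and $E\to S$ is separated, so two $S$-morphisms $\pi^a(X/S)\rightrightarrows E$ agreeing after composition with $\eta_X$ have an equalizer which is a closed subscheme of $\pi^a(X/S)$ containing the schematically dense image of $X$, hence is all of $\pi^a(X/S)$.

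For surjectivity, fix $g\colon X\to E$ and write it through its graph, $X\xrightarrow{(\id,g)}X\times_SE\xrightarrow{\pr_2}E$: the first map is an open and closed immersion (a section of $\pr_1$, which is affine and \'etale), the second is faithfully flat of finite presentation, so the image $V$ of $g$ is open in $E$, the induced map $X\to V$ is flat, finitely presented and surjective, and $V\to S$ is \'etale, quasi-compact and separated. Let $E'\subset E$ be the schematic image of $g$; as $E$ is reduced, $E'$ is the reduced closure $\overline V$, a closed subscheme of $E$, so $E'\to S$ is affine, of finite type and unramified, while $X\to E'$ is flat, finitely presented and schematically dominant (its image $V$ is dense open, hence contains $\Ass(E')$). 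The crucial point is that $E'\to S$ is \'etale: by the theorem on unramified morphisms over geometrically unibranch bases (\cite{EGA}, IV.18.10.1), used as in the proof of Theorem~\ref{theo_affine_etale_hull}, it suffices to prove $\cO_{S,s}\to\cO_{E',e'}$ injective for every $e'\in E'$ over $s\in S$. Here geometric unibranchness enters: $\cO_{S,s}$ is a domain, so $s$ lies in the closure of a unique generic point $\xi$ of $S$, and injectivity amounts to finding a generization $\xi'$ of $e'$ in $E'$ mapping to $\xi$. Take $\xi'$ to be the generic point of an irreducible component of $E'$ through $e'$; then $\xi'\in V$ because $V$ is dense, and since $V\to S$ is \'etale the minimal point $\xi'$ of $V$ maps to a generic point $\xi''$ of $S$; as $s\in\overline{\{\xi''\}}$, unibranchness forces $\xi''=\xi$. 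Hence $E'\in\sfE^{\aff,\dom}(X/S)$, and by maximality of $\pi^a(X/S)$ there is a unique $S$-morphism $\pi^a(X/S)\to E'$ compatible with the structure maps from $X$; composing with $E'\hookrightarrow E$ extends $g$ to $\pi^a(X/S)$. This gives surjectivity, hence the adjunction, hence the corollary.

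I expect the main obstacle to be precisely the \'etaleness of the schematic image $E'$ of a general morphism $g\colon X\to E$: this is free over a field or an Artinian base, but over a positive-dimensional base it is exactly the geometrically unibranch hypothesis --- through the uniqueness of the generic point specializing to a given point --- that makes the local rings inject and lets us invoke EGA~IV.18.10.1. The remaining steps are routine manipulations of schematic images and flatness on top of the already-proved Theorem~\ref{theo_affine_etale_hull}.
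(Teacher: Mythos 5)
Your proof is correct and follows essentially the same route as the paper's: both take the schematic image of a morphism into an \'etale affine target, show it is \'etale via \cite{EGA}, IV.18.10.1 over the geometrically unibranch base (after reducing to $S$ reduced by topological invariance), and conclude by the maximality of $\pi^a(X/S)$ in $\sfE^{\aff,\dom}(X/S)$. The only differences are cosmetic: you establish the adjunction bijection first and deduce (1) from it (the paper proves (1) first), and you verify the injectivity of the local-ring maps $\cO_{S,s}\to\cO_{E',e'}$ via uniqueness of generic specializations rather than the paper's argument with associated points and faithful flatness --- both hinge on the same unibranch/no-embedded-points hypotheses.
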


\begin{proof}
(1) By topological invariance of the \'etale site
(Lemma~\ref{lemma:universal_homeo_descent}), we can
assume that $S$ is reduced. Let $E$ be the schematic image of
$X\to Y\to \pi^a(Y/S)$. It follows from the theorem on unramified
morphisms over unibranch schemes (\cite{EGA}, IV.18.10.1)
that $E\to S$ is \'etale. By the definition of $\pi^a(X/S)$
we obtain a morphism $\pi^a(X/S)\to \pi^a(Y/S)$.

\smallskip

\noindent (2) Let $u:X\to E$ be an $S$-morphism from a faithfully
flat, finitely presented space to an \'etale, affine scheme.
By (1) there is an induced morphism $\pi^a(X/S)\to \pi^a(E/S)$.
Since $E\to \pi^a(E/S)$ is an isomorphism, we obtain a morphism
$\pi^a(X/S)\to E$.
\end{proof}

\subsection{Affine hull of the space of components}
\label{subsection_global_section}

Let $S$ be a noetherian scheme and $X\to S$ a flat
separable morphism of finite type. A priori, there is no reason
to expect that $\pi_0(X/S)^{\aff}\to S$, the affine hull of
$\pi_0(X/S)\to S$, be \'etale. There are two reasons for this:
the first, is that a priori $\pi_0(X/S)^{\aff}$ may not be
of finite type. The second reason is that, even when it is of
finite type, it may well be ramified. This may happen already
over a dimension~$1$ base with a nodal singularity, as
Example~\ref{example_non_perfect} illustrates.

Here we describe a case where $\pi_0(X/S)^{\aff}$ is \'etale,
for some geometrically unibranch reduced base schemes $S$.
More precisely, in this situation the \'etale affine hull
$\pi^a(X/S)\to S$ exists, and there is a natural map
$\pi_0(X/S)^{\aff}\to \pi^a(X/S)$. We will prove that under some
local factoriality-type conditions, this is an isomorphism.

\begin{defi}
A noetherian local ring $R$ is called {\em geometrically set-theoretically
factorial} if its strict henselization is integral, and each
pure one-codimensional closed subscheme of $\Spec(R)$ has the
same support as a principal closed subscheme.
\end{defi}

Although a little ill-looking, this definition includes many
examples of interest such as regular rings, $\QQ$-factorial rings
like the quadratic cone singularity $xy=z^2$, and all reduced
unibranch curves. We note moreover that these examples are also $S_2$
and hence satisfy all the assumptions of the following statement.

\begin{prop} \label{prop:set_theoretically_factorial_etale}
Let $X\to S$ be a morphism of algebraic spaces which is flat,
separable, and finitely presented. Assume that $S$ is locally
noetherian, $S_2$, with geometrically set-theoretically factorial
local rings. Then the natural map $\pi_0(X/S)^{\aff}\to \pi^a(X/S)$
is an isomorphism.
\end{prop}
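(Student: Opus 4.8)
The plan is to exhibit mutually inverse maps, and since both sides are constructed as largest elements of posets of factorizations, most of the work is to check that the objects coincide as subsheaves of $f_*\cO_X$ after reducing to a purely local commutative-algebra statement. First I would recall that by topological invariance of the \'etale site (Lemma~\ref{lemma:universal_homeo_descent}), and since $\pi_0$, $\pi^a$ and the formation of schematic images are compatible with integral radicial surjective base change, we may assume $S$ is reduced, and then work locally, so that $S=\Spec(R)$ with $R$ a noetherian local $S_2$ ring that is geometrically set-theoretically factorial; in particular $R^{\sh}$ is a normal (integral, $S_2$, unibranch) domain. The map $\pi_0(X/S)^{\aff}\to\pi^a(X/S)$ comes from the universal property: $X\to\pi_0(X/S)\to\pi_0(X/S)^{\aff}$ is an $S$-morphism to an \'etale affine scheme, hence factors through $\pi^a(X/S)$; equivalently, at the level of algebras, $A^{\et/R}\subseteq\cO(\pi_0(X/S))$ inside $f_*\cO_X$ (here I write $A=\cO(\pi_0(X/S))^{\aff}$ loosely — the point is we have an inclusion of the two \'etale $R$-subalgebras of $f_*\cO_X$, and we must show equality).

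The heart of the matter is the reverse inclusion, i.e. that the \'etale affine hull $\pi^a(X/S)$ already receives the map from $X$ through $\pi_0(X/S)$, not merely through its affine hull. So the key step is: \emph{the structure morphism $\pi^a(X/S)\to S$ is not just \'etale and affine but, over the reduced local base, actually an isomorphism onto an open-and-closed piece} — more precisely I would argue that $\pi^a(X/S)\to S$ is finite (being \'etale and affine over a complete-ish local base one controls it via the special fibre) and hence, by Lemma~\ref{lemma:splitting_etale_separated} and the set-theoretic factoriality, that its image is all of $S$, i.e. $\pi^a(X/S)\to S$ is surjective. Concretely: an \'etale affine $S$-scheme $E$ with $X\to E$ schematically dominant corresponds to a finite product $\prod R_j$ of \'etale $R$-algebras with $R\hookrightarrow\prod R_j$; each $\Spec R_j\to S$ is an open immersion after \'etale base change (Lemma~\ref{lemma:splitting_etale_separated}), so its complement is a pure codimension-one closed subset; set-theoretic factoriality makes that complement the support of a principal divisor, and $S_2$-ness together with schematic dominance of $X\to E$ (so that $X\to S$ being faithfully flat forces the relevant associated points into the image) forces that principal divisor to be a unit, whence $\Spec R_j\to S$ is an isomorphism. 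Thus every object of $\sfE^{\aff,\dom}(X/S)$ has structure morphism a finite disjoint union of copies of $S$, and therefore the largest such object computes the same thing as the affine hull of the largest \'etale \emph{space} quotient, namely $\pi_0$. Running this argument for the maximal $E=\pi^a(X/S)$ shows $\pi^a(X/S)\to S$ is an isomorphism onto $S$ in the appropriate open-and-closed sense, so the canonical morphism $\pi_0(X/S)\to\pi^a(X/S)$ exists (not only $\pi_0(X/S)^{\aff}\to\pi^a(X/S)$), and the induced inclusion of \'etale $R$-subalgebras of $f_*\cO_X$ goes the other way, giving equality.

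The main obstacle I expect is exactly the passage from "\'etale and affine" to "surjective/an isomorphism" over the local base: an \'etale affine $S$-scheme need not be finite, so one cannot directly invoke properness; I would handle this by first using Lemma~\ref{lemma:splitting_etale_separated} to reduce, after a further \'etale surjective base change, to the case where $\pi^a(X/S)$ is literally a finite disjoint union of opens $V_j\subseteq S$, and then showing each $V_j=S$. That last equality is where $S_2$ and geometric set-theoretic factoriality are essential and where the bookkeeping with associated points (as in the proof of Theorem~\ref{theo_affine_etale_hull}: $\Ass(E)=\bigcup_{s\in\Ass S}E_s$, and $X\to S$ faithfully flat puts the $E_s$ in the image of $X$) has to be done carefully so that a complement of pure codimension one cut out set-theoretically by a function $t$ is forced, via $\Ass$, to be empty. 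Once that local statement is in hand, descent and the universal properties glue everything with no further difficulty.
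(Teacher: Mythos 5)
There is a genuine gap, and it sits exactly where you predicted the main obstacle would be. Your central claim --- that over the reduced local base every object $E$ of $\sfE^{\aff,\dom}(X/S)$, and in particular $\pi^a(X/S)$, is forced to be a finite disjoint union of copies of $S$ because the pure codimension-one complement must be cut out by a unit --- is false. Take $S=\Spec R$ with $R$ a discrete valuation ring (regular, hence $S_2$ and geometrically set-theoretically factorial) and $X=S\sqcup U$ with $U\subset S$ the complement of the closed point; this is faithfully flat, finitely presented and separable over $S$. Since $X$ is itself \'etale and affine over $S$ and $\id_X$ is schematically dominant, $\pi^a(X/S)=X$, whose second component is a non-surjective open immersion with complement cut out by the uniformizer, which is not a unit. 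The mechanism you invoke cannot rule this out: for reduced $S$ the set $\Ass(S)$ consists of the generic points, so $\Ass(E)=\bigcup_{s\in\Ass(S)}E_s$ is automatically contained in the image of $X$ for any schematically dense open piece, and schematic dominance says nothing about points of codimension $\ge 1$. (The proposition still holds in this example: $\cO(\pi_0(X/S))=R\times R_\pi$ is \'etale over $R$ because $R_\pi$ is a localization --- \'etale, but not surjective over $S$.)

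Relatedly, your construction of the ``natural map'' presupposes what is to be proved: you treat $X\to\pi_0(X/S)\to\pi_0(X/S)^{\aff}$ as a morphism to an \'etale affine scheme, but \'etaleness of $\pi_0(X/S)^{\aff}\to S$ is precisely the content of the proposition (a priori this affine hull may be ramified, or not even of finite type). The natural map is obtained the other way: $X\to\pi^a(X/S)$ factors through $\pi_0(X/S)$ by the adjunction for \'etale quasi-compact targets, and then through $\pi_0(X/S)^{\aff}$ by affineness. The real work, which your plan does not engage, is to show that $\cO(\pi_0(X/S))$ --- the ring of global functions on a possibly non-separated \'etale algebraic space, as in Lemma~\ref{example_1} --- is an \'etale $R$-algebra. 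The paper does this by replacing $E=\pi_0(X/S)$ by its separated hull $\pi^s(E/S)$ (which has the same affine hull), reducing via Lemma~\ref{lemma:splitting_etale_separated} to the case where $E$ is an open $U\subseteq S$ with complement $Z=Z_1\cup Z'$, and then using set-theoretic factoriality to write $\cO(S\setminus Z_1)=\cO(S)_f$ (a localization, hence \'etale \emph{without} being surjective) and the $S_2$ hypothesis to identify $\cO(S\setminus Z_1)$ with $\cO(U)$. Even if your structural claim about $\pi^a(X/S)$ were true, it would not by itself yield this statement about $\pi_0(X/S)^{\aff}$.
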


\begin{proof}
It is enough to prove that $\pi_0(X/S)^{\aff}\to S$ is \'etale.
We prove more generally that for all \'etale, quasi-compact algebraic
spaces $E\to S$ the map $E^{\aff}\to S$ is \'etale. For this, we
can work \'etale-locally on $S$. First let us see that we can reduce
to the case where $E\to S$ is separated. By Ferrand \cite{Fe19},
Th.~3.2.1 there is an \'etale separated hull $\pi^s(E/S)\to S$.
By \cite{Fe19}, Prop.~8.1.2 the map $E\to \pi^s(E/S)$ is initial
among maps to separated schemes; note that Ferrand assumes normality
of $S$ but really uses only the unibranch hypothesis
(in {\em loc. cit.}, this is said explicitly before Lemma~6.1.1 which
is the key to Lemma~8.1.1). Since $E^{\aff}\to S$ is separated, we obtain
a factorization $E\to \pi^s(E/S)\to E^{\aff}$. Taking global sections,
the map $\cO(E^{\aff})\to \cO(\pi^s(E/S))\to \cO(E)=\cO(E^{\aff})$
is the identity; since $E\to \pi^s(E/S)$ is dominant we see that $E$
has the same affine hull as $\pi^s(E/S)$. Hence replacing $E$ by
$\pi^s(E/S)$ if necessary, we can assume that it is separated. By
Lemma~\ref{lemma:splitting_etale_separated}, working \'etale-locally
around a fixed point $s\in S$ we can reduce to the case where $S$ is
affine and $E$ is an open of $S$. Let us write the closed complement as
$Z\defeq S\setminus E=Z_1\cup Z'$ where $Z_1$ has pure codimension 1 in $S$
and $Z'$ has codimension at least~2. By the assumption that the strictly
local ring of $s$ is geometrically set-theoretically factorial, the
1-cycle $Z_1$ is set-theoretically principal on a small enough \'etale
neighbourhood of $s$ in $S$. We replace $S$ by such a neighbourhood
and let $f$ be a local equation for $Z_1$. Then the morphism
$\cO(S)\to\cO(S\setminus Z_1)$ is the localization-by-$f$ map which
is \'etale. Since moreover $S$ has the $S_2$ property, the restriction
$\cO(S\setminus Z_1)\to\cO(S\setminus Z)=\cO(E)$
is an isomorphism. The result follows.
\end{proof}

In the one-dimensional case, removing the unibranch condition
in~\ref{prop:set_theoretically_factorial_etale}
yields a weaker result:

\begin{prop}\label{prop_curve_qf}
Let $S$ be a reduced noetherian excellent scheme of dimension
$\leq 1$. Let $X\to S$ be a flat separable morphism of finite
presentation. Then $\pi_0(X/S)^{\aff}$ is quasi-finite.
\end{prop}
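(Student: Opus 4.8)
The plan is to reduce to a strictly henselian local base and then describe $\pi_0(X/S)$ explicitly enough to control its ring of global functions. Since $\pi_0$ commutes with arbitrary base change and the structure morphism $g\colon\pi_0(X/S)\to S$ is \'etale and quasi-compact, hence quasi-compact and quasi-separated, the formation of $\pi_0(X/S)^{\aff}=\Spec_S(g_*\cO_{\pi_0(X/S)})$ commutes with flat base change on $S$; and quasi-finiteness (locally of finite type together with finite fibres) is fpqc-local on $S$, so it may be checked after passing to the strict henselizations $\cO^{\mathrm{sh}}_{S,s}$, which are flat over $S$ and remain reduced, noetherian of dimension $\le1$. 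So I would assume $S=\Spec R$ with $R$ strictly henselian, reduced, noetherian, $\dim R\le1$, and write $E=\pi_0(X/S)$, an \'etale quasi-compact $R$-algebraic space. If $\dim R=0$ then $R$ is a field, $E$ is finite \'etale and there is nothing to prove; so assume $\dim R=1$, let $\mathfrak p_1,\dots,\mathfrak p_n$ be the minimal primes (each $R_{\mathfrak p_l}=\kappa(\mathfrak p_l)$ a field, as $R$ is reduced) and $\mathfrak m$ the closed point, with residue field $\kappa$.

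The key structural input is a picture of $E$. The closed fibre $E_{\mathfrak m}$ is \'etale quasi-compact over the separably closed field $\kappa$, hence a finite disjoint union of copies of $\Spec\kappa$, at points $e_1,\dots,e_r$; because $R$ is strictly henselian each $e_i$ lifts to a section $\sigma_i\colon\Spec R\to E$, necessarily an open immersion onto an open $U_i\cong\Spec R$. On the other hand, since $R$ is reduced of dimension $1$, the punctured spectrum $\Spec R\setminus\{\mathfrak m\}$ is the finite disjoint union $\coprod_l\Spec\kappa(\mathfrak p_l)$, so the open subspace $E_{\mathrm{gen}}:=g^{-1}(\Spec R\setminus\{\mathfrak m\})$ is a finite disjoint union of spectra of finite separable extensions of the $\kappa(\mathfrak p_l)$ --- in particular affine. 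The opens $U_1,\dots,U_r,E_{\mathrm{gen}}$ cover $E$, and every pairwise intersection lies inside some $U_i\setminus\{e_i\}\cong\Spec R\setminus\{\mathfrak m\}$, hence is a finite disjoint union of the $\Spec\kappa(\mathfrak p_l)$, again affine.

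Then I would compute. By the sheaf axiom for the cover $\{U,E_{\mathrm{gen}}\}$, with $U=\bigcup_iU_i$, $\Gamma(E,\cO_E)$ is the fibre product $\Gamma(U,\cO)\times_{\Gamma(U\cap E_{\mathrm{gen}},\cO)}\Gamma(E_{\mathrm{gen}},\cO)$. Here $\Gamma(U,\cO)$ is a submodule of $\bigoplus_i\Gamma(U_i,\cO)=R^r$, hence a finite $R$-module; and $\Gamma(E_{\mathrm{gen}},\cO)$ is of finite type over $R$, the point being that each $\kappa(\mathfrak p_l)=R_{\mathfrak p_l}$ is of finite type over $R$: choosing by prime avoidance an $f\in(\bigcap_{j\ne l}\mathfrak p_j)\setminus\mathfrak p_l$ one has $\Spec R[1/f]=\{\mathfrak p_l\}$, so $R[1/f]=R_{\mathfrak p_l}$. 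The first projection $p_1\colon\Gamma(E,\cO_E)\to\Gamma(U,\cO)$ then sits in a short exact sequence $0\to I\to\Gamma(E,\cO_E)\to p_1(\Gamma(E,\cO_E))\to0$ in which the quotient is a sub-$R$-module of $\Gamma(U,\cO)$, hence finite over $R$, and $I=\ker(\Gamma(E_{\mathrm{gen}},\cO)\to\Gamma(U\cap E_{\mathrm{gen}},\cO))$ is an ideal of $\Gamma(E,\cO_E)$ cut out by finitely many idempotents of $\Gamma(E_{\mathrm{gen}},\cO)$ and finitely many generators of the corresponding direct factors; so $I$ is a finitely generated ideal and $\Gamma(E,\cO_E)$ is a finite type $R$-algebra. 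For the fibres: over $\mathfrak p_l$ one gets $\Gamma(E,\cO_E)\otimes_R\kappa(\mathfrak p_l)=\Gamma(\pi_0(X_{\mathfrak p_l}/\kappa(\mathfrak p_l)),\cO)$ by flat base change, a finite $\kappa(\mathfrak p_l)$-algebra; and over $\mathfrak m$ one gets a quotient of the finite-dimensional $p_1(\Gamma(E,\cO_E))\otimes_R\kappa$, since $\mathfrak m$ acts invertibly on $I$ (a module over $\prod_l\kappa(\mathfrak p_l)$) and hence $I\otimes_R\kappa=0$. Thus $\pi_0(X/S)^{\aff}\to S$ is quasi-finite.

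The main obstacle is exactly this last computation, and what makes it non-routine is that $\pi_0(X/S)$ is only an algebraic space and may fail to be separated --- it can look like a ``line with doubled origin'' over a curve --- so the classical structure theory of separated quasi-finite morphisms over a henselian base (a finite part plus a part with empty closed fibre, triviality of finite \'etale covers over a strictly henselian local ring) is not directly available, and the presentation by the sections $\sigma_i$ together with the affine ``generic part'' $E_{\mathrm{gen}}$ is what replaces it; the one genuinely non-formal ingredient inside it is that localizations of a reduced one-dimensional local ring at its minimal primes are of finite type. If that route turned out to be too delicate, a fallback would be to base-change along the finite normalization $\widetilde S\to S$ (so $\widetilde S$ is regular, hence geometrically set-theoretically factorial and $S_2$), invoke Proposition~\ref{prop:set_theoretically_factorial_etale} to see that $\pi_0(X_{\widetilde S}/\widetilde S)^{\aff}$ is \'etale, hence quasi-finite, over $\widetilde S$, and then descend quasi-finiteness along the finite surjection $\widetilde S\to S$ using that $E$ is reduced; the difficulty there is that $\widetilde S\to S$ is not flat, so relating $\Gamma(E,\cO_E)$ to $\Gamma(E\times_S\widetilde S,\cO)$ requires extra input.
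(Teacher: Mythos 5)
The local computation at the heart of your argument is correct and is genuinely different from the paper's route: over a strictly henselian, reduced, one\nobreakdash-dimensional local base you describe $E=\pi_0(X/S)$ as glued from $r$ copies of $\Spec R$ (the sections through the split closed fibre) and an affine generic part, and deduce an extension $0\to I\to\Gamma(E,\cO_E)\to M\to 0$ with $M$ a finite $R$-module and $I$ a finite product of finite-type $R$-algebras; this avoids both the excellence hypothesis and Proposition~\ref{prop:set_theoretically_factorial_etale} in the main line. The gap is in the opening reduction. Quasi-finiteness is fpqc-local on the base, but the family $\{\Spec\cO_{S,s}^{\rm sh}\to S\}_{s\in S}$ is \emph{not} an fpqc covering unless $S$ is semi-local: the image of $\Spec\cO_{S,s}^{\rm sh}$ is the set of generizations of $s$, and no finite subfamily covers $S$ when $S$ has infinitely many closed points. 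The pointwise half of quasi-finiteness (finite fibres) does pass through, but the other half (finite type) genuinely cannot be detected on strict henselizations: for $R=k[t]$ and the square-zero extension $\cA=R\oplus\bigoplus_{\mathfrak m}R/\mathfrak m$ (sum over all maximal ideals), $\cA\otimes_R\cO_{R,\mathfrak p}^{\rm sh}$ is a finite $\cO_{R,\mathfrak p}^{\rm sh}$-algebra for every prime $\mathfrak p$, yet $\cA$ is not of finite type over $R$. Your argument therefore proves finite generation of $\Gamma(E,\cO_E)\otimes_R\cO_{S,s}^{\rm sh}$ for every $s$, and nothing special about $g_*\cO_E$ is invoked to descend this to finite generation of $\Gamma(E,\cO_E)$ over $\cO_S$.

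Repairing this is not cosmetic, because your structural description does not spread out to a finitely presented \'etale neighbourhood $(S',s')$: the sections $\sigma_i$ only cover the fibre over $s'$, while the ``generic part'' would have to avoid the fibres over \emph{all} closed points of $S'$, so the cover $\{U_i,E_{\mathrm{gen}}\}$ exists only after henselization. This is exactly the difficulty the paper's proof is organized around: it works over an honest (hence fppf) \'etale neighbourhood, decomposes $\pi_0$ globally over $S'$ into the union $\pi'$ of components meeting the fibre at $s'$ (made finite over $S'$ by a schematically dominant map from $\bigsqcup S'$) and the remainder $\pi^*$, which lives over the geometrically unibranch open $S'\setminus\{s'\}$ and is handled by Proposition~\ref{prop:set_theoretically_factorial_etale} --- and the excellence hypothesis, which your argument never uses, is what produces such a neighbourhood. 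Your fallback via the normalization $\widetilde S\to S$ runs into the separate problem you already flagged ($\widetilde S\to S$ is not flat, and a subalgebra of a finite-type algebra need not be of finite type), so as written neither route in the proposal closes the argument.
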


\begin{proof}
Quasi-finiteness of $\pi_0(X/S)^{\aff}$ may be checked \'etale locally on $S$. So we let $s$ be a geometric point of $S$ and $(S',s')\to (S,s)$ an \'etale neighbourhood such that:
\begin{itemize}
\item[i)] the irreducible components $S_1,\ldots,S_n$ of $S'$ are geometrically unibranch;
\item[ii)] for every $i\neq j$, $S_i\cap S_j=\{s'\}$;
\item[iii)] the fibre $\pi_0(X/S)_{s'}$ is a disjoint union of copies of
$\Spec k(s')$;
\item[iv)] $S'=\Spec R'$ is affine.
\end{itemize}
The reason why an \'etale neighbourhood satisfying condition~i)
exists, is that the regular locus of~$S$ is open dense by
excellence, hence so is the geometrically unibranch locus.
So we may replace~$S$ by $S'$ and assume that $S=\Spec R$
satisfies the properties above.

Write $\pi=\pi_0(X/S)$. Then $\pi=\pi'\sqcup \pi^*$, where $\pi^*$
is the union of those connected components that do not meet the fibre
$\pi_{s'}$. Then $\pi^*$ lives over $S'\setminus\{s'\}$ which by
condition ii) is geometrically unibranch. By
Proposition~\ref{prop:set_theoretically_factorial_etale},
the map $\pi^{\aff}\to S$ is \'etale, and in
particular quasi-finite. It remains to check that $\pi'^{\aff}$ is quasi-finite.

Up to restricting $S$ by a further \'etale neighbourhood of $s$, we may assume that the isomorphism $\bigsqcup_{i=1}^n\Spec k(s)\to \pi'_s$ extends to an open immersion $\alpha\colon\bigsqcup_{i=1}^nS\to \pi'$. We claim that $\alpha$ has dense image. Indeed, let $Z$ be an irreducible component of $\pi'$. Then $Z$ maps to some irreducible component $S_i$ of $S$. By assumption, $S_i$ is geometrically unibranch, so by \cite{EGA}, th. 18.10.1, $Z\to S_i$ is \'etale. In particular $Z\to \pi'_{S_i}$ is an \'etale, closed immersion, that is, $Z$ is a connected component of $\pi'_{S_i}$. Thanks to condition ii), $Z$ is also a connected component of $\pi'$, and therefore meets the closed fibre. In particular it meets the image of $\alpha$. This proves the claim.

The morphism $\alpha$ is dominant and induces an injective $R$-algebra morphism $\cO(\pi')\into R^n.$ It follows that $\cO(\pi')$ is finite as an $R$-module. In particular $\pi'^{\aff}\to S$ is finite.
\end{proof}

\subsection{Computing the space of components}
\label{subsection:computing pi0}

In this subsection, we collect some ways to compute the space
of connected components $\pi_0(X/S)$ in various situations:
when we apply changes of the base, when we use an atlas of $X$,
and when we complete along a closed fibre of $X\to S$.

The first two results deal with arbitrary base change and
\'etale base restriction.
Both results hold whether~$\pi_0(X/S)$ is representable or not.

\begin{lemma} \label{lemma:base change for pi0}
Let $\mathscr{X}/S$ be an $S$-algebraic stack and let
$S'\to S$ be a base change. Then we have a canonical
isomorphism of $S'$-functors
$\pi_0(\mathscr{X}\times_SS'/S') \isomto \pi_0(\mathscr{X}/S)\times_S S'$.
\end{lemma}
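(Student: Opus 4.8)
The plan is to reduce the statement to the known compatibility of the construction of $\pi_0$ with base change, which is one of the main properties established in \cite{Rom11}, 2.1.3. Recall that $\pi_0(\mathscr{X}/S)$ is defined as the fppf sheafification of the presheaf sending an $S$-scheme $T$ to the set of open-and-closed decompositions of $\mathscr{X}\times_S T$ over $T$ (equivalently, via the moduli description, to locally constant functions on geometric fibres), and that this sheaf is representable by an algebraic space under the separability or zero-dimensionality hypotheses, but the sheaf-theoretic statement needs no such hypothesis. So the strategy is: first treat everything at the level of fppf sheaves, where base change is formal, and only afterwards invoke representability where it holds.

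First I would unwind the universal property. For an $S'$-scheme $T'$, an element of $\pi_0(\mathscr{X}\times_S S'/S')(T')$ is (after fppf sheafification) a locally constant partition of the geometric fibres of $(\mathscr{X}\times_S S')\times_{S'} T' \to T'$. But $(\mathscr{X}\times_S S')\times_{S'} T' = \mathscr{X}\times_S T'$ canonically, where $T'$ is viewed as an $S$-scheme via $T'\to S'\to S$. Hence this set is precisely $\pi_0(\mathscr{X}/S)(T')$, where again $T'$ is an $S$-scheme; and by the universal property of fibre product of sheaves, $\pi_0(\mathscr{X}/S)(T')$ for $T'/S$ factoring through $S'$ is exactly $(\pi_0(\mathscr{X}/S)\times_S S')(T')$. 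The only subtlety is to make sure sheafification commutes with the identification, i.e. that the fppf topology on $S'$-schemes is compatible with restriction of the fppf topology on $S$-schemes along $S'\to S$; this is standard since an fppf cover of an $S'$-scheme is in particular an fppf cover of it as an $S$-scheme and conversely. So the isomorphism of sheaves follows by comparing $T'$-points functorially.

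Concretely I would organize the argument as: (1) fix the presheaf-level description of $\pi_0$ before sheafification; (2) note the canonical isomorphism of stacks $(\mathscr{X}\times_S S')\times_{S'}T'\simeq \mathscr{X}\times_S T'$ over any $S'$-scheme $T'$; (3) deduce an isomorphism of the corresponding presheaves on $(\mathrm{Sch}/S')$ between $\pi_0^{\mathrm{pre}}(\mathscr{X}\times_S S'/S')$ and the restriction of $\pi_0^{\mathrm{pre}}(\mathscr{X}/S)$ along $(\mathrm{Sch}/S')\to(\mathrm{Sch}/S)$, which is itself the presheaf underlying $\pi_0^{\mathrm{pre}}(\mathscr{X}/S)\times_S S'$; (4) apply fppf sheafification, using that it commutes with the restriction functor and with finite limits, to conclude. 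Finally I would remark that the isomorphism is canonical, hence in particular independent of choices, and that it is compatible with iterated base changes — though that last compatibility is not needed for the statement. The main obstacle, such as it is, is purely bookkeeping: checking that sheafification interacts correctly with the change-of-site functor $(\mathrm{Sch}/S')\to(\mathrm{Sch}/S)$; once that is granted the rest is a tautology, which is why the statement is phrased as a lemma rather than a theorem.
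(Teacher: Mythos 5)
Your proposal is correct and follows essentially the same route as the paper: the paper's proof simply observes that both sides parametrize relative connected components of $\mathscr{X}\times_S T'$ for variable $S'$-schemes $T'$, which is exactly your steps (2)--(3). The extra bookkeeping you include about sheafification and the change-of-site functor is harmless but not needed, since the moduli description of $\pi_0$ already yields the identification of $T'$-points directly.
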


\begin{proof}
This follows from the definition of $\pi_0$ because both sides
of the map in the statement parametrize relative connected
components of $\mathscr{X}\times_S T'$ for variable $S'$-schemes $T'$.
\end{proof}

The following statement is related to factorizations
in the sense of Definition~\ref{defi:cats_of_factorizations}.

\begin{lemma} \label{lemma:base restriction}
Let $\sX\stackrel{h}{\too}\sE\stackrel{f}{\too} S$
be morphisms of algebraic stacks.
\begin{trivlist}
    \itemn{1} If $\sE\to S$ is an \'etale algebraic space, there is a morphism of $S$-functors
    \[
    f_!\pi_0(\sX/\sE) \too \pi_0(f_!\sX/S).
    \]
    which is an isomorphism when $\sX\to \sE$ is universally open.
    \itemn{2} If $\sX,\sE$ are finitely presented over $S$ and $\sX\to \sE$ is a universal submersion with connected geometric fibres, there is an isomorphism
    \[
    \pi_0(\sX/S) \isomto \pi_0(\sE/S).
    \]
\end{trivlist}
\end{lemma}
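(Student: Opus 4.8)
The plan is to unwind both statements directly from the moduli description of $\pi_0$ given in \cite{Rom11}: for a flat, finitely presented, separable morphism $g:\sZ\to T$, the functor $\pi_0(\sZ/T)$ sends a $T$-scheme $T'$ to the set of open-and-closed subschemes of $\sZ\times_T T'$ that are \emph{relatively connected} over $T'$ (i.e.\ have connected geometric fibres), equivalently the set of decompositions of $\sZ\times_T T'$ into relative connected components. All the maps in the statement will be induced by the universal properties; the content is to check they are bijective.

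For part~(1), first I would fix an $S$-scheme $T$. Since $\sE\to S$ is \'etale, a $T$-point of $f_!\pi_0(\sX/\sE)$ is the data of a $T$-point $e\in\sE(T)$ together with a relative connected component of $\sX\times_{\sE,e} T$ over $T$; on the other hand a $T$-point of $\pi_0(f_!\sX/S)$ is simply a relative connected component of $\sX\times_S T$ over $T$. The natural map pushes forward a relative component of $\sX_{e}$ along the \'etale (hence open and closed) morphism $\sX\times_{\sE,e}T\to \sX\times_S T$. To see that this is an isomorphism when $\sX\to\sE$ is universally open, I would note that under this hypothesis the geometric fibres of $\sX\to S$ over a point $t$ are exactly the disjoint unions, indexed by the points of the (discrete, \'etale) fibre $\sE_{\bar t}$, of the corresponding geometric fibres of $\sX\to\sE$; hence every relative connected component of $\sX\times_S T$ lands in a single ``sheet'' $\sX\times_{\sE,e}T$ for a well-defined section $e$, which produces the inverse map. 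The openness of $\sX\to\sE$ is what guarantees that a relatively connected piece of $\sX/S$, which is open and closed, is also open and closed inside a single sheet, so that the recovered $e$ varies algebraically; I would make this precise using that $\pi_0$ commutes with arbitrary base change (Lemma~\ref{lemma:base change for pi0}) to reduce to geometric points.

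For part~(2), the hypothesis is that $h:\sX\to\sE$ is a universal submersion with connected geometric fibres. I claim $h$ then induces a bijection on relative connected components over any $S$-scheme $T$. Indeed, after base change to $T$ (again using Lemma~\ref{lemma:base change for pi0} and the fact that ``universal submersion with connected fibres'' is stable under base change), it suffices to treat $T=S$: if $\sX=\bigsqcup_\alpha \sX_\alpha$ is the decomposition into relative connected components over $S$, then since $h$ is surjective and has connected fibres, the images $h(\sX_\alpha)$ are open (as $h$ is submersive and $\sX_\alpha$ is open and closed, its image is open and closed) and partition $\sE$; conversely each $\sX_\alpha$ is the full preimage $h^{-1}(h(\sX_\alpha))$, because a geometric fibre of $h$ is connected and so cannot meet two different $\sX_\alpha$. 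Thus $\alpha\mapsto h(\sX_\alpha)$ is the desired bijection between relative components of $\sX/S$ and of $\sE/S$, functorially in $S$, giving the isomorphism $\pi_0(\sX/S)\isomto\pi_0(\sE/S)$. (Separability of $\sE\to S$, needed for $\pi_0(\sE/S)$ to make sense, follows from that of $\sX\to S$ together with surjectivity of $h$.)

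The main obstacle I anticipate is part~(1): one has to argue cleanly that the assignment ``relative component $\mapsto$ the section $e\in\sE(T)$ it sits over'' is a morphism of functors and not just a pointwise construction. The safest route is to observe that a relative connected component $V\subseteq\sX\times_S T$ is in particular open and closed, hence its image in $\sE\times_S T$ under the \'etale map $h_T$ is open and closed; under the universal openness hypothesis this image is moreover the image of a \emph{single} fibre-point at each $t\in T$, so it is a section $T\to\sE\times_S T$, i.e.\ an element $e\in\sE(T)$, and $V$ is a relative component of $\sX\times_{\sE,e}T$. Verifying universal openness is exactly what lets us avoid a point-set pathology where the image of $V$ could be a non-section open-and-closed subset of an \'etale $\sE\times_S T$; once that is in hand, compatibility with further base change $T''\to T$ is automatic from Lemma~\ref{lemma:base change for pi0}.
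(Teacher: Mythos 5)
Your proof of part (1) is correct and follows essentially the same route as the paper: identify $T$-points of both sides as relative connected components, push forward along the open immersion $\sX\times_{\sE,e}T\to\sX\times_ST$ (note this map is an open immersion that is closed only fibrewise, since $\sE\to S$ need not be separated -- your parenthetical ``\'etale hence open and closed'' is imprecise, but the conclusion you draw from it is the right one), and invert by using universal openness to see that the image of a relative component in $\sE\times_ST$ is open, \'etale over $T$ with nonempty geometrically connected fibres, hence a section. The paper declines to prove part (2) as it is unused; your sketch for it is sound, provided you first establish $\sX_\alpha=h^{-1}(h(\sX_\alpha))$ from connectedness of the fibres and only then invoke submersiveness to conclude that $h(\sX_\alpha)$ is open and closed.
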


\begin{proof}
Since (2) is easy to prove and not used in the paper, we only prove (1).
Note that if $\sX\to \sE$ is flat, finitely presented and separable,
this follows from Lemma~\ref{lemma:coperf and base restr}. However, here
we assume much less.
The morphism in the statement is constructed as follows.
For each $S$-scheme $T$, a point of $f_!\pi_0(\sX/\sE)$ with
values in $T$ is a pair composed of an $S$-morphism
$u:T\to \sE$ and a $T$-relative connected component
$\sC'\subset \sX\times_{\sE} T$. Since $\sE\to S$ is \'etale,
the map $\sX\times_{\sE} T\to \sX\times_S T$ is an open immersion
globally and a closed immersion in the fibres, showing
that~$\sC\defeq \sC'$ is a $T$-relative connected component of $\sX\times_ST$
i.e. a $T$-valued point of $\pi_0(f_!\sX/S)$. Let us describe
the inverse morphism, assuming $\sX\to \sE$ universally open. Let
$\sC\subset \sX\times_S T$ be a $T$-relative connected component.
By the assumption on $\sX\to \sE$, the image $\sD$ of $\sC$ in
$\sE\times_S T$ is open, hence \'etale over $T$ with nonempty
geometrically connected $T$-fibres. It follows that $\sD\to T$
is an isomorphism. Using its inverse, we obtain a morphism
$T\to \sD\to \sE$ and the
pair $(T\to \sE,\sC)$ is a $T$-point of $f_!\pi_0(\sX/\sE)$.
These constructions are inverse to each other.
\end{proof}

We continue with a description of $\pi_0(X/S)$ in terms of
an atlas. This takes the form of a pushout property which is a
consequence of the right exactness of the functor $\pi_0$,
and will have an important refinement in the context of
stacks in the later Lemma~\ref{lemma:stacky-pushout}.

\begin{lemma} \label{prop:pushout_with_pi_0}
Let $X\to S$ be a flat, finitely presented morphism of
separable algebraic spaces and
let $U\to X$ be an fppf, separable, surjective morphism.
\begin{trivlist}
\itemn{1} Let $R\subset U\times U$ be the fppf equivalence
relation defined by $U\to X$, so that $X$ is identified
with the coequalizer $\coeq(R\toto U)$.
Then we have $\pi_0(X/S)=\coeq(\pi_0(R/S)\toto \pi_0(U/S))$.
\itemn{2} The diagram
\[
\begin{tikzcd}
U \ar[r] \ar[d] & X \ar[d] \\
\pi_0(U/S) \ar[r] & \pi_0(X/S)
\end{tikzcd}
\]
is a pushout in the category of sheaves.
\end{trivlist}
\end{lemma}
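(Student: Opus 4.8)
The plan is to derive both statements from the fact, recalled above, that the functor $\pi_0$ is a left adjoint to the inclusion of the category of \'etale quasi-compact $S$-algebraic spaces into the category $\textbf{Spb}_S$ of flat, finitely presented, separable $S$-algebraic spaces, so that it is right exact (preserves colimits). By the usual limit arguments we may assume $S$ noetherian. First I would check that $R$ and $U$ belong to $\textbf{Spb}_S$: the morphism $U\to S$ is the composite $U\to X\to S$ of flat, finitely presented, separable morphisms, hence is itself such (its fibres are geometrically reduced because $U_s\to X_s$ is flat and finitely presented with geometrically reduced fibres over the geometrically reduced $X_s$), and $R=U\times_X U\to S$ factors as $R\to U\to S$ with $R\to U$ a base change of $U\to X$, so it is flat, finitely presented and separable too. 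In particular $\pi_0(R/S)$, $\pi_0(U/S)$ and $\pi_0(X/S)$ are \'etale quasi-compact $S$-algebraic spaces.

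For (1): since $U\to X$ is faithfully flat and finitely presented, $X=\coeq(R\toto U)$ in the category of fppf sheaves; as $R,U,X$ all lie in the full subcategory $\textbf{Spb}_S$, this is also a coequalizer there. Applying the right exact functor $\pi_0$ gives $\pi_0(X/S)=\coeq\bigl(\pi_0(R/S)\toto\pi_0(U/S)\bigr)$, the coequalizer computed in \'etale quasi-compact $S$-spaces. The remaining point is that this coincides with the coequalizer in the category of all sheaves, i.e. that the sheaf coequalizer is representable by an \'etale quasi-compact $S$-space. To see this, let $V\subset\pi_0(U/S)\times_S\pi_0(U/S)$ be the image of $\pi_0(R/S)$ under the two maps induced by the projections $R\toto U$, and let $\langle V\rangle$ be the equivalence relation it generates; then the sheaf coequalizer is the quotient $\pi_0(U/S)/\langle V\rangle$, and $\langle V\rangle$ is open and closed in $\pi_0(U/S)\times_S\pi_0(U/S)$ by the same finiteness input on open and closed equivalence relations over a noetherian base that is used in the proof of Theorem~\ref{theo_affine_etale_hull} (cf. \cite{Fe19}). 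Granting this, $\pi_0(U/S)/\langle V\rangle$ is an \'etale quasi-compact $S$-space, so it is also the coequalizer computed in that full subcategory, namely $\pi_0(X/S)$. I expect this representability statement to be the main obstacle.

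For (2): I would check the universal property of the pushout in the category of sheaves. For a sheaf $T$, injectivity of $\Hom(\pi_0(X/S),T)\to\Hom(X,T)\times_{\Hom(U,T)}\Hom(\pi_0(U/S),T)$ holds because $\pi_0(U/S)\to\pi_0(X/S)$ is an epimorphism of sheaves (a surjective \'etale morphism is an fppf cover). For surjectivity, let $(g\colon X\to T,\, h\colon\pi_0(U/S)\to T)$ be a compatible pair. Writing $a,b\colon R\to U$ for the two projections and $\rho\colon R\to\pi_0(R/S)$ for the canonical map, functoriality of $\pi_0$ gives $\pi_0(a)\circ\rho=(U\to\pi_0(U/S))\circ a$ and likewise for $b$, whence $h\circ\pi_0(a)\circ\rho=g\circ(U\to X)\circ a=g\circ(U\to X)\circ b=h\circ\pi_0(b)\circ\rho$. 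Since the canonical map of an object of $\textbf{Spb}_S$ to its $\pi_0$ is an fppf epimorphism (\cite{Rom11}), $\rho$ may be cancelled, so $h$ coequalizes $\pi_0(R/S)\toto\pi_0(U/S)$; by (1) it factors through a unique $k\colon\pi_0(X/S)\to T$, and precomposing with the epimorphism $U\to X$ shows $k\circ(X\to\pi_0(X/S))=g$. Alternatively, (2) may be packaged as the formal identity $X\sqcup_U\pi_0(U/S)=\coeq\bigl(\pi_0(R/S)\toto\pi_0(U/S)\bigr)$ in sheaves, obtained by computing the colimit of $R\toto U\to\pi_0(U/S)$ in two stages, combined with (1).
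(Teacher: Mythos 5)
Your part (2) is essentially the paper's own argument: you show that $h$ coequalizes $\pi_0(R/S)\toto\pi_0(U/S)$ by cancelling the sheaf epimorphism $R\to\pi_0(R/S)$, then invoke (1); this is exactly what the paper does (with the added bonus that you spell out injectivity and the compatibility with $g$). For part (1) you take a genuinely different route. The paper never uses the adjunction: it introduces the equivalence relation $\tilde\pi_0(R)$ generated by the image of $\pi_0(R)\to\pi_0(U)\times_S\pi_0(U)$, proves that its formation commutes with fppf surjective refinements of the atlas (reducing the surjectivity of $\pi_0(f^*R)\to f^*\pi_0(R)$ to geometric points of $S$), deduces that the quotient sheaf $\pi_0(U)/\tilde\pi_0(R)$ is independent of the atlas, and concludes by taking $U=X$, $U'=U$. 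Your route --- left adjoints preserve coequalizers, so $\pi_0(X)=\coeq(\pi_0(R)\toto\pi_0(U))$ in $\textbf{Et}_S$, and one then identifies this with the sheaf coequalizer by proving the latter is representable in $\textbf{Et}_S$ --- is cleaner in that it isolates a single geometric input (representability of the quotient), whereas the paper sidesteps representability entirely by exhibiting the quotient sheaf as $\pi_0(X)$ directly and without any noetherian reduction.

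The step you flag as the main obstacle is indeed the one whose justification needs repair, but not for the reason you give. The generated relation $\langle V\rangle$ has no reason to be closed, and Ferrand's finiteness statement (there are finitely many open \emph{and closed} equivalence relations over a noetherian base, used in Theorem~\ref{theo_affine_etale_hull} to bound the set of ``images'') says nothing about a generated relation being open and closed. What you actually need is weaker and is true: $V$ is the image of the morphism $\pi_0(R)\to\pi_0(U)\times_S\pi_0(U)$ between \'etale $S$-spaces, which is \'etale, hence open; the relation $\langle V\rangle$ is a union of images of \'etale maps (the $n$-fold composites of $V$ and its swap, together with the diagonal), hence an open subspace of $\pi_0(U)\times_S\pi_0(U)$. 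Therefore $\langle V\rangle\toto\pi_0(U)$ is an \'etale equivalence relation and the quotient $\pi_0(U)/\langle V\rangle$ is an algebraic space, \'etale over $S$ and quasi-compact since $\pi_0(U)$ surjects onto it; if you want it quasi-separated (so as to land in $\textbf{Et}_S$) you use your noetherian reduction to see that the ascending chain of open composites $V^{\circ n}$ stabilizes. With the argument corrected in this way, your proof of (1) goes through.
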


We warn the reader that
$\pi_0(R/S)\to \pi_0(U/S)\times_S\pi_0(U/S)$
may fail to be injective; e.g. $R$ may be disconnected in
a connected $U$.

\begin{proof}
Throughout, we write $\pi_0(X)$ instead of $\pi_0(X/S)$
and we omit $S$ from fibred products.

\smallskip

\noindent (1) Let $\tilde\pi_0(R)$ denote the
equivalence relation generated
by the image of $\pi_0(R)\to \pi_0(U)\times\pi_0(U)$.
Let us prove that the formation of $\tilde\pi_0(R)$
commutes with fppf surjective refinements $f:U'\to U$.
That is, if $f^*R\subset U'\times U'$ is the preimage of
the equivalence relation $R$ under $U'\times U'\to U\times U$
and $f^*(\tilde\pi_0(R))$ is the preimage of the relation
$\tilde\pi_0(R)$ under
$\pi_0(U')\times \pi_0(U')\to \pi_0(U)\times \pi_0(U)$
then we want to prove that the natural map
\[
\tilde\pi_0(f^*R) \too f^*\tilde\pi_0(R)
\]
is an isomorphism. For this it is enough to prove that
$\pi_0(f^*R)\to f^*\pi_0(R)$ is surjective. Since the spaces
are \'etale, we may assume that $S$ is
the spectrum of an algebraically closed field, and we can
represent each connected component by a point lying on it.
Since $f:U'\to U$ is surjective, a point of
\[
f^*\pi_0(R)=\pi_0(R)\times_{\pi_0(U)\times \pi_0(U)}
\pi_0(U')\times \pi_0(U')
\]
can be represented by a triple
$(r,u_1,u_2)\in R(k)\times U'(k)\times U'(k)$, which is
what we wanted to prove. Since any two atlases for~$X$
have a common refinement, it follows that
the quotient space $\pi_0(U)/\tilde\pi_0(R)$ does not depend
on the choice of $U$ up to a canonical isomorphism, and
taking $U=X$ and $U'=U$ we see that
\[
\pi_0(U)/\tilde\pi_0(R) \simeq \pi_0(X).
\]

\noindent (2) Let $Y$ be a sheaf and let $a:X\to Y$,
$b:\pi_0(U)\to Y$ be maps that coincide on $U$. Denote by
$u:U\to X$ the chosen atlas and $s,t:R\to U$ the
projections. Let $\sigma,\tau$ be the maps
$\pi_0(s),\pi_0(t):\pi_0(U)\to \pi_0(X)$. Using that
$R\to \pi_0(R)$ is an epimorphism of sheaves, from
$aus=aut$ we deduce $b\sigma=b\tau$. Then~(1)
implies that $b$ factors through a map $\pi_0(X)\to Y$.
\end{proof}


\begin{noth}{Completion} \label{noth:completion}
We finish this subsection with a description of $\pi_0(X/S)$
over a complete local base which will be crucial for the proof
of Theorem~\ref{theo:preperfection_algebra}.
Let $S$ be the spectrum of a complete noetherian local ring $R$
with maximal ideal $\mathfrak m$.
For each $n\ge 0$ let $S_n=\Spec R/\mathfrak m^{n+1}$.
By \cite{EGA}~IV.18.5.15, restriction to $S_0$ yields
an equivalence $\FEt/S\simeq\FEt/S_0$ between the categories of
finite \'etale algebras.
In particular, given $X\to S$ flat of finite type and separable,
there exists a unique finite \'etale scheme $\hat{\pi}/S$ restricting
to $\pi_0(X\times_S{S_n}/S_n)$ over each $S_n$. Alternatively, one can
see $\hat{\pi}$ as the algebraization of the formal completion of
$\pi_0(X/S)$, which explains the choice of notation $\hat{\pi}$. As
$\hat\pi$ is finite over $S$, it is a product of complete local rings.
By
\cite{SP19}, Tag~\href{http://stacks.math.columbia.edu/tag/0AQH}{0AQH}
there is a natural morphism of $S$-algebraic spaces
\begin{equation}\label{map_pi_hat}
\psi\colon\hat \pi\to \pi_0(X/S),
\end{equation}
which restricts to an isomorphism over each $S_n$.
\end{noth}

\begin{prop}\label{prop:pushout}
Let $R$ be a complete noetherian ring, $A$ a flat separable $R$-algebra of finite type. Write $X=\Spec A$, $S=\Spec R$, $s$ for the closed point of $S$, and let $V=S\setminus\{s\}$. The commutative diagram of $S$-algebraic spaces
\begin{center}
\begin{tikzcd}
\hat\pi_V \ar[r, hookrightarrow]\ar[d, "\psi_V"] & \hat\pi \ar[d, "\psi"]\\
\pi_V \ar[r, hookrightarrow] & \pi_0(X/S) \\
\end{tikzcd}
\end{center}
is a pushout in the category of fppf sheaves over $S$.
\end{prop}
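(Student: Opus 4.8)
The strategy is to verify the universal property of the pushout by hand, using the Milnor-type patching of a sheaf over $S$ from its restrictions to the open $V$ and to the ``formal neighbourhood'' of $s$. Concretely, let $G$ be an fppf sheaf on $S$ and suppose we are given compatible morphisms $a\colon \pi_0(X/S)\to G$ restricted to $\pi_V\to G$ and $b\colon \hat\pi\to G$ agreeing on $\hat\pi_V$; we must produce a unique $c\colon$ (pushout) $\to G$, equivalently show that the square identifies $\pi_0(X/S)$ with the pushout $\pi_V\sqcup_{\hat\pi_V}\hat\pi$. First I would record the key structural inputs: $\pi_0(X/S)$, $\hat\pi$, $\pi_V$, $\hat\pi_V$ are all étale quasi-compact algebraic spaces over $S$ (by \cite{Rom11}, 2.1.3, since $X\to S$ is separable), $\hat\pi\to S$ is finite étale and is a finite product of spectra of complete local rings, and $\psi$ restricts to an isomorphism over each $S_n=\Spec R/\mathfrak m^{n+1}$ (see \ref{noth:completion}). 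The inclusions $\hat\pi_V\hookrightarrow\hat\pi$ and $\pi_V\hookrightarrow\pi_0(X/S)$ are open immersions with complements lying over the single closed point $s$.

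Next I would reduce to a purely local étale-algebraic statement. Since the formation of $\pi_0$ commutes with étale base change on $S$ (Lemma~\ref{lemma:base change for pi0}) and all four spaces in the square are étale and quasi-compact over $S$, one may describe everything by the finite sets-with-Galois-action obtained by pulling back to strictly henselian local rings. Over the strict henselization at any point of $V$ the square becomes trivially a pushout (the two vertical maps are isomorphisms there, because $\psi_V$ is an isomorphism away from the fibre over $s$: indeed $\pi_V$ is étale over $V$ and $\hat\pi$ agrees with $\pi_0$ on every infinitesimal neighbourhood, so on $V$ the inclusion forces $\hat\pi_V\to\pi_V$ to be an isomorphism). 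The content is therefore entirely at the closed point: one must check that the fibre $\pi_0(X/S)_s$ together with the ``generic'' data over $V$ is reconstructed from the finite étale $\hat\pi$ and its restriction to $V$. Here I would invoke \cite{SP19}, Tag~\href{http://stacks.math.columbia.edu/tag/0AQH}{0AQH} (which already produced the map $\psi$): because $\hat\pi\to\pi_0(X/S)$ is an isomorphism over each $S_n$ and both are étale (hence unramified) over $S$, the map $\psi$ is an open immersion onto the union of those connected components of $\pi_0(X/S)$ that meet the closed fibre, and is an isomorphism onto its image; while $\pi_V\to\pi_0(X/S)$ is an open immersion whose image is the union of components meeting $V$. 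Every connected component of $\pi_0(X/S)$ meets either the closed fibre or $V$ (or both, in which case it meets $V$ and contributes to both images with overlap governed exactly by $\hat\pi_V$), so the two open subspaces cover $\pi_0(X/S)$ and their intersection is $\psi(\hat\pi_V)=\pi_V\cap\psi(\hat\pi)$. A Zariski-open cover of a space by two opens with specified intersection is precisely a pushout diagram of sheaves, which gives the claim.

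Assembling this: given $G$ and the compatible pair $(a,b)$ as above, define $c$ on the open $\pi_V\subset\pi_0(X/S)$ by $a$ and on the open $\psi(\hat\pi)\subset\pi_0(X/S)$ by $b\circ\psi^{-1}$; these agree on the overlap $\pi_V\cap\psi(\hat\pi)=\psi(\hat\pi_V)$ by the compatibility hypothesis, so by the sheaf property on the Zariski (indeed fppf) topology they glue to a unique $c\colon\pi_0(X/S)\to G$, and uniqueness of $c$ follows since the two opens cover. This establishes the universal property and hence the pushout.

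The main obstacle I anticipate is the bookkeeping in the closed-point analysis: one must be careful that a connected component of $\pi_0(X/S)$ which is ``purely infinitesimal'' (supported set-theoretically over $s$ but with nonreduced structure detected only after passing to the $S_n$) is indeed captured by $\hat\pi$ and not double-counted, and symmetrically that no component is missed. The cleanest way around this is to use that $\psi$ is an isomorphism over every $S_n$ together with the finiteness (hence quasi-compactness) of $\hat\pi\to S$: this forces $\psi$ to be an open immersion with image a union of connected components, after which the topological covering argument is routine. A secondary point to handle with care is that the pushout is asserted in fppf sheaves rather than Zariski sheaves, but since all the morphisms in question are open immersions of étale algebraic spaces, a Zariski pushout of this shape is automatically an fppf pushout (open covers are fppf covers and the relevant sheaf conditions coincide), so no extra work is needed there.
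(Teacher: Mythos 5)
There is a genuine gap: the central claim of your argument --- that $\psi\colon\hat\pi\to\pi_0(X/S)$ is an open immersion onto a union of connected components, and in particular that $\psi_V\colon\hat\pi_V\to\pi_V$ is an isomorphism --- is false. The fact that $\psi$ restricts to an isomorphism over every infinitesimal neighbourhood $S_n$ of $s$ controls nothing over $V$: an \'etale morphism of \'etale $S$-spaces that is bijective on the closed fibre can still fail to be a monomorphism over $V$. The paper's own Lemma~\ref{example_1} is a counterexample: for $R=\FF_p[[u]]$ and $A=R[x,y,(x-y)^{-1}]/(xy-u)$ one has $\hat\pi=\Spec(R)\sqcup\Spec(R)$, while $\pi_0(X/S)$ is the non-separated gluing of two copies of $\Spec(R)$ along the generic fibre; here $\psi$ is surjective and $2$-to-$1$ over the generic point, and $\psi_V$ is the $2$-to-$1$ map $\Spec(K)\sqcup\Spec(K)\to\Spec(K)$. (If $\psi_V$ really were an isomorphism, the asserted pushout would simply be $\hat\pi$ itself, which is visibly wrong in this example.) Consequently the ``cover by two Zariski opens'' picture collapses: $\psi(\hat\pi)$ is all of $\pi_0(X/S)$, $\psi$ has no inverse over $V$, and you cannot define the glued map there by $b\circ\psi^{-1}$.

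What survives of your plan is only the observation that $U\defeq\pi_V\sqcup\hat\pi\to\pi_0(X/S)$ is a covering --- but it is an \'etale surjective (fppf) cover, not a Zariski cover by two opens, so the universal property must be verified by descent: given the compatible pair $(a,b)$, one must show it equalizes the two projections $U\times_{\pi_0(X/S)}U\rightrightarrows U$, and the only nontrivial part is $\hat\pi\times_{\pi_0(X/S)}\hat\pi\rightrightarrows\hat\pi$. This is where the actual content of the proposition lies, and your proposal contains no substitute for it. The paper handles it via the structure theory of separated quasi-finite schemes over henselian local rings: $\hat\pi\times_{\pi_0(X/S)}\hat\pi$ decomposes as $P^f\sqcup P'$ with $P^f\to\hat\pi$ finite \'etale and $P'$ supported over $V$; the isomorphism of $\psi$ on the closed fibre identifies $P^f$ with the diagonal copy of $\hat\pi$ (where the equalizing condition is automatic), and on $P'$ the condition follows from the compatibility of $a$ with $b\circ\psi_V$. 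Some argument of this kind is indispensable precisely because $\psi$ need not be a monomorphism.
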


\begin{proof}
In the proof we write $\pi\defeq \pi_0(X/S)$.
In order to prove the claim, it suffices to show that any diagram of solid arrows
\begin{center}
\begin{tikzcd}
\hat\pi_V \ar[r]\ar[d, "\psi_V"] & \hat\pi \ar[d, "\psi"]
\ar[ddr, bend left=25,"a"]\\
\pi_V \ar[r] \ar[drr, bend right=25, "b"']& \pi \ar[dr, dashed]\\
&&Z
\end{tikzcd}
\end{center}
where $Z$ is an $S$-sheaf, admits a unique dashed arrow making the diagram commute.

First of all, notice that $\psi\colon\hat\pi\to\pi$ is \'etale; writing $U=\pi_V\sqcup \hat \pi$, it follows that $U\to \pi$ is faithfully flat of finite presentation, hence it is a coequalizer for $U\times_\pi U\to U$. Therefore, in order to obtain a unique dashed arrow, it suffices to check that $a\circ p_1=a\circ p_2$, where $p_1,p_2$ are the projections $\hat\pi\times_\pi\hat\pi\to\hat\pi$.

The $S$-scheme $\hat\pi$ is finite \'etale, hence the map $\psi\colon\hat\pi\to \pi$ is separated and quasi-finite, and so is also the base change $p_1\colon \hat\pi\times_\pi\hat\pi\to \hat\pi$. Moreover, we know that $\hat\pi$ is a finite disjoint union of spectra of completed local rings; by the classification of separated quasi-finite schemes over henselian local rings, $\hat\pi\times_\pi\hat\pi$ decomposes into a disjoint union $P^f\sqcup P'$ such that $p_1\colon P^f\to \hat \pi$ is finite (and \'etale), and $P'=P'_V$ has empty closed fibre. One obtains a similar decomposition for the map $p_2$, let us say $\hat\pi\times_\pi\hat\pi=Q^f\sqcup Q'$. However, the compositions $\hat\pi\times_\pi\hat\pi \xrightarrow{p_i} \hat\pi\to\pi\to S$ are the same map for $i=1,2$, and are both quasi-finite, separated; so both $P^f$ and $Q^f$ are equal to the finite part of the composition, and we find $P^f=Q^f$.

The restriction of $\psi$ to the closed fibre, $\psi_s\colon \hat\pi_s\to \pi_s$, is an isomorphism by construction of $\hat\pi$, and therefore so is $P^f_s=(\hat\pi\times_\pi\hat\pi)_s\xrightarrow{p_1} \hat\pi_s.$ The isomorphism extends uniquely to an isomorphism $P^f\to \hat \pi$.

Consider the diagram of solid arrows
\begin{center}
\begin{tikzcd}
\hat\pi\sqcup P' \ar[r, "p_2"]\ar[d, "p_1"] & \hat \pi \ar[d, "a"] \\
\hat\pi \ar[r, "a"] &  Z
\end{tikzcd}
\end{center}
where we have identified $P^f$ with $\hat \pi$. We want to show that it is commutative.

For $i=1,2$, the morphism $p_i$ is the identity on $\hat \pi$, so we really only need to show that $a\circ p_1$ agrees with $a\circ p_2$ on $P'$. As $P'$ is contained in $(\hat\pi\times_\pi\hat\pi)_V$, we have $a\circ (p_1)_V=b\circ \psi_V\circ (p_1)_V=b\circ \psi_V\circ (p_2)_V=a\circ (p_2)_V$ and the proof is complete.
\end{proof}

\section{Perfection of algebras}
\label{section:perf_of_algebras}

The commutative algebra developed in this section has independent
interest but is also fruitfully introduced with an eye towards
the geometric applications of the next section. Let
$X\to S$ be a flat, finitely presented morphism of algebraic
spaces of characteristic~$p$. In order to study the coperfection
of $X$ in the category of $S$-algebraic spaces, we will use the
\'etale algebraic spaces $\pi_0(X/S)$ and $\pi^a(X/S)$ (assuming
they exist). Since
\'etale implies relatively perfect, the morphism $X\to\pi_0(X/S)$
extends to the direct Frobenius system and we have a diagram:
\[
\left(X\stackrel{\Frob_0}{\too} X^{p/S}
\stackrel{\Frob_1}{\too} \dots\right) \tooo \pi_0(X/S)
\tooo \pi^a(X/S).
\]
The present section is devoted to the case where $S=\Spec(R)$
and $X=\Spec(A)$. The main question is whether there exists a
{\em perfection functor}, right adjoint to the inclusion of
perfect $R$-algebras into all $R$-algebras. In such generality
we do not know if such perfection exists. At least an obvious
approximation should be the {\em preperfection}:
\[
A^{p^{\infty}/R}\defeq \lim A^{p^i/R}=
\lim\,\big(\!\!
\begin{tikzcd}
\cdots \ A^{p^2/R} \ar[r, "\Frob_A"] &
A^{p/R} \ar[r, "\Frob_A"] & A
\end{tikzcd}
\!\!\big).
\]
The above diagram of spaces provides a diagram of algebras
\[
A^{\et/R} \too \cO(\pi_0(A/R)) \too A^{p^\infty/R}
\]
where $A^{\et/R}=\cO(\pi^a(A/R))$ is the largest \'etale
subalgebra of $A$, see Definition~\ref{defi:etale affine hull}.
Our goal is roughly to find as many situations as possible
where both maps above are isomorphisms.

We start in \S~\ref{subsection:base change} with preliminary
material on base change in the formation of the preperfection.
Then we prove that both maps above are indeed isomorphisms
when $R$ is artinian and $R\to A$ is of finite type, see
\S~\ref{subsection:artinian base}, or $R$ is regular and
$R\to A$ is of finite type and separable, see
\S~\ref{subsection:regular base}. Over a general noetherian ring,
only the map $\cO(\pi_0(A/R)) \to A^{p^\infty/R}$ is an
isomorphism, see \S~\ref{subsection:noetherian base}. This is
already remarkable, given the poor properties of both algebras:
in general $\cO(\pi_0(A/R))$ is not \'etale and $A^{p^\infty/R}$
is not perfect, even when $R\to A$ is flat, of finite type and
separable. One may expect that after iterating the preperfection
functor $(-)^{p^\infty/R}$ a finite (sufficiently high) number of
times, one reaches a perfect $R$-algebra. With the hope that this
might be true, we establish in \S~\ref{subsection:regular base} some
finiteness properties of $A^{p^\infty/R}$. We conclude the section
with counterexamples.

\subsection{Base change in preperfection}\label{subsection:base change}

For each morphism of $\FF_p$-algebras $R\to A$ and each
base change morphism $R\to R'$ we have a natural base change map
for preperfection:
\[
\phi=\phi_{R,R',A}: A^{p^\infty/R}\otimes_R R' \too (A\otimes_RR')^{p^\infty/R'}.
\]
It is important to understand this map for at least two reasons.
The first is that the study of $A^{p^\infty/R}$ with the usual tools
(localization, completion on $R$...) involves many base changes.
The second is that the base change map along Frobenius $\Frob:R\to R$
controls the success or failure of $A^{p^\infty/R}$ to be perfect;
we elaborate on this in Remark~\ref{rema:Frobenius of preperfection}.
Before stating the first lemma devoted to properties of $\phi$,
we recall a result of T. Dumitrescu.

\begin{theo}
\label{theo:Frobenius of reduced is injective}
Let $R\to A$ be a morphism of noetherian commutative rings.
Let $\Frob_{A/R}:A^{p/R}\to A$ be the relative Frobenius
morphism. Then the following are equivalent:
\begin{itemize}
\item[\rm (i)] $R\to A$ is flat and separable,
\item[\rm (ii)] $\Frob_{A/R}$ is injective and its cokernel
is a flat $R$-module.
\end{itemize}
\end{theo}

\begin{proof}
See \cite{Du95}, Theorem~3.
\end{proof}

\begin{rema}
If we do not assume that $R$ and $A$ are noetherian but
$R\to A$ is of finite presentation, then (i) $\To$ (ii) is
true. Indeed $R\to A$ is the base change of a map $R_0\to A_0$
along a morphism $R_0\to R$ with $R_0$ noetherian and we may
choose $R_0\to A_0$ flat and
separable, see \cite{EGA}~IV$_3$, 11.2.7 and 12.1.1(vii).
Then by the noetherian case, it follows that $\Frob_{A_0/R_0}$
is injective with $R_0$-flat cokernel. By base
change $\Frob_{A/R}$ is injective with $R$-flat cokernel.
\end{rema}

\begin{lemma}\label{lemma:properties of base change map}
The base change map
$\phi_{R,R',A}:A^{p^\infty/R}\otimes_R R' \too (A\otimes_RR')^{p^\infty/R'}$
is:
\begin{trivlist}
\itemn{1} an isomorphism if $R\to R'$ is finite locally
free.
\itemn{2} injective in each of the following cases:
\begin{itemize}
\item[\rm (i)] $R\to R'$ is projective.
\item[\rm (ii)] $R\to R'$ is flat and $R\to A$ is flat, finitely
presented, with reduced geometric fibres.
\item[\rm (iii)] $R'=\colim R$ is the absolute coperfection
of a ring $R$ such that $\Frob:R\to R$ is projective.
\end{itemize}
\end{trivlist}
\end{lemma}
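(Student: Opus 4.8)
The plan is to handle the isomorphism claim and the injectivity claims separately, exploiting the fact that preperfection is a \emph{limit} and so commutes with anything that is itself a limit or a finite (hence limit-preserving) operation.

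\textbf{Part (1).} Since $A^{p^\infty/R}=\lim_i A^{p^i/R}$ is a sequential limit and $R\to R'$ is finite locally free, the functor $-\otimes_R R'$ preserves this limit: indeed $R'$ is a dualizable (equivalently, finitely generated projective) $R$-module, so $-\otimes_R R'\cong \underline{\Hom}_R(R'^\vee,-)$ commutes with all limits. Hence $A^{p^\infty/R}\otimes_R R'\isomto \lim_i(A^{p^i/R}\otimes_R R')$. It remains to identify $A^{p^i/R}\otimes_R R'$ with $(A\otimes_R R')^{p^i/R'}$ compatibly with the transition maps. This is the base-change compatibility of Frobenius twists: $A^{p^i/R}=A\otimes_{R,\Frob_R^i}R$, so
\[
A^{p^i/R}\otimes_R R' \;=\; A\otimes_{R,\Frob_R^i}R\otimes_R R'\;=\;A\otimes_{R,\Frob_R^i}R'\;=\;(A\otimes_R R')\otimes_{R',\Frob_{R'}^i}R'\;=\;(A\otimes_R R')^{p^i/R'},
\]
using that $\Frob_R^i$ and $R\to R'$ fit into the commutative square defining $\Frob_{R'}^i$ (the Frobenius on $R'$ is semilinear over the Frobenius on $R$). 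One checks this identification carries $\Frob_A\otimes \id$ to $\Frob_{A\otimes_R R'}$, so the two inverse systems are isomorphic and taking limits gives that $\phi_{R,R',A}$ is an isomorphism.

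\textbf{Part (2), case (i).} If $R\to R'$ is projective (not necessarily finitely generated), write $R'$ as a direct summand of a free module $R^{(I)}$. Tensoring the limit $A^{p^\infty/R}$ with $R^{(I)}$ does \emph{not} commute with the limit in general, but there is always a natural map, and the point is injectivity. Consider the commutative square comparing $\phi$ for $R'$ with $\phi$ for the complementary summand $R''$; since $R'\oplus R''\cong R^{(I)}$, it suffices to show $\phi_{R,R^{(I)},A}$ is injective, and this reduces (by commuting tensor with arbitrary direct sums, and the fact that a filtered/arbitrary direct sum of the inclusions $A^{p^\infty/R}\hookrightarrow A$ remains injective — preperfection is a \emph{sub}object of $A$ since the transition maps $\Frob_A$ and hence $A^{p^\infty/R}\to A$ are injective on the limit) to the free case, where $A^{p^\infty/R}\otimes_R R^{(I)}=(A^{p^\infty/R})^{(I)}\hookrightarrow A^{(I)}$, and this visibly factors the map into $(A\otimes_R R^{(I)})^{p^\infty/R^{(I)}}\hookrightarrow A^{(I)}$. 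So $\phi$ is injective.

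\textbf{Part (2), case (ii).} Here $R\to R'$ is flat and $R\to A$ has reduced geometric fibres. By Theorem~\ref{theo:Frobenius of reduced is injective} (and the Remark following it, to cover the non-noetherian finitely presented case), each relative Frobenius $\Frob_{A^{p^i/R}/R}\colon A^{p^{i+1}/R}\to A^{p^i/R}$ is injective with $R$-flat cokernel; hence so is the absolute Frobenius $\Frob_{A}\colon A^{p^{i+1}/R}\to A^{p^i/R}$ appearing in the inverse system, up to the identification with $\Frob_{A/R}$ after a flat base change (the absolute and relative Frobenii differ by the base change along $\Frob_R$, which is flat on the relevant twist). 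Thus $A^{p^\infty/R}=\bigcap_i \mathrm{im}(A^{p^i/R}\to A)$ is an intersection of $R$-submodules whose successive quotients are $R$-flat, which forces $A/A^{p^\infty/R}$ to be $R$-flat (as a filtered colimit of flat modules). Flatness of $A/A^{p^\infty/R}$ gives a short exact sequence $0\to A^{p^\infty/R}\to A\to A/A^{p^\infty/R}\to 0$ that stays exact after $-\otimes_R R'$, so $A^{p^\infty/R}\otimes_R R'\hookrightarrow A\otimes_R R'$. Since the same flat-cokernel statement holds over $R'$ (base change preserves "flat, finitely presented, geometrically reduced fibres"), $(A\otimes_R R')^{p^\infty/R'}=\bigcap_i\mathrm{im}((A\otimes_R R')^{p^i/R'}\to A\otimes_R R')$ as well, and the map $\phi$ is the natural map between these, realized inside $A\otimes_R R'$; it is therefore injective.

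\textbf{Part (2), case (iii).} Take $R'=R^{\copf}=\colim(R\xrightarrow{\Frob_R}R\xrightarrow{\Frob_R}\cdots)$ with $\Frob_R$ projective. Then $R\to R'$ is a filtered colimit of projective (hence faithfully flat, since projective + surjective-on-spectra along a universal homeomorphism... more precisely each $\Frob_R$ is injective as $R\to R'$ is injective on the colimit when $R$ is reduced — but here we do not assume reduced, so instead) of \emph{projective} ring maps, hence is itself a flat ring map that is a filtered colimit of projective ones. By case (i), each $\phi_{R,R,A}$ along $\Frob_R\colon R\to R$ is injective; by compatibility of $\phi$ with composition of base changes and with filtered colimits (preperfection, being a countable limit, commutes with the filtered colimit defining $R'$ by a cofinality/Mittag-Leffler argument, or one simply observes the maps are all injective and a filtered colimit of injections is injective), $\phi_{R,R',A}$ is injective. \emph{The main obstacle} I anticipate is precisely this last point: proving that preperfection commutes with the filtered colimit $R'=\colim R$ well enough to reduce to case (i) — one cannot naively interchange $\lim_i$ and $\colim$, so the honest argument is to show each finite-stage base change map is injective via (i) and then pass to the colimit using that filtered colimits are exact; care is needed that the relevant transition maps in the colimit are compatible with the ones in the preperfection tower, which is where the projectivity of $\Frob_R$ does the real work.

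$\square$
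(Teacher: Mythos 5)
Your Part (1) is fine and matches the paper's argument in substance (the paper phrases finite local freeness as the statement that $(\prod_i A^{p^i/R})\otimes_R M\to\prod_i(A^{p^i/R}\otimes_R M)$ is an isomorphism for $M$ free of finite rank, plus a direct-summand argument; your dualizability formulation is equivalent). The other parts have genuine gaps. In case (2.i) you ground everything on the claim that ``the transition maps $\Frob_A$ and hence $A^{p^\infty/R}\to A$ are injective''. But case (i) places \emph{no} hypothesis on $R\to A$, and the relative Frobenius of an arbitrary algebra is not injective (take $A=R[x]/(x^p)$: relative Frobenius kills $x$). So the preperfection need not be a subobject of $A$ in this generality, and your reduction to $(A^{p^\infty/R})^{(I)}\hookrightarrow A^{(I)}$ collapses. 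The paper's fix is to embed $\lim_i A^{p^i/R}$ into $\prod_i A^{p^i/R}$ (always injective, no hypothesis needed), tensor with the flat module $M=R'$, and use that $(\prod_i A^{p^i/R})\otimes_R M\to \prod_i (A^{p^i/R}\otimes_R M)$ is injective for $M$ free, then pass to direct summands for $M$ projective; you should replace your target $A$ by this product.

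In (2.ii) your intermediate claim that $A/A^{p^\infty/R}$ is flat ``as a filtered colimit of flat modules'' is wrong: $A/\bigcap_i M_i$ is not $\colim_i A/M_i$ (that colimit is the quotient by the union, not the intersection). Fortunately the step is unnecessary: here $R\to A$ \emph{is} flat and separable, so Theorem~\ref{theo:Frobenius of reduced is injective} gives $A^{p^\infty/R}\hookrightarrow A^{p^j/R}$ for each $j$, and flatness of $R\to R'$ alone preserves this injection after tensoring; composing with the factorization of $\phi$ through $(A\otimes_R R')^{p^\infty/R'}$ gives injectivity, which is exactly the paper's (shorter) argument. Finally, (2.iii) is a plan rather than a proof: you correctly reduce to the finite stages $R^{p^{-j}}$ and invoke (i), and you correctly flag the interchange of $\lim_i$ and $\colim_j$ as the obstacle, but you do not resolve it. The missing ingredient, which the paper supplies, is that projectivity of $\Frob_R$ forces it to be \emph{faithfully} flat (flat plus surjective on spectra), hence the transition maps $R^{p^{-j}}\to R^{p^{-(j+1)}}$ are universally injective; this yields injections $A^{p^i/R}\otimes R^{p^{-j}}\hookrightarrow \colim_j(A^{p^i/R}\otimes R^{p^{-j}})$ for each $i$, and the injectivity is then threaded through $\lim_i$ and $\colim_j$ by a chain of explicit injections ending with an application of (2.i). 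Without this, your colimit argument does not close.
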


\begin{proof}
Note that since
$(A\otimes_RR')\otimes_{R',\Frob^i}R'=A^{p^i/R}\otimes_RR'$,
the map $\phi_{R,R',A}$ is just a special case for the $R$-module
$M\defeq R'$ of the map $\phi_{R,M,A}$ that appears as the upper
horizontal row in the following commutative diagram:
\begin{center}
\begin{tikzcd}[column sep=large]
(\lim A^{p^i/R})\otimes_R M \ar[r,"\phi_{R,M,A}"] \ar[d] &
\lim (A^{p^i/R}\otimes_R M) \ar[d] \\
\left(\prod_{i\ge 0} A^{p^i/R}\right)\otimes_R M \ar[r,"\psi_{R,M,A}"] &
\prod_{i\ge 0} (A^{p^i/R}\otimes_R M).
\end{tikzcd}
\end{center}
In the sequel we assume that $M$ is flat, so the left-hand vertical
map is injective. If $M$ is free, resp. free of finite rank, then
$\psi_{R,M,A}$ is injective, resp. an isomorphism. It follows that
also $\phi_{R,M,A}$ is injective, resp.
an isomorphism. If~$M$ is projective, one reaches the same conclusions
by embedding it in a free module, resp. a free module of finite rank,
and using the facts that $\phi_{R,M,A}$ and $\psi_{R,M,A}$ are
additive in~$M$. This settles cases (1) and (2.i).

In case (2.ii), by Dumitrescu's
theorem~\ref{theo:Frobenius of reduced is injective} all the maps
$A^{p^i/R}\to A^{p^{i+1}/R}$ are injective; it follows that
$\lim A^{p^i/R}\to A^{p^j/R}$ is injective for each fixed $j$.
By flatness of $R\to R'$ the tensored map
$(\lim A^{p^i/R})\otimes_R R'\to A^{p^j/R}\otimes_R R'$ is injective.
Therefore $\phi_{R,R',A}$ is also injective.

In case (2.iii) we can write the coperfection as
$R'=\colim R^{p^{-j}}$. Since the absolute Frobenius of $R$ is
projective, it is in fact faithfully flat. It follows that
the maps $R^{p^{-j}}\to R^{p^{-(j+1)}}$ are faithfully flat,
hence universally injective. Thus for each $i,j$ the map
\[
A^{p^i/R}\otimes R^{p^{-j}}\too A^{p^i/R}\otimes R^{p^{-(j+1)}}
\]
is injective. Then for each $i$
\[
A^{p^i/R}\otimes R^{p^{-j}}\too
\underset{j}{\colim}\, A^{p^i/R}\otimes R^{p^{-(j+1)}}
\]
is injective. Taking limits
\[
\lim_i\,(A^{p^i/R}\otimes R^{p^{-j}})\too
\lim_i \underset{j}{\colim}\,A^{p^i/R}\otimes R^{p^{-(j+1)}}
\]
is injective, which implies that
\[
\underset{j}\colim\lim_i\,(A^{p^i/R}\otimes R^{p^{-j}})\too
\lim_i \underset{j}{\colim}\,A^{p^i/R}\otimes R^{p^{-(j+1)}}
=\lim_iA^{p^i/R}\otimes R'
\]
is injective. Since also by (2.i) the map
\[
(\lim_iA^{p^i/R})\otimes R'=
\underset{j}\colim\,(\lim_iA^{p^i/R})\otimes R^{p^{-j}}\too
\underset{j}\colim\lim_i\,(A^{p^i/R}\otimes R^{p^{-j}})
\]
is injective, by composition we obtain the result.
\end{proof}

\begin{remas} \label{rema:Frobenius of preperfection}
(1) Let $R\to A$ be a map of rings of characteristic $p>0$.
When inquiring whether the preperfection $A^{p^{\infty}/R}$ is
perfect, we are led to ask if the Frobenius of the preperfection
(``Frobenius of the limit'')
is an isomorphism. In general it is not; an example is given in
in~\ref{example_non_perfect}. In contrast, the morphism
obtained as the limit of the Frobenius maps of the individual rings
of the system (``the limit of Frobenius'')
{\em is} an isomorphism: it is essentially a shift
by one in the indices, which is invisible in the infinite system.
In fact, ``Frobenius of the limit'' and
``the limit of Frobenius'' are the two edges of a
commutative triangle whose third edge, the base change map in
preperfection, serves to compare them:
\[
\begin{tikzcd}[column sep=70,row sep=10]
A^{p^{\infty}/R}\otimes_{R,\Frob}R \ar[dd,"\phi_{R,R,A}"']
\ar[rd,bend left=10,"\Frob_{A^{p^{\infty}}/R}",pos=0.4] & \\
& A^{p^{\infty}/R} \\
(A\otimes_{R,\Frob}R)^{p^{\infty}/R}
\ar[ru,bend right=10,"\lim \Frob"',pos=0.4] &
\end{tikzcd}
\]
Since $\lim \Frob$ is an isomorphism, we see that $A^{p^{\infty}/R}$
is a perfect $R$-algebra if and only if the base change
map $\phi_{R,R,A}$ is an isomorphism. According to
Lemma~\ref{lemma:properties of base change map}(1), this happens
when $R$ is regular and $\Frob$-finite, for then absolute Frobenius
is finite locally free (see \cite{Ku69}).

\smallskip

\noindent (2)
In case (2.ii), it will be a consequence of
Theorem~\ref{theo:coperfection_spaces} that the base change map
is in fact an isomorphism.

\smallskip

\noindent (3)
Here is an example where the base change map is not surjective.
Let $k$ be a field of characteristic $p$ and $k'$ an
infinite-dimensional field extension. Let
\[
A=k[\eps_0,\eps_1,\dots]/(\eps_0^p,\eps_{i+1}^p-\eps_i)
\]
and $A'=A\otimes_kk'$. Let $t_0,t_1,\dots$ be an infinite family
of elements of $k'$ that is $k$-linearly independent.
Let $x'_i=\eps_0t_i+\eps_1t_{i-1}+\dots+\eps_it_0 \in (A')^{p^i/k'}$.
Then $\Frob_{A'/k'}(x'_{i+1})=x'_i$ so $x'=(x'_i)$ is an element of
$(A')^{p^\infty/k'}$ which obviously does not come from
$A^{p^\infty/k}\otimes k'$.
\end{remas}

We end this section with a case where preperfection commutes with
base change; since it is not used in the paper, we omit the proof.

\begin{lemma}\label{lemma_base_Zariski}
Let $A$ be an $R$-algebra, flat of finite presentation, such that the induced morphism $\Spec A\to\Spec R$ has geometrically reduced fibres. Let $f\in R$ be a non-zero divisor, with $R/fR$ reduced. The natural map $\phi\colon A^{p^\infty/R}\otimes_R R_f \to (A\otimes_RR_f)^{p^\infty/R_f}$ is an isomorphism.
\hfill $\square$
\end{lemma}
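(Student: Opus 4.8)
Since $R\to A$ is flat, finitely presented and has geometrically reduced fibres, Dumitrescu's Theorem~\ref{theo:Frobenius of reduced is injective} (in the finitely presented form noted just after it) applies, so by Lemma~\ref{lemma:properties of base change map}(2.ii) the map $\phi$ is injective; only surjectivity is at issue. The plan is to make both sides of $\phi$ completely explicit. Because the transition maps $\Frob_i\colon A^{p^{i+1}/R}\to A^{p^i/R}$ are injective, the preperfection $A^{p^\infty/R}$ is canonically the intersection $\bigcap_{i\ge 0}M_i$ inside $A$, where $M_i\subseteq A$ is the image of the $i$-fold iterated relative Frobenius $A^{p^i/R}\to A$; choosing a presentation $A=R[y_1,\dots,y_d]/J$ one checks directly that $M_i=R[y_1^{p^i},\dots,y_d^{p^i}]$, a decreasing chain of $R$-subalgebras of $A$. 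The same description applied to $R_f\to A_f$, together with $M_i\otimes_RR_f=R_f[y_1^{p^i},\dots,y_d^{p^i}]$, shows that $(A\otimes_RR_f)^{p^\infty/R_f}=\bigcap_i(M_i\otimes_RR_f)$ inside $A_f$, and that under these identifications $\phi$ becomes the inclusion $(\bigcap_iM_i)\otimes_RR_f\hookrightarrow\bigcap_i(M_i\otimes_RR_f)$.

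Next I would reduce surjectivity to a statement about the $M_i$ alone. Since $f$ is a non-zero divisor of $R$ and $A$ is $R$-flat, $f$ is a non-zero divisor of $A$, whence $A\hookrightarrow A_f$, and for $x\in A$ one has $x/1\in M_i\otimes_RR_f$ (viewed inside $A_f$) if and only if $f^{k}x\in M_i$ for some $k\ge 0$. Hence it suffices to prove that each $M_i$ is \emph{$f$-saturated} in $A$, i.e. that $f^{k}x\in M_i$ forces $x\in M_i$: granting this, $\bigcap_i(M_i\otimes_RR_f)$ consists exactly of the elements $x/f^n$ with $x\in\bigcap_iM_i$, which is precisely the image of $\phi$. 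This $f$-saturation is the crux of the argument, and I expect it to be the main obstacle, since it is here that the hypotheses ``$R/fR$ reduced'' and ``geometrically reduced fibres'' are indispensable. Indeed, $R/fR$ reduced together with flatness and geometric reducedness of the fibres forces $A/fA$ to be reduced, and the point is to leverage this to show $M_i\cap fA=fM_i$ (equivalent to $f$-saturation once $f$ is a non-zero divisor of $A$); without geometrically reduced fibres the statement already fails, as one sees by taking $A=R[y]/(y^2-f)$ with $p$ odd, where the nilpotents appearing in the fibres over $V(f)$ force $A^{p^\infty/R}=R$ while $A_f$ is already \'etale over $R_f$. Concretely I would establish the saturation by first reducing to $R$ reduced (using $R/fR$ reduced and $f$ a non-zero divisor), then verifying it over the generic points of $R$, where $A$ becomes a reduced finite-type algebra over a field and $f$-saturation of $R[y_j^{p^i}]$ is a direct degree computation, and finally propagating saturation from the generic fibres to all of $A$ by means of the reducedness of $A/fA$.

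Alternatively, and in fact in greater generality (dropping the hypotheses on $f$ altogether), the lemma follows from the main results of the paper: by Theorem~\ref{theo:preperfection_algebra} one has $A^{p^\infty/R}\isomto\cO(\pi_0(\Spec A/\Spec R))$, and likewise for $R_f\to A_f$; when $R$ is noetherian $\pi_0(\Spec A/\Spec R)$ is a quasi-compact, quasi-separated \'etale algebraic space over $\Spec R$, so the formation of global sections commutes with the flat base change $\Spec R_f\to\Spec R$, while $\pi_0$ itself commutes with that base change by Lemma~\ref{lemma:base change for pi0}; the general case follows by a standard limit argument reducing $R\to A$ and $f$ to the noetherian setting. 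This is essentially the content of Remark~\ref{rema:Frobenius of preperfection}(2), but it relies on Theorem~\ref{theo:coperfection_spaces}, which is proved only later, so for a proof that stays within the present section the $f$-saturation route above is the one to pursue.
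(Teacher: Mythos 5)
The paper itself omits the proof of this lemma (it is stated with ``since it is not used in the paper, we omit the proof''), so there is no official argument to compare against; I am assessing your proposal on its own merits. Your architecture is sound: injectivity from Lemma~\ref{lemma:properties of base change map}(2.ii); the identification of $A^{p^\infty/R}$ with $\bigcap_i M_i$ inside $A$, where $M_i$ is the image of the $i$-fold relative Frobenius (injectivity of the transition maps coming from Dumitrescu applied to each Frobenius twist of $A$, which is again flat, finitely presented and separable); the analogous identification after inverting $f$; and the reduction of surjectivity to the $f$-saturation of each $M_i$ in $A$. The counterexample $R[y]/(y^2-f)$ is also correct and well chosen. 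The gap sits exactly where you predicted: your sketched route to $f$-saturation does not work as described. Since $f$ is a non-zero divisor of a (reduced) ring $R$, it lies in no minimal prime, so $f$ is invertible at every generic point of $\Spec R$ and the ``verification over the generic points'' is vacuous; the entire burden then falls on the unspecified ``propagation'' step, and reducedness of $A/fA$ by itself says nothing about whether $M_i/(M_i\cap fA)$ injects into $A/fA$.

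The repair is already in the tool you quoted. Dumitrescu's Theorem~\ref{theo:Frobenius of reduced is injective} (in the finitely presented form noted after it) gives not just injectivity of $\Frob_{A/R}$ but $R$-flatness of its cokernel. Applying this to each twist $A^{p^i/R}$ and composing, $C_i\defeq A/M_i$ is an iterated extension of flat $R$-modules, hence $R$-flat; therefore $f$, a non-zero divisor on $R$, is a non-zero divisor on $C_i$, and $fx\in M_i$ forces $x\in M_i$. This closes your intersection argument. Two consequences worth noting: first, contrary to your assertion that both hypotheses are ``indispensable'' for the saturation, the hypothesis that $R/fR$ be reduced is not used at all in this argument (nor, as far as I can see, anywhere else in the proof) --- only ``$f$ a non-zero divisor'' and geometric reducedness of the fibres enter. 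Second, your alternative route via Theorem~\ref{theo:preperfection_algebra} and base change of $\pi_0$ is a legitimate observation but, as you say, is both logically out of order for this section and requires a noetherian reduction, so the direct argument above is the one to write down.
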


\subsection{Perfection over artinian rings}
\label{subsection:artinian base}

In this subsection we consider the case where $R$ is an artinian
ring. For such a ring, Theorem~\ref{theo_affine_etale_hull}
implies that any flat, finitely generated algebra $R\to A$
has a largest \'etale subalgebra $A^{\et}$. Below we prove that
the natural map $A^{\et}\to A^{p^\infty}$ to the preperfection
is an isomorphism. In particular, the preperfection
is perfect, hence a perfection. We point out that in this special
situation the separability of $R\to A$ is not needed.

\begin{theo}\label{theorem:artinian base case}
Let $R$ be an artinian local ring of characteristic $p$,
and let $A$ be a flat $R$-algebra of finite type. Then the
maps $A^{\et}\to \cO(\pi_0(A)) \to A^{p^\infty}$ are isomorphisms.
\end{theo}

\begin{proof}
It follows from \cite{Rom11}, 2.1.3 that $\pi_0(A)$ is an \'etale
quasi-compact $R$-algebraic space. Since $R$ is artinian, this
space is finite. In particular it is affine and the map
$A^{\et}\to \cO(\pi_0(A))$ is an isomorphism. It remains to prove
that $A^{\et}\to A^{p^\infty}$ is an isomorphism. The proof of this
is in five steps.

\medskip

\noindent Step 1. We reduce to the case where $R=k$ is a field.
Let $\mathfrak m$ resp. $k$ be the maximal ideal resp. residue field.
Let $\Frob:R\to R$ be the absolute Frobenius and $e$ an integer
such that $\mathfrak m=\ker\Frob^e$. Then $\Frob^e$ induces a ring
map $\alpha:k\to R$ which we use to view $R$ as a $k$-algebra.
We compute the perfection of $A$ using the cofinal system
of indices $e\NN\subset\NN$. For each $i\ge 0$ the morphism
$\Frob^{ei}:R\to R$ has a factorization:
\[
\begin{tikzcd}[column sep=8mm]
R \arrow[r, twoheadrightarrow]
& k \arrow[rr, "\Frob^{e(i-1)}"]
& & k \arrow[r, "\alpha"] & R.
\end{tikzcd}
\]
Writing $A_0=A\otimes_Rk$, it follows that
$A^{p^{ei}/R}=A_0^{p^{e(i-1)}/k}\otimes_k R$. Passing to
the limit and
using~\ref{lemma:properties of base change map}~(1),
we deduce an isomorphism:
\[
\begin{tikzcd}[column sep=scriptsize]
\lambda:A_0^{p^{\infty}/k}\otimes_k R \arrow[r, "\sim"]
& A^{p^{\infty}/R}.
\end{tikzcd}
\]
On the other hand, the $e$-fold absolute Frobenius
$\Frob^e_A:A_0^{\et\!/k} \to A^{\et\!/R}$ extends the map
$\alpha:k\to R$, providing an isomorphism:
\[
\begin{tikzcd}[column sep=scriptsize]
\mu:A_0^{\et\!/k}\otimes_{k,\alpha} R \arrow[r, "\sim"]
& A^{\et\!/R}.
\end{tikzcd}
\]
Since $\lambda$ and $\mu$ fit together in a commutative square,
the reduction step follows.

\medskip

\noindent Step 2. We reduce to the case where $k$ is algebraically closed.
Let $k'$ be an algebraic closure of~$k$, and $A'\defeq A\otimes_kk'$.
We have injections
\[
A^{\et/k}\otimes_kk' \intoo A^{p^\infty/k}\otimes_k k'
\intoo (A')^{p^\infty/k'}
\]
where the first is deduced from
$A^{\et/k} \into A^{p^\infty/k}$ and the second comes from
case (2.i) of Lemma~\ref{lemma:properties of base change map}.
It is classical that $A^{\et/k}\otimes_k k'=(A')^{\et/k'}$,
see \cite{Wa79}, Th.~6.5.
It follows that if
$(A')^{\et/k'}\to (A')^{p^\infty/k'}$ is an isomorphism, then
$A^{\et/k}\otimes_kk' \into A^{p^\infty/k}\otimes_k k'$ is
an isomorphism and hence $A^{\et/k}\to A^{p^\infty/k}$ is an
isomorphism.

\medskip

\noindent Step 3. We reduce to the case where $A$ is reduced. Let
$A_{\red}$ be the reduced quotient.
On the separable closure side, since $A^{\et/k}$ does not meet
the nilradical $\Nil(A)$ and all separable elements of $A_{\red}$
lift to $A$, we have an isomorphism
$A^{\et/k}\isomto (A_{\red})^{\et/k}$. On the preperfection side,
we use the isomorphisms
$A^{p^i/k}\isomto A$, $a\otimes \lambda\mapsto a\lambda^{p^{-i}}$
to obtain an isomorphism of rings
$A^{p^\infty/k}\isomto A^{p^\infty/\FF_p}$, and similarly for
$A_{\red}$. Since $\Nil(A)$ is finitely generated, there is $e\ge 0$
such that $\Nil(A)=\ker \Frob^e$ where $\Frob:A\to A$ is the
absolute Frobenius. Then the computation of the
perfection can be carried out along the cofinal system of indices
$e\NN\subset\NN$, showing that the projection
$A^{p^\infty/\FF_p}\to (A_{\red})^{p^\infty/\FF_p}$
is an isomorphism. Contemplating the commutative diagram below,
we see that if $(A_{\red})^{\et/k}\to (A_{\red})^{p^\infty/k}$ is
an isomorphism then $A^{\et/k}\to A^{p^\infty/k}$ also.
\[
\begin{tikzcd}[row sep=10mm]
A^{\et/k} \ar[d,"{\simeq}"] \ar[r] & A^{p^\infty/k} \ar[r,"{\simeq}"] &
A^{p^\infty/\FF_p} \ar[d,"{\simeq}"] \\
(A_{\red})^{\et/k} \ar[r] & (A_{\red})^{p^\infty/k} \ar[r,"{\simeq}"]
& (A_{\red})^{p^\infty/\FF_p}
\end{tikzcd}
\]

\noindent Step 4. We reduce to the case where $A$ has connected
spectrum. This is straightforward, because if
$A=A_1\times\dots\times A_d$ is the decomposition of
$A$ as a product of rings with connected spectrum, we have
$(\prod A_i)^{\et/k}\simeq \prod A_i^{\et/k}$ and
$(\prod A_i)^{p^\infty/k}\simeq \prod A_i^{p^\infty/k}$.

\medskip

\noindent Step 5. We conclude that $A^{\et/k}\to A^{p^\infty/k}$
is surjective. Let $x$ be an element of the ring
\[
A^{p^\infty/k}\simeq A^{p^\infty/\FF_p}=\cap_{n\ge 0}A^{p^n},
\]
with $x=x_n^{p^n}$ and $x_n\in A$, for each $n$. By noetherianity,
the increasing sequence of ideals $(x_i)$ stabilizes at some $N$.
It follows that $y\defeq x_N$ satisfies $(y)=(y^p)$, in particular
$(y)=(y^2)$. Since $X=\Spec(A)$ is connected, we deduce that $y=0$
or $y$ is a unit; therefore $x=0$ or $x$ is a unit. Let $A_i$ be the
quotients of~$A$ by the minimal primes. Again by connectedness,
the injection $A\into A_1\times\dots\times A_n$ induces a morphism
of groups of units modulo constants
$A^\times/k^\times\into
(A_1^\times/k^\times)\dots\times (A_n^\times/k^\times)$ which is
{\em injective}. It is a classical result of Rosenlicht
(\cite{Ros57}, lemma to Prop.~3) that each $A_i^\times/k^\times$
is a finitely generated free abelian group; hence the
same is true for $A^\times/k^\times$. In particular the class of $x$
in this group cannot be infinitely $p$-divisible, so $x\in k^\times$
and this proves the claim.
\end{proof}

\subsection{Preperfection over noetherian rings}
\label{subsection:noetherian base}

The aim of this section is to generalize the statement that
$\cO(\pi_0(A)) \to A^{p^\infty}$ is an isomorphism to the case
of a general noetherian base ring $R$, in the case of
{\em separable} algebras. The proof proceeds by thickening from
an artinian base to a complete local base, then a Zariski-local
base and then to a general base by induction on the dimension.

\begin{lemma}\label{lemma:complete_local_case}
Let $R$ be a complete noetherian local ring and $A$ a flat
separable $R$-algebra of finite type. Write $\hat{A}$ for the
completion of $A$ with respect to the maximal ideal of $R$,
and write $\hat{\pi}$ for the finite \'etale $R$-scheme built
from $\pi_0(A/R)$ as in the situation of
\S~\ref{noth:completion}.
Then the natural map $\cO(\hat{\pi})\to(\hat{A})^{p^{\infty}/R}$
is an isomorphism.
\end{lemma}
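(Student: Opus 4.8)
The plan is to proceed by thickening: reduce modulo the powers of the maximal ideal $\mathfrak m$ of $R$ and invoke the artinian case, Theorem~\ref{theorem:artinian base case}, on each layer. Write $R_n=R/\mathfrak m^{n+1}$, $S_n=\Spec R_n$ and $A_n=A\otimes_RR_n=A/\mathfrak m^{n+1}A$, so that $\widehat A=\lim_nA_n$ and $\widehat A/\mathfrak m^{n+1}\widehat A=A_n$. First I would record the elementary ingredients. By \ref{noth:completion} the scheme $\widehat\pi$ is finite \'etale over $R$, hence affine, so $\cO(\widehat\pi)$ is a finite $R$-module — in particular $\mathfrak m$-adically complete and separated — and $\cO(\widehat\pi)/\mathfrak m^{n+1}\cO(\widehat\pi)=\cO(\widehat\pi\times_SS_n)=\cO(\pi_0(A_n/R_n))$. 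Moreover the canonical maps $\Spec A_n\to\pi_0(A_n/R_n)=\widehat\pi\times_SS_n$ are compatible in $n$ and so yield an $S$-morphism $\Spec\widehat A\to\widehat\pi$; since $\widehat\pi\to S$ is \'etale, hence perfect, this morphism extends over the relative Frobenius tower of $\Spec\widehat A$ and thereby defines the natural map $\cO(\widehat\pi)\to\lim_i(\widehat A)^{p^i/R}=(\widehat A)^{p^\infty/R}$ of the statement. Finally, Frobenius twists, the functor $\pi_0$ (Lemma~\ref{lemma:base change for pi0}) and all of the above constructions commute with the base change $R\to R_n$, so reduction modulo $\mathfrak m^{n+1}$ identifies the natural map with the corresponding one over $R_n$, namely $\cO(\pi_0(A_n/R_n))\to A_n^{p^\infty/R_n}$, which is an isomorphism by Theorem~\ref{theorem:artinian base case}.

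The key technical point is to realise $(\widehat A)^{p^\infty/R}$ as a subring of $\widehat A$, so that its elements may be reduced modulo the $\mathfrak m^{n+1}\widehat A$ and compared there. For this I would check that $\widehat A$ is again flat and separable over $R$. Flatness is immediate. For the fibres, note that $R$, being complete local, is excellent, hence so is its finite-type algebra $A$, so $A\to\widehat A$ is a regular morphism; since the fibre of $\Spec\widehat A\to\Spec R$ over $\mathfrak p$ is a base change of the regular morphism $A\to\widehat A$ applied to the geometrically reduced $\kappa(\mathfrak p)$-algebra $A\otimes_R\kappa(\mathfrak p)$, it is again geometrically reduced. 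Then Dumitrescu's Theorem~\ref{theo:Frobenius of reduced is injective} gives that $\Frob_{\widehat A/R}$ is injective with $R$-flat cokernel; twisting this short exact sequence repeatedly by $(-)\otimes_{R,\Frob}R$ — an exact operation here because the successive cokernels remain $R$-flat, and one sending $\Frob_i$ to $\Frob_{i+1}$ — shows that every transition map $\Frob_i\colon(\widehat A)^{p^{i+1}/R}\to(\widehat A)^{p^i/R}$ is injective. Hence $(\widehat A)^{p^\infty/R}=\bigcap_i(\widehat A)^{p^i/R}$ inside $\widehat A$; and $\widehat A$ is $\mathfrak m$-adically separated, so an element of $(\widehat A)^{p^\infty/R}$ is determined by its reductions modulo all the $\mathfrak m^{n+1}\widehat A$.

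With these preparations in place the isomorphism follows by two direct verifications. Injectivity: a class of $\cO(\widehat\pi)$ killed in $(\widehat A)^{p^\infty/R}$ reduces to $0$ in each $A_n^{p^\infty/R_n}$, hence — by the base-change identification above and the isomorphism of Theorem~\ref{theorem:artinian base case} — to $0$ in each $\cO(\widehat\pi)/\mathfrak m^{n+1}\cO(\widehat\pi)$, hence vanishes by separatedness of $\cO(\widehat\pi)$. Surjectivity: given $z=(z_i)_i\in(\widehat A)^{p^\infty/R}\subseteq\widehat A$, for each $n$ the reductions $(z_i\bmod\mathfrak m^{n+1})_i$ form an element of $\lim_iA_n^{p^i/R_n}=A_n^{p^\infty/R_n}$, which lifts uniquely along the isomorphism of Theorem~\ref{theorem:artinian base case} to $w_n\in\cO(\pi_0(A_n/R_n))=\cO(\widehat\pi)/\mathfrak m^{n+1}\cO(\widehat\pi)$; the $w_n$ are compatible and assemble to $w\in\cO(\widehat\pi)$, whose image in $(\widehat A)^{p^\infty/R}$ has the same reduction as $z$ in every $A_n$, hence equals $z$ (using once more the injectivity of the Frobenius tower and separatedness of $\widehat A$).

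I expect the main obstacle to be the second paragraph: proving that $\widehat A$ is still separable over $R$ — this is where the excellence of complete local rings, through regularity of the completion map, really enters — and then upgrading this, via Dumitrescu's theorem together with the stability of $R$-flat cokernels under Frobenius twisting, into injectivity of the full Frobenius tower of $\widehat A$. The rest — the precise description of the natural map, its compatibility with reduction modulo $\mathfrak m^{n+1}$, and the two diagram chases — is formal, given the base-change behaviour of $\pi_0$ and of perfection.
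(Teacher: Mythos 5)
Your proof is correct and follows essentially the same route as the paper's: both reduce to the artinian layers $R_n=R/\mathfrak m^{n+1}$ via Theorem~\ref{theorem:artinian base case}, and both use Dumitrescu's Theorem~\ref{theo:Frobenius of reduced is injective} together with the $\mathfrak m$-adic separatedness of $\widehat A$ to realise $(\widehat A)^{p^\infty/R}$ as a subring of $\widehat A=\lim_n A_n$ (the paper packages your injectivity/surjectivity check as the construction of a retraction by interchanging $\lim_i$ and $\lim_n$). Two small remarks: your excellence argument for the separability of $R\to\widehat A$ usefully fills in a hypothesis that the paper leaves implicit, while your phrase that ``all of the above constructions commute with $R\to R_n$'' should exclude the preperfection itself --- only the finite Frobenius twists commute with this base change, but the induced map $(\widehat A)^{p^\infty/R}\to A_n^{p^\infty/R_n}$ is all your argument actually uses, so nothing breaks.
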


\begin{proof}
Let $\mathfrak m$ be the maximal ideal of $R$.
Write $B=\cO(\hat{\pi})$. For every $n\geq 0$, let
$R_n=R/\mathfrak m^{n+1}$, $A_n=A\otimes_RR_n$,
$B_n=B\otimes_RR_n$. As $B_n=\cO(\pi_0(A_n/R_n))$, for every $n$
we have an inclusion $B_n\into A_n$. Taking the limit over $n$,
and noticing that $B$ is finite over $R$ hence complete, we
obtain an inclusion $B\into \hat{A}$. As $B$ is also \'etale
over $R$, it is in fact contained in $(\hat{A}/R)^{p^\infty}$.

On the other hand, a section to the inclusion
$B\into \hat{A}^{p^{\infty}}$ is given by the map
\[
{\widehat A}^{p^\infty}=\lim_i {\widehat A}^{p^i}
=\lim_i\,(\lim_n A_n)^{p^i}\to \lim_i\lim_n(A_n^{p^i})=\lim_n\lim_i(A_n^{p^i})=\lim_nA_n^{p^\infty}=\lim_nB_n=B.
\]
Here, the second-to-last equality comes from
Theorem~\ref{theorem:artinian base case}.
To complete the proof it suffices to show that
$\hat A^{p^\infty}\to B$ is injective, or that
$(\lim_n A_n)^{p^i}\to \lim_n(A_n^{p^i})$ is injective. The latter
is the completion morphism:
\[
(\hat A)^{p^i}\to \hat{(\hat A)^{p^i}}.
\]
As $R$ and $\hat A$ are both noetherian, the hypotheses of
Theorem~\ref{theo:Frobenius of reduced is injective} are satisfied,
and we deduce that $(\hat A)^{p^i}$ is a subalgebra of $\hat A$.
As the latter is $\mathfrak m$-adically separated (that is,
$\bigcap_{i=1}^n\mathfrak m^i\hat A=0$), so is its subalgebra
$(\hat A)^{p^i}$. Hence the completion morphism above
is injective and we conclude.
\end{proof}

\begin{theo}\label{theo:preperfection_algebra}
Let $R$ be a noetherian ring and $A$ a flat, separable $R$-algebra
of finite type. Then the natural map
\[
\phi\colon \cO(\pi_0(A/R))\too A^{p^\infty/R}
\]
is an isomorphism.
\end{theo}

\begin{proof}
As a first step, we claim that we may reduce to the case of $R$ complete local. Indeed, let $R\to R'$ be the completion of the local ring at some prime $\mathfrak p\subset R$. The morphism $R\to R'$ is flat. We have a map
$$\cO(\pi_0(A\otimes_RR'/R'))=\cO(\pi_0(A/R))\otimes_RR'\to A^{p^\infty}\otimes_RR'\into (A\otimes_RR'/R')^{p^\infty}.$$
The first equality is compatibility of global sections and flat
base change,
the second arrow is $\phi\otimes_RR'$, while the last arrow is injective by \ref{lemma:properties of base change map}. We see that if the composition is an isomorphism, then also the central arrow $\phi\otimes_RR'$ is an isomorphism. As $R_{\mathfrak p}\to R'$ is faithfully flat, the map $\phi\otimes_RR_{\mathfrak p}$ is also an isomorphism. Repeating the argument for all $\mathfrak p\subset R$, we find that $\phi$ is an isomorphism. This proves the claim.

We argue by induction on the dimension of $R$. If $R$ is of
dimension zero, it is a product of finitely many artinian local
rings; we reduce to $R$ local and the result follows by
Theorem~\ref{theorem:artinian base case}.

Now let $d$ be the dimension of $R$, and assume the result true for base rings of dimension at most $d-1$. We may assume $R$ local and complete with respect to its maximal ideal. Let $s$ be the closed point of $\Spec R$, and $V=S\setminus\{s\}$. Notice that $V$ is of dimension $d-1$. Cover $V$ with open affines $U_i=\Spec R_i$. Consider the commutative diagram of solid arrows st
\begin{center}\label{diagram}
\begin{tikzcd}
& A \ar[r, dashed] \ar[d, dashed] \ar[ldd, bend right, dashed] & \widehat A\ar[d]\\
&\prod_i A\otimes_R R_i \ar[r]\ar[d] & \prod_i \widehat A\otimes_RR_i \ar[d]\\
0 \ar[r] & \prod_{i,j}A\otimes_R R_i\otimes_R R_j \ar[r] & \prod_{i,j}\widehat A\otimes_R R_i\otimes_R R_j
\end{tikzcd}
\end{center}
Clearly, $A$ admits natural compatible maps towards the diagram, represented by dashed arrows in the diagram.

Next, we take the preperfection of the diagram. By
Lemma~\ref{lemma:complete_local_case} we have
$\widehat A^{p^\infty}=\cO(\hat\pi)$. Moreover, for every $R$-algebra $R'$, there is a natural map $\cO(\hat\pi\otimes_RR')=\hat A^{p^\infty}\otimes_RR'\to (\hat A\otimes_RR')^{p^\infty}$. Finally, by the induction hypothesis $(A\otimes R_i)^{p^\infty}=\cO(\pi(X_{U_i}/U_i))$. We get a commutative diagram
\begin{center}
\begin{tikzcd}
& A^{p^\infty} \ar[r, dashed]\ar[d, dashed]\ar[ldd, dashed, bend right] & \cO(\hat\pi)\ar[d]\\
&\prod_i \cO(\pi_0(X_{U_i}/U_i)) \ar[r]\ar[d] & \prod_i \cO(\hat\pi_{U_i}) \ar[d]\\
0 \ar[r] & \prod_{i,j}\cO(\pi_0(X_{U_{ij}}/U_{ij}))\ar[r] & \prod_{i,j} \cO(\hat\pi_{U_{ij}})
\end{tikzcd}
\end{center}
where the horizontal arrows are those induced by the natural
morphism $\psi\colon\hat\pi\to\pi_0(X/S)$
of \S~\ref{noth:completion}. The limit of the diagram of solid
arrows coincides with the limit of the subdiagram of solid
arrows in the commutative diagram
\begin{equation}\label{diagram_rings}
\begin{tikzcd}
A^{p^\infty} \ar[d, dashed] \ar[r, dashed]& \cO(\hat\pi)\ar[d]\\
\cO(\pi_0(X_V/V)) \ar[r] & \cO(\hat\pi_V). \\
\end{tikzcd}
\end{equation}
Taking global sections in the pushout diagram of
Lemma~\ref{prop:pushout}, we see that $\cO(\pi_0(X/S))$ is a fibre product for the subdiagram \eqref{diagram_rings} of solid arrows. Therefore we get a natural map
$\chi\colon A^{p^\infty}\to \cO(\pi_0(X/S))$. The maps
\[
A^{p^\infty}\stackrel{\chi}{\too} \cO(\pi_0(X/S))
\stackrel{\phi}{\too} A^{p^\infty}
\]
are compatible with the natural inclusions of $A^{p^\infty}$
and $\cO(\pi_0(X/S))$ into $A$. Hence $\phi$ is injective,
and because $\phi\circ\chi$ is the identity, it is also
surjective, as we wished to show.
\end{proof}

With the notation of \ref{theo:preperfection_algebra}, the
algebraic space $\pi_0(X/S)$ is \'etale; however, its $R$-algebra
of global sections $\cO(\pi_0(X/S))$ may fail to be unramified
(and therefore \'etale and perfect); see for instance
Example~\ref{example_non_perfect}. In particular, the
preperfection $A^{p^\infty/R}$ needs not be perfect.

\subsection{Perfection over regular or unibranch one-dimensional rings}
\label{subsection:regular base}

Recall from Remark~\ref{rema:Frobenius of preperfection} that
if $R$ is regular and $\Frob$-finite, then for all $R\to A$
the preperfection $A^{p^{\infty}}$ is perfect. For the separable
$R$-algebras that we have been studying in this section,
Theorem~\ref{theo:preperfection_algebra} provides an explicit description
of $A^{p^\infty}$ which allows to find more cases when preperfection is perfect.

\begin{coro} \label{coro:ideal_situation_regular_case}
Let $R$ be a noetherian $\FF_p$-algebra and $A$ a flat and separable
$R$-algebra of finite type.
\begin{trivlist}
\itemn{1} If $R$ is either
\begin{itemize}
\item geometrically $\QQ$-factorial (e.g. regular), or
\item integral, geometrically unibranch and one-dimensional,
\end{itemize}
then we have isomorphisms:
\[
A^{\et} \isomto \cO(\pi_0(A)) \isomto A^{p^\infty}.
\]
In particular $A^{p^\infty}$ is \'etale, hence perfect and of
finite type.
\itemn{2} If $R$ is reduced, excellent, of dimension $\leq 1$,
then $A^{p^\infty}$ is quasi-finite, and in particular of finite type.
\end{trivlist}
\end{coro}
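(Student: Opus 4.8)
The plan is to read off both parts from the main results of this section, so the argument will be short and the only genuine work lies in checking hypotheses. For part~(1), I would first apply Theorem~\ref{theo:preperfection_algebra} — legitimate because $R$ is noetherian and $A$ is flat, separable and of finite type — to conclude that the second arrow $\cO(\pi_0(A/R))\to A^{p^\infty/R}$ is an isomorphism, so only the first arrow remains. Write $X=\Spec A$ and $S=\Spec R$. Since $X\to S$ is separable, $\pi_0(X/S)$ is an \'etale quasi-compact $S$-space (\cite{Rom11}, 2.1.3), the morphism $X\to\pi_0(X/S)$ is surjective and flat, hence faithfully flat, so $\cO(\pi_0(X/S))$ is an $R$-subalgebra of $A$. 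By the adjunction defining $\pi_0$ (\cite{Rom11}), for every \'etale $R$-subalgebra $B\subseteq A$ the morphism $X\to\Spec B$ factors through $\pi_0(X/S)$, whence $B\subseteq\cO(\pi_0(X/S))$ inside $A$. Consequently, once we know that $\cO(\pi_0(X/S))$ is \emph{itself} \'etale over $R$, it must be the largest \'etale subalgebra, so $A^{\et/R}=\cO(\pi_0(A/R))$ and both arrows are isomorphisms. Now $\cO(\pi_0(X/S))$ is \'etale over $R$ exactly when $\pi_0(X/S)^{\aff}\to S$ is \'etale, which is what Proposition~\ref{prop:set_theoretically_factorial_etale} gives (its proof in fact shows $E^{\aff}\to S$ is \'etale for every \'etale quasi-compact $S$-space $E$).

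So the one step requiring care — and the only place I expect friction — is checking that $R$ meets the hypotheses of Proposition~\ref{prop:set_theoretically_factorial_etale}: locally noetherian, $S_2$, with geometrically set-theoretically factorial local rings. I would argue as follows. If $R$ is geometrically $\QQ$-factorial, its strict henselizations are normal (hence integral, so geometrically unibranch, and $S_2$ by Serre's criterion) and $\QQ$-factorial, so some positive multiple of each prime divisor is principal; thus every pure one-codimensional closed subscheme has the same support as a principal one. If instead $R$ is integral, geometrically unibranch and one-dimensional, it is a one-dimensional domain, hence $S_2$ (for a one-dimensional reduced ring, $S_2$ just says there are no embedded primes), its strict henselizations are one-dimensional local domains, and in such a ring the only pure one-codimensional closed subscheme is supported at the closed point, which is the support of $V(f)$ for any nonzero $f$ in the maximal ideal. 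In either case Proposition~\ref{prop:set_theoretically_factorial_etale} applies and part~(1) follows; moreover $A^{p^\infty/R}\cong A^{\et/R}$ is then \'etale over the noetherian ring $R$, hence of finite type, and relatively perfect since \'etale morphisms are perfect in characteristic $p$ (see~\ref{Formal properties}).

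For part~(2) the route is even shorter: Theorem~\ref{theo:preperfection_algebra} identifies $A^{p^\infty/R}$ with $\cO(\pi_0(X/S))=\cO(\pi_0(X/S)^{\aff})$, and Proposition~\ref{prop_curve_qf} — applicable since $R$ is reduced, excellent and of dimension $\le 1$ and $X\to S$ is flat, separable and finitely presented — shows $\pi_0(X/S)^{\aff}\to S$ is quasi-finite. Hence its ring of global sections is a quasi-finite, in particular finite type, $R$-algebra, so the same holds for $A^{p^\infty/R}$.

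In summary, there is no deep obstacle: part~(1) is Theorem~\ref{theo:preperfection_algebra} composed with Proposition~\ref{prop:set_theoretically_factorial_etale}, part~(2) is Theorem~\ref{theo:preperfection_algebra} composed with Proposition~\ref{prop_curve_qf}, and the only real tasks are the elementary translation of the two hypotheses in~(1) into the ``$S_2$ plus geometrically set-theoretically factorial'' language and the bookkeeping identification of $\cO(\pi_0(A/R))$ with the largest \'etale subalgebra of $A$ via the universal property of $\pi_0$.
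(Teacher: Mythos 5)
Your proof is correct and follows exactly the paper's route: part (1) is Proposition~\ref{prop:set_theoretically_factorial_etale} combined with Theorem~\ref{theo:preperfection_algebra}, and part (2) is Theorem~\ref{theo:preperfection_algebra} combined with Proposition~\ref{prop_curve_qf}. The only added content is your explicit verification that the two hypotheses in (1) imply ``$S_2$ with geometrically set-theoretically factorial local rings,'' which the paper merely asserts in the remark following the definition, and your checks there are sound.
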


\begin{proof}
(1) We proved that the map $A^{\et} \to \cO(\pi_0(A))$ is an
isomorphism in
Proposition~\ref{prop:set_theoretically_factorial_etale},
and that the map $\cO(\pi_0(A)) \to A^{p^\infty}$ is an
isomorphism in Theorem~\ref{theo:preperfection_algebra}.

\smallskip

\noindent (2) This follows immediately from
Proposition~\ref{prop_curve_qf}.
\end{proof}

\subsection{Examples} \label{subsection:examples}

We shall see that the coperfection of the
spectrum of an algebra is not the spectrum of its perfection.
In fact, in the flat and separable case the coperfection of an
affine scheme is $\pi_0$ and may be non-separated. Here is an example.

\begin{lemma}\label{example_1}
Let $R=\FF_p[[u]]$ and consider the $R$-algebra
\[A=\frac{R[x,y,(x-y)^{-1}]}{(xy-u)}.\]
Then $A^{p^\infty}=R$ while $\pi_0(A/R)$ is the non-separated
scheme obtained by glueing two copies of $\Spec(R)$ along the
generic fibre.
\end{lemma}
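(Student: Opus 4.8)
The plan is to describe $\pi_0(A/R)$ by hand and then read off the value of $A^{p^\infty/R}$ from Theorem~\ref{theo:preperfection_algebra}. Write $S=\Spec R$, with closed point $s$ (defined by $u=0$) and generic point $\eta$, and $X=\Spec A$. First I would check that $X\to S$ is flat, finitely presented and separable: $A$ is a localization of $R[x,y]/(xy-u)$, which is $R$-flat because $u$ is a nonzerodivisor modulo $xy-u$ and $R$ is a discrete valuation ring; the generic fibre of $X\to S$ is an open subscheme of $\AA^1_{\mathrm{Frac}(R)}$, and the special fibre is $\Spec\FF_p[x,y,(x-y)^{-1}]/(xy)$, so both are geometrically reduced. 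Hence $\pi_0(X/S)$ is an étale, quasi-compact algebraic space and Theorem~\ref{theo:preperfection_algebra} applies.

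Let $\Sigma$ denote the (non-separated) scheme obtained by glueing two copies $S_1,S_2$ of $S$ along the common open $S\setminus\{s\}=\Spec\mathrm{Frac}(R)$; it is étale and quasi-compact over $S$, hence $\pi_0(\Sigma/S)=\Sigma$. The heart of the argument is to produce an $S$-morphism $\rho\colon X\to\Sigma$ all of whose geometric fibres are nonempty and connected. To do this I would cover $X$ by the two opens $U_1=D(y)=\Spec A[y^{-1}]$ and $U_2=D(x)=\Spec A[x^{-1}]$; since $x-y$ is a unit in $A$ the ideal $(x,y)$ is the unit ideal, so $U_1\cup U_2=X$, while $U_1\cap U_2=\{xy\ne 0\}=\{u\ne 0\}$ is exactly the generic fibre $X_\eta$. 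Eliminating a variable via $xy=u$ identifies $A[y^{-1}]$ with $R[y,y^{-1},(u-y^2)^{-1}]$ and $A[x^{-1}]$ with $R[x,x^{-1},(u-x^2)^{-1}]$; in particular each $U_i\to S$ is flat, of finite presentation, with geometrically connected fibres (a copy of $\GG_m$ over $s$, and an open subscheme of $\GG_m$ over $\eta$). The structure maps $U_i\to S\xrightarrow{\ \sim\ }S_i\hookrightarrow\Sigma$ agree on $U_1\cap U_2=X_\eta$ (both factor through $\Spec\mathrm{Frac}(R)\hookrightarrow\Sigma$), so they glue to a morphism $\rho\colon X\to\Sigma$. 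By construction $\rho$ is flat (flatness is local on the source and $\rho|_{U_i}$ is the flat map $U_i\to S_i$), of finite presentation, and surjective, hence an fppf universal submersion; and its geometric fibres over $s_1$, over $s_2$, and over the generic point of $\Sigma$ are $(U_1)_s$, $(U_2)_s$, and $X_\eta$ respectively, all nonempty and geometrically connected.

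Lemma~\ref{lemma:base restriction}(2) then yields $\pi_0(X/S)\isomto\pi_0(\Sigma/S)=\Sigma$, which is precisely the asserted description of $\pi_0(A/R)$. Finally, a global function on $\Sigma$ is a pair of elements of $R$ that coincide in $\mathrm{Frac}(R)$, and since $R\hookrightarrow\mathrm{Frac}(R)$ this forces equality, so $\cO(\pi_0(A/R))=\cO(\Sigma)=R$. Combining with the isomorphism
\[
\cO(\pi_0(A/R))\isomto A^{p^\infty/R}
\]
of Theorem~\ref{theo:preperfection_algebra} gives $A^{p^\infty/R}=R$. The main obstacle is the construction of $\rho$ and the verification that its fibres are connected — equivalently, that the two halves $U_1,U_2$ of $X$ each spread one end of the disconnected special fibre over the whole base while overlapping exactly along the generic fibre; granting the results established earlier, everything else is formal.
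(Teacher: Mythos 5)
Your argument is correct, but it takes a genuinely different route from the paper's. The paper exhibits two sections $s_1=\{x=u,\,y=1\}$ and $s_2=\{x=1,\,y=u\}$ of $X\to S$ which together meet every connected component of every fibre, and reads off $\pi_0(X/S)$ from the induced map $S\sqcup S\to\pi_0(X/S)$: the two sections land in distinct components over the closed point and in the same component over the generic point, so $\pi_0(X/S)$ is the glueing of two copies of $S$ along the generic fibre. You instead go ``downward'': you cover $X$ by $D(x)$ and $D(y)$, check each has nonempty geometrically connected fibres and that they overlap exactly in $X_\eta$, glue the structure maps into $\rho\colon X\to\Sigma$, and invoke Lemma~\ref{lemma:base restriction}(2). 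Your computations ($(x,y)=A$ since $x-y$ is a unit, $A[y^{-1}]\cong R[y,y^{-1},(u-y^2)^{-1}]$, the fibre analysis of $\rho$) all check out, and the final step via Theorem~\ref{theo:preperfection_algebra} is the same in both proofs. One caveat: you lean on Lemma~\ref{lemma:base restriction}(2), which the paper states but explicitly declines to prove on the grounds that it is ``easy to prove and not used in the paper''; so your argument rests on an asserted-but-unproven statement, whereas the paper's section-based argument needs only the moduli description of $\pi_0$ as classifying relative connected components. What your version buys in exchange is an explicit fppf presentation of $\pi_0(X/S)$ as a quotient of $X$ itself, which makes the non-separatedness and the identification $\cO(\pi_0(A/R))=R$ completely transparent.
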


\begin{proof}
Let $X=\Spec A$, $S=\Spec R$. The fibre of $X\to S$ over the closed point
has two connected components, while the generic fibre is connected.
The two sections $s_1,s_2\colon S\to X$, $s_1=\{x=u, y=1\}$ and
$s_2=\{x=1,y=u\}$ meet all components of all fibres; it follows that
the composition
\[
S\sqcup S\xrightarrow{s_1,s_2} X\to \pi_0(X/S)
\]
is given by glueing the two copies of $S$ along the generic fibre.
Therefore $\pi_0(X/S)$ is non-separated. From
\ref{theo:preperfection_algebra} it follows
that $A^{p^\infty}=\cO(\pi_0(X/S))$, which is equal to $R$.
\end{proof}

The following is the most basic example of a non-perfect
preperfection, that is, an $R$-algebra~$A$ which is flat,
separable, of finite presentation, for which the
preperfection $A^{p^\infty/R}$ is not perfect. The ring
$R$ is one-dimensional; we remark that, in accordance with
Proposition~\ref{prop:set_theoretically_factorial_etale},
we need to choose $R$ with
multiple branches. Since the preperfection is not perfect,
it is natural to ask what happens if we take the
preperfection once more. Here is the answer.

\begin{lemma}\label{example_non_perfect}
Let $R=\FF_p[[u,v]]/(uv)$ and
$A=R[x,y,(x-y)^{-1}]/(xy-u)$.
If $p\ne 2$, we have:
\begin{itemize}
\item[\rm (1)] $A^{p^\infty}\simeq\frac{R[\alpha]}{(u\alpha, v^2-\alpha^2)}$
mapping to $A$ by $\alpha\mapsto v\frac{x+y}{x-y}$,
\item[\rm (2)] $(A^{p^\infty})^{p^\infty}\simeq R$.
\end{itemize}
\end{lemma}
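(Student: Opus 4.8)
\emph{Part (1).} The plan is to reduce to a computation of $\pi_0$ via Theorem~\ref{theo:preperfection_algebra}. One first records that $A$ is flat, finitely presented and separable over $R$: flatness because $R[x,y]/(xy-u)$ is the base change to $R$ of $\mathbb{F}_p[x,y]=\mathbb{F}_p[u][x,y]/(xy-u)$, which is free over $\mathbb{F}_p[u]=\mathbb{F}_p[xy]$; separability because the fibres of $X=\Spec A$ over the three points $(v),(u),\mathfrak m$ of $S=\Spec R$ are $\Spec\mathbb{F}_p((u))[x^{\pm1},(x^2-u)^{-1}]$ and (twice) $\mathbb{G}_m$ over $\mathbb{F}_p((v))$ resp. over $\mathbb{F}_p$, all geometrically reduced. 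Theorem~\ref{theo:preperfection_algebra} then gives $A^{p^\infty/R}=\cO(\pi_0(X/S))$ as subalgebras of $A$ (the inclusion into $A$ being injective because the transition Frobenii are, by Theorem~\ref{theo:Frobenius of reduced is injective}). To compute $\cO(\pi_0(X/S))$ I would use the pushout Proposition~\ref{prop:pushout}: the closed fibre has two connected components, so the finite \'etale scheme $\hat\pi$ of \S\ref{noth:completion} is $\Spec R\sqcup\Spec R$; the punctured base is $V=D(u)\sqcup D(v)=\Spec R_u\sqcup\Spec R_v$, the fibre of $X$ over $D(u)$ is geometrically connected while the fibre over $D(v)$ is $\mathbb{G}_m\sqcup\mathbb{G}_m$, so $\pi_V=\pi_0(X_V/V)=\Spec R_u\sqcup\Spec R_v\sqcup\Spec R_v$; and $\psi$ is the fold map $\Spec R_u\sqcup\Spec R_u\to\Spec R_u$ over the $u$-branch and an isomorphism over the $v$-branch (two finite \'etale covers of $\Spec(R/uR)$ agreeing on the closed point). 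Applying the functor $\cO(-)$, which turns this pushout of fppf sheaves into a fibre product of rings, and chasing the factors, everything collapses to
\[
\cO(\pi_0(X/S))=R\times_{R_u}R=R\times_{R/vR}R=\{(f_1,f_2)\in R^2:\ f_1\equiv f_2 \bmod v\},
\]
using that $R\to R_u$ factors through the injection $R/vR=\mathbb{F}_p[[u]]\hookrightarrow R_u$.

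\emph{Identification of the ring.} The element $\beta=(v,-v)$ of $R\times_{R/vR}R$ satisfies $u\beta=0$, $\beta^2=v^2$, and generates over $R$: since $p\ne2$, every element of $vR$ is of the form $2sv$, and then $(f_1,f_2)=\tfrac{f_1+f_2}{2}(1,1)+s\beta$; a direct check that $r(1,1)+s\beta=0$ forces $r=0$ and $s\in uR$ shows the surjection $R[\alpha]/(u\alpha,v^2-\alpha^2)\twoheadrightarrow\cO(\pi_0(X/S))$, $\alpha\mapsto\beta$, is also injective, hence an isomorphism. Finally $g:=v\tfrac{x+y}{x-y}\in A$ is well defined ($x-y$ is a unit), satisfies $ug=0$, and — using $(x+y)^2=(x-y)^2+4u$ and $v^2u=v(vu)=0$ — satisfies $g^2=v^2\tfrac{(x-y)^2+4u}{(x-y)^2}=v^2$, so $\alpha\mapsto g$ defines an $R$-algebra map $R[\alpha]/(u\alpha,v^2-\alpha^2)\to A$; comparing values on the two components $\{x=0\},\{y=0\}$ of the closed fibre identifies it (up to the labelling sign of $\beta$) with the embedding $\cO(\pi_0(X/S))=A^{p^\infty/R}\hookrightarrow A$, giving the statement~(1).

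\emph{Part (2).} Write $B=A^{p^\infty/R}=R[\alpha]/(u\alpha,v^2-\alpha^2)$. It is neither flat nor separable over $R$ (its closed fibre is $\mathbb{F}_p[\alpha]/(\alpha^2)$), so Theorem~\ref{theo:preperfection_algebra} is unavailable and I compute $B^{p^\infty/R}=\lim_i B^{p^i/R}$ by hand. Base-changing the presentation along $\Frob^i\colon R\to R$ gives $B^{p^i/R}=R[\alpha_i]/(u^{p^i}\alpha_i,\alpha_i^2-v^{2p^i})=R\oplus(R/u^{p^i}R)\alpha_i$, with transition map $\Frob_i\colon B^{p^{i+1}/R}\to B^{p^i/R}$ sending $\alpha_{i+1}\mapsto\alpha_i^p$. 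The crucial point, where $p$ odd is used, is the identity $\alpha_i^p=\alpha_i(\alpha_i^2)^{(p-1)/2}=v^{p^i(p-1)}\alpha_i$ in $B^{p^i/R}$. Hence a compatible system has the shape $(a+b_i\alpha_i)_i$ with $a\in R$ constant and $b_i\in R/u^{p^i}R$ satisfying $b_i\equiv v^{p^i(p-1)}b_{i+1}\pmod{u^{p^i}}$; iterating, $b_i\equiv v^{p^i(p^k-1)}b_{i+k}\pmod{u^{p^i}}$ for all $k\ge1$ (multiplication by a positive power of $v$ annihilates $u^{p^{i+k}}R$, so the identity is genuine in $R/u^{p^i}R$), whence $b_i\in\bigcap_m v^m(R/u^{p^i}R)$. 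Since $R/u^{p^i}R\cong\bigl(\mathbb{F}_p[[u]]/u^{p^i}\bigr)\times_{\mathbb{F}_p}\mathbb{F}_p[[v]]$ with $v$ acting by $0$ on the first factor and $v$-adically on the second, that intersection is $0$; so $b_i=0$ for all $i$, only the constant systems survive, and $B^{p^\infty/R}\cong R$.

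\emph{Main obstacle.} The delicate part is (1): pinning down $\pi_0(X/S)$, which is a genuinely non-separated algebraic space here, through the pushout Proposition — in particular getting the gluing morphism $\psi_V$ right on each of the two branches of $\Spec R$ — and then extracting $\cO(\pi_0(X/S))$ and matching it with the explicit presentation and with the element $g\in A$. Part~(2) is then a mechanical, if slightly fiddly, inverse-limit computation.
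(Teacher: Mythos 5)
Your proposal is correct and follows essentially the same route as the paper: part (1) reduces to computing $\cO(\pi_0(X/S))$ via Theorem~\ref{theo:preperfection_algebra} and the pushout of Proposition~\ref{prop:pushout} (your presentation of the resulting fibre product as $R\times_{R/vR}R$ inside $R\times R$ is just a repackaging of the paper's description as an $R$-submodule of $R_u\times R_v\times R_v$ generated by $(1,1,1),(u,0,0),(0,v,0),(0,0,v)$, and your generator $(v,-v)$ is exactly the paper's $(0,v,-v)\mapsto v\tfrac{x+y}{x-y}$), while part (2) is the same direct inverse-limit computation driven by the identity $\alpha^p=v^{p-1}\alpha$, which forces the $\alpha$-components of any compatible system into $\bigcap_m v^m(R/u^{p^i}R)=0$. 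The added explicit checks of flatness and separability of $A/R$ are a welcome supplement but do not change the argument.
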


Notice that the restriction of $R\to A^{p^\infty}$ to the
branch $\{u=0\}$ is $\FF_p[[v]]\to \FF_p[[v]][\alpha]/(v^2-\alpha^2)$
which is not formally \'etale. Therefore $\phi$ itself is not
formally \'etale and in particular not relatively perfect.
The restriction $p\ne 2$ allows a simpler presentation of
$A^{p^\infty}$ but is inessential.

\begin{proof}
Once for all we set $k=\FF_p$.

\smallskip

\noindent
(1) Let $S=\Spec R$, $X=\Spec A$. The open
complement $V=S\setminus\{s\}$ of the closed point of $S$ is
affine, with $\cO(V)=R_u\times R_v$. It is easy to see that
$\cO(\pi_0(X_V/V))=R_u\times R_v\times R_v$. The inclusion
$\cO(\pi_0(X_V/V)\into \cO(X_V)=A_{u}\times A_{v}$ maps the
elements $(1,0,0)$, $(0,1,0)$, $(0,0,1)$ to $(1,0)$,
$(0,\frac{x}{x-y})$, $(0,\frac{y}{y-x})$ respectively.

Applying the global sections functor to the pushout diagram of
Lemma~\ref{prop:pushout}, and noticing that
$\hat\pi=\Spec(R\times R)$,
we obtain a cartesian diagram
\begin{equation}
\begin{tikzcd}
\cO(\pi_0(X/S)) \ar[r]\ar[d] & R\times R \ar[d]\\
R_u\times R_v\times R_v\ar[r] & (R_u\times R_v)\times (R_u\times R_v)
\end{tikzcd}
\end{equation}
The lower horizontal map sends $(f(u),g(v),h(v))$ to $(f(u),g(v),f(u),h(v))$. The right vertical map sends $(\alpha(u,v),\beta(u,v))$ to $(\alpha(u,0),\alpha(0,v),\beta(u,0),\beta(0,v)).$ It follows that the fibre product $\cO(\pi_0(X/S))$ is the subring of $R_u\times R_v\times R_v$ generated as an $R$-submodule by $(1,1,1)$, $(u,0,0)$, $(0,v,0)$, $(0,0,v)$.

Since $p\neq 2$, we may choose instead $(1,1,1)$, $(u,0,0)$, $(0,v,v)$ and $(0,v,-v)$ as generators. We find that
$$A^{p^\infty/R}\cong \frac{k[[u,v]][\alpha]}{uv,u\alpha,\alpha^2-v^2}=\frac{R[\alpha]}{u\alpha, \alpha^2-v^2}$$
via the map $(u,0,0)\mapsto u$, $(0,v,v)\mapsto v$, $(0,v,-v)\mapsto \alpha$.

Finally, notice that the element $(0,v,-v)\in R_u\times R_v\times R_v$ is mapped to $v\frac{x+y}{x-y}$ in $A_u\times A_v$. This proves the claim.

\smallskip


\noindent (2) Let $B=A^{p^{\infty}}$.
Notice first that any element of $B$ can be written uniquely as $f+g\alpha$, with $f\in R$ and $g\in R/u$. Therefore, any element of $B^{(p^n)}=B\otimes_{R,F^n}R$ takes either the form $1\otimes f$ with $f\in R$ or $\alpha\otimes g$ with $g\in R/u^{p^n}$. In fact, the map of $R$-modules
\[
\begin{array}{rcl}
B^{(p^n)} & \too & R\oplus R/u^{p^n} \\
1\otimes f & \longmapsto & (f,0) \\
\alpha\otimes g & \longmapsto & (0,g)
\end{array}
\]
is an isomorphism, which we will use to rewrite the preperfection diagram of $B$. The $n$-th map in the diagram is $B^{(p^n)}\to B^{(p^{n-1})}$ sending $1\otimes f$ to $1\otimes f$ and $\alpha\otimes g$ to $\alpha^p\otimes g=v^{p-1}\alpha\otimes g=\alpha\otimes v^{p^n-p^{n-1}}g$. Using the isomorphism of $R$-modules above, this becomes the map of $R$-modules
$$G_n\colon R\oplus R/u^{p^n} \to R \oplus R/u^{p^{n-1}}$$
sending $(f,g)$ to $(f,gv^{p^n-p^{n-1}})$.
Consider now the preperfection diagram
\[
\begin{tikzcd}[column sep=12mm]
\ldots \ar[r,"G_{n+1}"] &
R\oplus R/u^{p^n} \ar[r,"G_n"] &
R \oplus R/u^{p^{n-1}} \ar[r,"G_{n-1}"] &
\ldots \ar[r,"G_1"] &
R\oplus R/u.
\end{tikzcd}
\]
Let $H_n=G_1\circ\ldots\circ G_n \colon R\oplus R/u^{p^n}\to R\oplus R/u$ and let $(\ldots,a_n,a_{n-1},\ldots,a_0)$ be an element of the limit of the diagram. We can of course consider the limit in the category of $R$-modules, as it will automatically have an $R$-algebra structure making it into the limit in the category of $R$-algebras. Now, the image of $(f,g)\in R\oplus R/u^{p^n}$ via $H_n$ is $(f,gv^{p^n-1})$. Hence $a_0=(f_0,g_0)$ is such that for every $n \geq 1$, $g_0$ is in the ideal of $R/u$ generated by $v^{p^n-1}$. Therefore $g_0=0$. One can use the same argument to show that for every $a_n=(f_n,g_n)$, $g_n$ vanishes. Therefore the limit is simply the limit of the diagram:
\[
\ldots \stackrel{\id}{\too} R \stackrel{\id}{\too}
R \stackrel{\id}{\too} \ldots \stackrel{\id}{\too} R.
\]
This shows that $B^{p^{\infty}}=R$.
\end{proof}

\section{Unramified $\Frob$-divided objects and
the \'etale fundamental pro-groupoid}
\label{section:coperfection_spaces_stacks}

In this section, we define the \'etale fundamental pro-groupoid
$\sX\to\Pi_1(\sX/S)$ of a flat finitely presented algebraic stack
and we prove Theorem~A, namely that if moreover $\sX/S$ is
separable and $\sM/S$ is a Deligne-Mumford stack,
there is an isomorphism
$\sHom(\Pi_1(\sX/S),\sM) \to \sHom(\sX,\Fdiv(\sM))$.
As a first step, in \ref{ss:coperfection as alg space} we build
on Theorem~\ref{theo:preperfection_algebra} to prove this when
$\sX$ and $\sM$ are algebraic spaces; in this case only the coarse
moduli space $\pi_0(\sX/S)$ appears in the source of the isomorphism.
Then in \ref{subsection:etale_fundamental_stack} we introduce the
\'etale fundamental pro-groupoid  and its basic
properties. Finally in \ref{ss:coperfection as alg stack} we upgrade
the result from algebraic spaces to algebraic stacks in the correct
generality. In order to spare the reader unpleasant technicalities,
some material on groupoid closures needed to handle $\Pi_1(\sX/S)$
is relegated to~\ref{sec:groupoid_closure}.

Note that as we observed in Remark~\ref{rema:coperf determines perf},
the canonical isomorphism
\[
\sHom(\sX,\Fdiv(\sM)) = \sHom(\sX^{\copf},\sM)
\]
allows an equivalent interpretation of the result in terms of
the coperfection of $\sX$. The interplay between the two viewpoints
pervades the section, and the proofs.

\subsection{The case of algebraic spaces}
\label{ss:coperfection as alg space}
Let $S$ be an algebraic space of characteristic $p$
and $X$ a flat, finitely presented, separable $S$-algebraic space.
The algebraic space $\pi_0(X/S)$ is relatively perfect
over~$S$. Therefore the natural morphism
$\Fdiv(\pi_0(X/S)) \too \pi_0(X/S)$
is an isomorphism, and we obtain a natural morphism:
\[
\rho\colon \sX \too \pi_0(\sX/S) \isomto \Fdiv(\pi_0(\sX/S)).
\]
\begin{theo}\label{theo:coperfection_spaces}
Let $S$ be a noetherian algebraic space of characteristic~$p$.
Let $X\to S$ be a flat, finitely presented, separable algebraic
space. Let $M\to S$ be an arbitrary quasi-separated
algebraic space. Then the natural morphism 
given by $\alpha\mapsto \Fdiv(\alpha)\circ \rho$ 
\[
\underline\Hom(\pi_0(X/S),M) \isomto \underline\Hom(X,\Fdiv(M)).
\]
is a bifunctorial isomorphism of sheaves over $S$.
\end{theo}

We make two remarks before giving the proof.

\begin{remas} \label{rem:on coperfection 1}
(1) In terms of coperfection, this theorem says that if
$X\to S$ is a flat, finite type, separable morphism of
noetherian algebraic $\FF_p$-spaces then the inductive
system of relative Frobenii
\begin{center}
\begin{tikzcd}[column sep=12mm]
X \ar[r, "\Frob_{X/S}"] & X^{p/S} \ar[r, "\Frob_{X^p/S}"] &
X^{p^2/S} \ar[r] & \dots
\end{tikzcd}
\end{center}
admits a colimit in the category of quasi-separated algebraic
spaces over $S$; the colimit is the algebraic space
$\pi_0(X/S)$, and is also a coperfection of $X\to S$.

\smallskip

\noindent (2) Point (1) is remarkable if we consider that for
a noetherian ring $R$ and a flat, finite type separable
algebra $R\to A$, taking the preperfection of $A$, i.e., the limit
of relative Frobenius morphisms
\begin{center}
\begin{tikzcd}[column sep=12mm]
\dots \ar[r] & A^{p^2/R} \ar[r, "\Frob_{A^p/R}"] &
A^{p/R} \ar[r, "\Frob_{A/R}"] &
A
\end{tikzcd}
\end{center}
does not guarantee to produce a perfect object, as illustrated in \ref{example_non_perfect}.
\end{remas}

\begin{proof}
Throughout, we write $\pi_0(X)$ instead of $\pi_0(X/S)$.
Let $\rho_0:X\to\pi_0(X)$ be the natural map.
Since $\pi_0(X)\to S$ is perfect, we have a canonical
isomorphism $\underline\Hom(\pi_0(X/S),M)=
\underline\Hom(\pi_0(X/S),\Fdiv(M))$ so the statement to be
proven is that
\[
\Phi\defeq \rho_0^*\colon\underline\Hom(\pi_0(X/S),\Fdiv(M))
\isomto \underline\Hom(X,\Fdiv(M))
\]
is a bifunctorial isomorphism of sheaves over $S$.

We start with easy observations. Obviously we can assume that
$S$ is affine. Since the formation of the $\underline\Hom$
sheaves is compatible with base changes, it is enough to consider
the sections over $S$ and prove that we have a bijection of
$\Hom$ sets. Also, the injectivity part is clear because
$\rho_0$ is an epimorphism of sheaves.

First we reduce to the case $M$ affine. We are free to fix
a morphism $u\colon X\to M$ and prove that~$\Phi$ induces a
bijection between the subsets $\Hom_u(\pi_0(X/S),\Fdiv(M))$
and $\Hom_u(X,\Fdiv(M))$ of maps that induce the same $u$.
Since $X\to S$ is quasicompact, the map~$u$ factors through
a quasicompact open subspace $M'\subset M$, and all maps
in the above $\Hom_u$ subsets factor through $\Fdiv(M')$.
Therefore, replacing $M$ by $M'$ if necessary, we can assume
that $M$ is quasicompact. Let $V\to M$ be an \'etale
surjection with $V$ an affine scheme. Since~$M$ is
quasi-separated, then $g$ is finitely presented; hence the
space $X_V\defeq X\times_MV$ is finitely presented, flat and
separable over $S$. Now start from a map $f:X\to\Fdiv(M)$.
Taking into account that $\Fdiv(V)\isomto\Fdiv(M)\times_MV$,
see~\ref{noth:perfection}, point (v), by pullback along
$V\to M$ we obtain a map $f_V:X_V\to\Fdiv(V)$. By assumption,
since $V$ is affine the map
\[
\Phi_V:\Hom(\pi_0(X_V),\Fdiv(V)) \too \Hom(X_V,\Fdiv(V))
\]
is an isomorphism; hence $f_V$ factors uniquely
via $\pi_0(X_V)$. By the pushout property of
Lemma~\ref{prop:pushout_with_pi_0}, the diagram
\[
\begin{tikzcd}
X_V \ar[r] \ar[d] & X \ar[d] \ar[rdd,bend left] & \\
\pi_0(X_V) \ar[r] \ar[rd] & \pi_0(X) \ar[rd,dashed] & \\
& \Fdiv(V) \ar[r] & \Fdiv(M)
\end{tikzcd}
\]
can be completed by a dashed arrow and the claim is proven.

Now we reduce to the case $X$ affine. Let $U\to X$ be an
\'etale atlas with $U$ affine. Starting from a map $X\to\Fdiv(M)$,
by assumption the composition $U\to X\to\Fdiv(M)$ factors
through $\pi_0(U)$. Using once more the pushout of
Lemma~\ref{prop:pushout_with_pi_0}, the diagram
\[
\begin{tikzcd}
U \ar[r] \ar[d] & X \ar[d] \ar[rdd,bend left] & \\
\pi_0(U) \ar[r] \ar[rrd,bend right=20]
& \pi_0(X) \ar[rd,dashed] & \\
& & \Fdiv(M)
\end{tikzcd}
\]
can be completed by a dashed arrow and this completes
the proof.

To conclude when $S,M,X$ are affine, let $X^{\copf}$
be the coperfection in the sense of sheaves as in~\ref{noth:coperfection}, and compute:
\begin{align*}
\Hom(X,\Fdiv(M))
& =\Hom(X^{\copf},M)
\mbox{ by Remark~\ref{rema:coperf determines perf},} \\
& =\lim\Hom(X^{p^i},M) \\
& =\lim\Hom(\cO_M,\cO_{X^{p^i}}) 
\mbox{ because $M$ is affine,} \\
& =\Hom(\cO_M,\lim \cO_{X^{p^i}}) \\
& = \Hom(\cO_M,\cO(\pi_0(X))) 
\mbox{ by Theorem~\ref{theo:preperfection_algebra},} \\
& = \Hom(\pi_0(X),M)
\mbox{ because $M$ is affine,} \\
& = \Hom(\pi_0(X),\Fdiv(M)).
\end{align*}
\end{proof}

\subsection{The \'etale fundamental pro-groupoid}
\label{subsection:etale_fundamental_stack}

In this subsection, the \'etale fundamental pro-groupoid
$\Pi_1(\mathscr{X}/S)$ of a flat finitely presented algebraic
stacks $\sX/S$ is defined as a \textit{2-pro-object} of
the 2-category of algebraic stacks. Let us recall the
definition of this concept.
For more details, we refer to the paper \cite{DD14}.

\begin{defi}
A nonempty 2-category $\mathcal{I}$ is {\it 2-cofiltered}
if it satifies the following conditions:
\begin{enumerate}
    \item[(1)] Given two objects $i,j\in\mathcal{I}$,
there is an object $k\in\mathcal{I}$ and arrows $k\to i$, $k\to j$;
    \item[(2)] Given two arrows $f,g:j\to i$, there is an
arrow $h:k\to j$ and a 2-isomorphism $\alpha:fh\to gh$;
    \item[(3)] Given two 2-arrows $\alpha,\beta:f\to g$,
where $f,g\in\sHom_{\mathcal{I}}(j,i)$, there is an arrow
$h:k\to j$ such that $\alpha h = \beta h$.
\end{enumerate}
\end{defi}

Clearly, a nonempty $1$-category is cofiltered if and only if it is 2-cofiltered when seen as a $2$-category.

\begin{defi}
A {\it 2-pro-object} of a 2-category $\mathcal{C}$ is a
2-functor $F:\mathcal{I}\to\mathcal{C}$ from a small
2-cofiltered 2-category~$\mathcal{I}$. The 2-category of
2-pro-objects of $\mathcal{C}$ is denoted by
$2{\text -}{\rm Pro}(\mathcal{C})$. The category of morphisms
between two 2-pro-objects $F:\mathcal{I}\to\mathcal{C}$ and
$G:\mathcal{J}\to\mathcal{C}$ is
\[
\sHom_{2{\text -}{\rm Pro}(\mathcal{C})}(F, G) \defeq
\lim_{j\in \mathcal{J}}\underset{i\in \mathcal{I}}{\colim}\ \sHom_{\mathcal{C}}(F(i), G(j))
\]
where $\lim$ (resp. $\colim$) is the pseudolimit (resp. pseudocolimit)
for strict 2-categories, cf. \cite{DD14}, Prop.~2.1.5.
In particular, by a {\it pro-algebraic stack} we mean a
2-pro-object of the 2-category ${\rm\bf AlgStack}$ of
algebraic stacks.
\end{defi}

The index 2-category for defining $\Pi_1$ will be a 2-category
of factorizations similar to that of
Definition~\ref{defi:cats_of_factorizations}, with the difference
that the \'etale part $\sE\to S$ is allowed to be an algebraic
stack rather than an algebraic space. For simplicity, we use again
the notation $\sfE^{\surj}(\mathscr{X}/S)$ although to be fully
consistent, the category defined in~\ref{defi:cats_of_factorizations}
should be denoted $\sfE^{\rm surj,rep}(\mathscr{X}/S)$ to indicate
that $\sE\to S$ is representable by algebraic spaces. No confusion
is likely to occur since the former definition is not used anymore
in the present section of the article.

\begin{defi}
Let $\mathscr{X}/S$ be a flat finitely presented algebraic stack.
We define ${\sf E}^{\rm surj}(\mathscr{X}/S)$ to be the
following 2-category:
\begin{itemize}
\item objects are factorizations $\sX\xrightarrow{h}\sE\to S$
where $\sE/S$ is an \'etale, finitely presented algebraic stack
and $h$ is surjective;
\item 1-arrows
$(\sX\xrightarrow{h}\sE\to S)\to(\sX\xrightarrow{h'}\sE'\to S)$
are pairs $(f,\alpha)$, with $f\colon \sE \to \sE'$ and
$\alpha\colon fh\to h'$ giving a 2-commutative diagram:
\[
\begin{tikzcd}[row sep=tiny]
& \sE \ar[rd] \ar[dd, "f"] & \\
\sX \ar[ru, "h"] \ar[rd, "h'"'] & & S ; \\
& \sE' \ar[ru] &
\end{tikzcd}
\]
\item 2-arrows $(f,\alpha)\to(g,\beta)$ are $2$-morphisms
$u\colon f\to g$ giving a commutative diagram:
\[
\begin{tikzcd}
fh \ar[r, "uh"] \ar[rd, "\alpha"']& gh \ar[d, "\beta"]\\
& h'.
\end{tikzcd}
\]
\end{itemize}
\end{defi}

We emphasize that for a factorization $\sX\to\sE\to S$ in
${\sf E}^{\rm surj}(\mathscr{X}/S)$, the requirement that
$\sE\to S$ be quasi-separated will be crucial in the sequel,
cf Remark \ref{rmk:qs_importance}(1). On the contrary, the
condition of quasi-compactness of $\sE\to S$
is automatic from the same property for $\sX\to S$.

\begin{lemma}\label{lemma:Esurj_cofiltered}
Let $\mathscr{X}/S$ be a flat finitely presented algebraic stack.
The 2-category ${\sf E}^{\rm surj}(\mathscr{X}/S)$ is small
and 2-cofiltered. Moreover, it is equivalent to a 1-category.
\end{lemma}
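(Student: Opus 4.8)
The plan is to extract one structural fact and derive the three assertions from it: for every object $\sX\xrightarrow{h}\sE\to S$ of $\sfE^{\rm surj}(\sX/S)$, the morphism $h$ is faithfully flat and locally of finite presentation, hence an fppf cover and in particular an epimorphism of stacks. First I would prove that \emph{any} $S$-morphism from the flat $S$-stack $\sX$ to an étale $S$-stack $\sE$ is flat: passing to atlases reduces this to a morphism of algebraic spaces $T\to V$ over $S$ with $V\to S$ étale and $T\to S$ flat, and such a morphism is flat because $T\to T\times_SV$ is a section of the étale morphism $T\times_SV\to T$, hence an open immersion. Local finite presentation of $h$ follows by cancellation from the fact that $\sX\to S$ and $\sE\to S$ are finitely presented, and $h$ is surjective by hypothesis; so $h$ is an fppf cover.

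Two quick consequences. First, $\sfE^{\rm surj}(\sX/S)$ is equivalent to a $1$-category. Since stacks form a $(2,1)$-category so does $\sfE^{\rm surj}(\sX/S)$, so it suffices to check that any two parallel $1$-arrows $(f,\alpha),(g,\beta)\colon E\to E'$ admit at most one $2$-arrow between them. A $2$-arrow is a $2$-morphism $u\colon f\to g$ of $1$-morphisms $\sE\to\sE'$ with $\beta\circ(uh)=\alpha$; hence $uh$ is forced to equal $\beta^{-1}\circ\alpha$, and since $h$ is an epimorphism the restriction functor $\sHom(\sE,\sE')\to\sHom(\sX,\sE')$ is faithful, so $u$ is determined. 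Thus every $\Hom$-groupoid of $\sfE^{\rm surj}(\sX/S)$ is equivalent to a discrete one, which gives the claim and also makes condition~(3) of $2$-cofilteredness automatic. Second, smallness: up to replacing $\sfE^{\rm surj}(\sX/S)$ by a skeleton it is enough that it be essentially small, and this holds because every finitely presented étale quasi-separated algebraic $S$-stack has a presentation $[V/R]$ with $V$ and $R$ finitely presented and étale over $S$ — a set's worth of data up to equivalence — while the $1$-morphisms over $S$ from the fixed $\sX$ into a given such stack form an essentially small category.

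It then remains to verify conditions (1) and (2), the usual cofilteredness conditions for $1$-categories, together with nonemptiness. Nonemptiness is clear: $\sX\to S$ being flat and finitely presented, its image is an open quasi-compact $U\subset S$, and $\sX\to U\to S$ is an object. For (1), given $E_1=(\sX\to\sE_1\to S)$ and $E_2=(\sX\to\sE_2\to S)$, I would let $\sE_3$ be the image of $\sX\to\sE_1\times_S\sE_2$: the target is étale, quasi-separated and finitely presented over $S$, the morphism from $\sX$ is flat by the structural fact and locally of finite presentation, hence open, so $\sE_3$ is an open quasi-compact — thus finitely presented étale — substack, $\sX\to\sE_3$ is an fppf cover, and the two projections are arrows $E_3\to E_1$, $E_3\to E_2$. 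For (2), given parallel $1$-arrows $(f,\phi),(g,\psi)\colon E_j\to E_i$, I would set $\sE:=\sE_j\times_{(f,g),\,\sE_i\times_S\sE_i,\,\Delta}\sE_i$ with $\Delta=\Delta_{\sE_i/S}$; since $\sE_i\to S$ is étale, $\Delta$ is representable and étale (a section of the étale projection $\mathrm{pr}_1$, by cancellation), so the projection $q\colon\sE\to\sE_j$ is representable étale and $\sE\to S$ is étale. The $2$-isomorphism $\psi^{-1}\circ\phi\colon fh_j\to gh_j$ defines a lift of $h_j$ to a morphism $\sX\to\sE$; letting $\sE_k\subset\sE$ be its open quasi-compact image, $\sX\to\sE_k\to S$ is an object of $\sfE^{\rm surj}(\sX/S)$, the composite $e\colon\sE_k\hookrightarrow\sE\xrightarrow{q}\sE_j$ is an arrow $E_k\to E_j$, and the tautological $2$-isomorphism between $f\circ q$ and $g\circ q$ restricts along $\sE_k\hookrightarrow\sE$ to the required $2$-isomorphism between the two composites $E_k\to E_i$.

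The part I expect to be most delicate is the bookkeeping in step~(2): one must check that the $2$-isomorphism produced from the tautological identification of $f\circ q$ with $g\circ q$, once restricted along $e$, is genuinely compatible with the structural $2$-cells $\phi,\psi$ and with the $2$-commutativity data of arrows in $\sfE^{\rm surj}(\sX/S)$. Everything else is routine once flatness of morphisms into étale $S$-stacks is available, since that is exactly what guarantees that the image substacks $\sE_3$ and $\sE_k$ inherit étaleness and finite presentation over $S$ and that $\sX$ is an fppf cover of them.
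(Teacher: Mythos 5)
Your proof is correct, and for nonemptiness, smallness, and conditions (1) and (2) it is essentially the paper's argument: the same open image of $\sX$ in $\sE_1\times_S\sE_2$ for common refinements, and the same $2$-fibre product of $\sE_j$ against the diagonal of $\sE_i$ for equalizing parallel arrows (you are in fact slightly more careful than the paper in (2), where you pass to the image to guarantee that $\sX\to\sE_k$ is surjective, a point the paper glosses over). The one place where you genuinely diverge is condition (3) and the reduction to a $1$-category: the paper forms $\sI=\underline\Isom_{\sE}(f,g)$, notes that $\sI\to\sE$ is \'etale so its diagonal is an open immersion, shows that $(uh,vh)$ factors through that diagonal, and uses surjectivity of $h$ to conclude that the open locus where $u=v$ is all of $\sE$; you instead isolate the structural fact that $h\colon\sX\to\sE$ is an fppf cover (flatness via the graph trick into $\sX\times_S\sE$, finite presentation by cancellation, surjectivity by hypothesis), so that precomposition with $h$ is faithful and the forced identity $uh=\beta^{-1}\circ\alpha$ pins down $u$. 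Your route needs flatness of $h$ where the paper only needs surjectivity plus unramifiedness of $\sI\to\sE$, but both inputs are freely available here, and your version has the advantage of making condition (3) an immediate corollary of the discreteness of the $\Hom$-groupoids rather than a separate construction; it also makes explicit the fppf-cover property of $h$ that the paper uses tacitly elsewhere (e.g.\ in \ref{Pi1 via smooth atlases}).
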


\begin{proof}
Since $\sX$ and $\sE$ in ${\sf E}^{\rm surj}(\sX/S)$ are all finitely presented, it is standard to deduce that ${\sf E}^{\rm surj}(\sX/S)$ is a small 2-category. Moreover, ${\sf E}^{\rm surj}(\sX/S)$ is nonempty, because it contains the image of $\sX$ in $S$, which is open in $S$ hence \'etale over $S$. Next, we check the three conditions for 2-cofilteredness. \\

\noindent (1) Given two factorizations $h:\sX\to\sE$ and
$h':\sX\to\sE'$, there is the common refinement
$\sX\to\sE\times_S\sE'$ and 2-commutative diagram
\[
\begin{tikzcd}
\sX \arrow[rrd, bend left=15, "h"] \arrow[rd] \arrow[rdd, bend right=15, "h'" swap] & & \\
& \sE\times\sE' \arrow[r] \arrow[d] & \sE \arrow[d] \\
& \sE' \arrow[r] & S
\end{tikzcd}
\]
Take the image $\sE''$ of $\sX\to\sE\times\sE'$. Then
$\sE''$ is again an \'etale finitely presented $S$-stack and
$h'':\sX\to\sE''$ is a common refinement of $h$ and $h'$ in
${\sf E}^{\rm surj}(\mathscr{X}/S)$. \\

\noindent (2) Given two morphisms $(f,\alpha)$ and $(g,\beta)$
\[
\begin{tikzcd}[column sep=large]
& \sX \arrow[d,"h"] \arrow[ld,dashed,bend right=20,"h''"']
\arrow[rd,bend left=20,"h'"] & \\
\sE'' \arrow[r, dashed, "{(k,\gamma)}"] &
\sE \arrow[r, shift left, "{(f,\alpha)}"]
\arrow[r, shift right, "{(g,\beta)}" swap] & \sE'
\end{tikzcd}
\]
we want to find a third morphism $(k,\gamma):\sE''\to\sE$
and a 2-isomorphism $u:fk\to gk$. For this we consider the
2-fibred product:
\[
\begin{tikzcd}[column sep=large]
\sE'' \arrow[r] \arrow[d,"k"] & \sE' \arrow[d,"\Delta"] \\
\sE \arrow[r,"{(f,g)}"] & \sE'\times_S\sE'.
\end{tikzcd}
\]
Then $u$ is given by definition. Moreover, the morphisms
$h:\sX\to \sE$ and $h':\sX\to\sE'$ and the 2-commutativity
isomorphisms
\[
fh \stackrel{\alpha}{\tooo} h' \stackrel{\beta^{-1}}{\tooo} gh
\]
provide a morphism $(h,h'):\sX\to \sE''$. \\

\noindent (3) Given two morphisms $(f,\alpha)$, $(g,\beta)$
and two 2-morphisms $u,v:(f,\alpha)\to (g,\beta)$:
\[
\begin{tikzcd}[column sep=huge,row sep=huge]
& \sX \arrow[d,"h"] \arrow[ld,dashed,bend right=20,"h''"']
\arrow[rd,bend left=20,"h'"] & \\
\sE'' \arrow[r, dashed, "{(k,\gamma)}"] &
\sE \arrow[r, shift left, "{(f,\alpha)}",bend left=20, ""{name=U1,below,pos=0.3}, ""{name=U2,below,pos=0.7}]
\arrow[r, shift right, "{(g,\beta)}" swap,bend right=20, ""{name=D1,above,pos=0.3}, ""{name=D2,above,pos=0.7}] & \sE'
\arrow[Rightarrow, from=U1, to=D1,"\,u"]
\arrow[Rightarrow, from=U2, to=D2,"\,v"]
\end{tikzcd}
\]
we want to find a third morphism $(k,\gamma):\sE''\to\sE$
such that $uk=vk$. For this we view $f$ and $g$ as $\sE$-valued
points of the stack $\sE'$ and $u,v$ as sections of the Isom
functor $I\defeq \underline\Isom_{\sE}(f,g)\to \sE$, that is
$u,v:\sE\to\underline\Isom_{\sE}(f,g)$. Since the diagonal of $\sE'$
is an \'etale morphism, the map $I\to\sE$ is representable and
\'etale, so its diagonal is an open immersion.
We consider the fibred product:
\[
\begin{tikzcd}[column sep=large]
\sE'' \arrow[r] \arrow[d,hook] & I \arrow[d,hook,"\ \Delta"] \\
\sE \arrow[r,"{(u,v)}"] & I\times_{\sE} I.
\end{tikzcd}
\]
The 2-commutativity isomorphisms
\[
fh \stackrel{\alpha}{\tooo} h' \stackrel{\beta^{-1}}{\tooo} gh
\]
provide a morphism $\sX\to I$. Moreover, the conditions
$\beta\circ uh=\beta\circ vh=\alpha$ ensure that
$(uh,vh)=(\beta^{-1}\alpha,\beta^{-1}\alpha)$, that is, we
have a commutative square:
\[
\begin{tikzcd}[column sep=large]
\sX \arrow[r,"\beta^{-1}\alpha"] \arrow[d,"h"'] &
I \arrow[d,hook,"\ \Delta"] \\
\sE \arrow[r,"{(u,v)}"] & I\times_{\sE} I.
\end{tikzcd}
\]
We deduce a morphism $h'':\sX\to\sE''$. Moreover, since we
have the diagram
\[
\begin{tikzcd}
\sX \arrow[r, "h''"] \arrow[rd, two heads, "h" swap] &
\sE'' \arrow[d, hook, "k"] \\
& \sE
\end{tikzcd}
\]
where the map $h$ is surjective, the vertical inclusion is
in fact an isomorphism. Hence the two 2-morphisms $u,v$ are
equalized by an isomorphism $k:\sE''\to\sE$. In particular, it
means that for any such two morphisms $(f,\alpha)$ and $(g,\beta)$,
there is at most one 2-isomorphism between them, thus
${\sf E}^{\rm surj}(\sX/S)$ is equivalent to a 1-category.
\end{proof}

\begin{defi} \label{defi fund pro groupoid}
Let $\mathscr{X}/S$ be a flat finitely presented algebraic
stack. We define the {\em \'etale fundamental pro-groupoid}
$\Pi_1(\mathscr{X}/S)$ of $\sX$ to be the pro-algebraic stack
\setlength{\arraycolsep}{1mm}
\begin{eqnarray*}
\Pi_1(\mathscr{X}/S): {\sf E}^{\rm surj}(\mathscr{X}/S) &
\longrightarrow & {\rm\bf AlgStack}_S \\
\{\sX\to\sE\} & \longmapsto & \sE.
\end{eqnarray*}
\end{defi}

The pro-algebraic stack $\Pi_1(\mathscr{X}/S)$ is pro-\'etale
by definition, and it comes with a canonical morphism
$\sX\to\Pi_1(\sX/S)$ which is unique up to a unique
2-isomorphism. This object defines a 2-functor
\[
\Pi_1: {\rm\bf FlStack}_S \longrightarrow
2{\text -}{\rm Pro}({\rm\bf EtStack}_S)
\]
from the 2-category of flat finitely presented algebraic stacks
over $S$ to the 2-category of pro-\'etale stacks over $S$. It is
tautological from its definition that the 2-functor $\Pi_1(-/S)$
is pro-left adjoint to the inclusion
${\rm\bf EtStack}_S \hookrightarrow {\rm\bf FlStack}_S$.
Finally, if $\sX/S$ is moreover separable, the space of connected
components $\pi_0(\sX/S)$ is a member of the category
${\sf E}^{\rm surj}(\mathscr{X}/S)$. It follows that there is a
morphism $\Pi_1(\mathscr{X}/S)\to \pi_0(\sX/S)$ with target the
constant 2-pro-object. This morphism is easily seen to be universal
for morphisms from $\Pi_1(\mathscr{X}/S)$ to an \'etale algebraic
space; we call it the {\em coarse moduli space}.

\begin{noth}{$\Pi_1$ via smooth atlases}
\label{Pi1 via smooth atlases}
Now let us assume that $\sX$ is separable. Let $U\to\sX$ be a smooth atlas with $U$ finitely presented, and $R=U\times_{\sX}U$. Note that, because of quasi-compactness and quasi-separation of $\sX$, we can always choose $U\to\sX$ to be quasi-compact and quasi-separated. Indeed, we can find a quasi-compact algebraic space $U_0$ as a smooth atlas of $\sX$, then by \cite{SP19} Tag~\href{https://stacks.math.columbia.edu/tag/050Y}{050Y}, $U_0\to\sX$ is quasi-compact. Taking an affine Zariski covering $U\to U_0$ provides an atlas which is quasi-compact and quasi-separated over $\sX$. Now since $\sX$ is finitely presented, a quasi-compact quasi-separated $U\to\sX$ is also finitely presented, hence we can take $\pi_0$ of $U$ and $R$.
In the sequel, for simplicity let us write $\pi_0(U)$ for
$\pi_0(U/S)$. Therefore the
groupoid presentation
$R\rightrightarrows U$ of $\sX$ induces a 2-commutative diagram
\[
\begin{tikzcd}
R \arrow[r, shift left] \arrow[r, shift right] \arrow[d] & U \arrow[r] \arrow[d] & \mathscr{X} \arrow[d] \\
\pi_0(R) \arrow[r, shift left] \arrow[r, shift right] & \pi_0(U) \arrow[r] & \lbrack\pi_0(U)/\pi_0(R)\rbrack
\end{tikzcd}
\]
where $[\pi_0(U)/\pi_0(R)]$ is the quotient stack of the groupoid
closure of the pregroupoid $(\pi_0(R)\rightrightarrows\pi_0(U))$.
For details on pregroupoids and groupoid closures, see
Section~\ref{sec:groupoid_closure}. The construction of groupoid
closures works well for pregroupoids in objects of the category of
\'etale $S$-algebraic spaces, cf. Remark \ref{rmk:groupoid_closure}.
In particular, the groupoid closure
$(\pi_0(R)^{\gpd}\rightrightarrows\pi_0(U))$ is an \'etale groupoid,
and the quotient $[\pi_0(U)/\pi_0(R)]$ is an \'etale stack over $S$,
see Corollary~\ref{coro:coequalizer}.
Since moreover we have a surjection $R\to \pi_0(R)$, the
quasi-compactness of $R$ is inherited by $\pi_0(R)$ and this implies
that $[\pi_0(U)/\pi_0(R)]$ is finitely presented. Hence the
factorization $\sX\to[\pi_0(U)/\pi_0(R)]$
is an object of ${\sf E}^{\rm surj}(\sX/S)$.
\end{noth}

\begin{defi}
Let $\mathscr{X}/S$ be a flat, finitely presented, separable algebraic stack. We define ${\sf E}^{\rm cov}(\mathscr{X}/S)$ to be the full
subcategory of ${\sf E}^{\rm surj}(\mathscr{X}/S)$, which consists
of objects of the form
\[
\sX\to \lbrack\pi_0(U/S)/\pi_0(R/S)\rbrack,
\]
where $U\to\sX$ is a smooth atlas with $U$ finitely presented
and $R\defeq U\times_{\sX}U$.
\end{defi}

\begin{lemma}
The inclusion functor
$i:{\sf E}^{\rm cov}(\mathscr{X}/S)\hookrightarrow
{\sf E}^{\rm surj}(\mathscr{X}/S)$ is initial. In particular,
the full subcategory ${\sf E}^{\rm cov}(\mathscr{X}/S)$ is
cofiltered.
\end{lemma}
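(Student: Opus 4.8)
The plan is to verify that, for every object $\sX\xrightarrow{h}\sE\to S$ of ${\sf E}^{\rm surj}(\sX/S)$, the comma category of objects of ${\sf E}^{\rm cov}(\sX/S)$ mapping to it is nonempty and connected; this is what initiality amounts to here, all the categories in sight being equivalent to $1$-categories by Lemma~\ref{lemma:Esurj_cofiltered}. Once nonemptiness of all such comma categories is established, the cofilteredness of ${\sf E}^{\rm cov}(\sX/S)$ follows formally from the $2$-cofilteredness of ${\sf E}^{\rm surj}(\sX/S)$.

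The heart of the matter is nonemptiness, for which the key idea is to pull back an atlas of $\sE$ along $h$. Since $\sE$ is an \'etale, finitely presented algebraic stack over $S$ it is Deligne--Mumford, so I may choose an \'etale atlas $W\to\sE$ with $W$ a quasi-compact, quasi-separated algebraic space, finitely presented over $S$; then $W$ and $R_W\defeq W\times_\sE W$ are \'etale and quasi-compact quasi-separated over $S$, and $(R_W\rightrightarrows W)$ presents $\sE$. Fixing in addition a quasi-compact quasi-separated smooth atlas $U_0\to\sX$ as in~\ref{Pi1 via smooth atlases}, I set $U\defeq U_0\times_\sE W$, where $U_0\to\sE$ is $h$ precomposed with $U_0\to\sX$. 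Then $U$ is an algebraic space, $U\to\sX$ is a smooth surjective atlas, and $U\to S$ is flat, finitely presented and separable — separable because a smooth morphism onto a geometrically reduced target has geometrically reduced source. Writing $R\defeq U\times_\sX U$, the projection $U\to W$ yields a morphism of groupoids $(R\rightrightarrows U)\to(R_W\rightrightarrows W)$, and applying $\pi_0(-/S)$ — a functor $\textbf{Spb}_S\to\textbf{Et}_S$ which restricts to the identity on $\textbf{Et}_S$, being left adjoint to the inclusion $\textbf{Et}_S\hookrightarrow\textbf{Spb}_S$ — turns this into a morphism of pregroupoids $(\pi_0(R/S)\rightrightarrows\pi_0(U/S))\to(R_W\rightrightarrows W)$ with \'etale target groupoid. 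Passing to groupoid closures (Section~\ref{sec:groupoid_closure}) gives a morphism $[\pi_0(U/S)/\pi_0(R/S)]\to[W/R_W]=\sE$; by construction it is $2$-compatible with the maps out of $\sX$, since both $h$ and the composite $\sX\to[\pi_0(U/S)/\pi_0(R/S)]\to\sE$ are induced by the atlas-level datum $U\to W$, $R\to R_W$. This exhibits an object of ${\sf E}^{\rm cov}(\sX/S)$ over $(\sX\to\sE)$.

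For connectedness I would use the $2$-cofilteredness of ${\sf E}^{\rm surj}(\sX/S)$. Given two objects of the comma category arising from atlases $U_1\to\sX$ and $U_2\to\sX$, the fibre product $U_3\defeq U_1\times_\sX U_2$ is again an algebraic space and a quasi-compact quasi-separated finitely presented smooth atlas of $\sX$, and its two projections produce morphisms in ${\sf E}^{\rm cov}(\sX/S)$ from $[\pi_0(U_3/S)/\pi_0(R_3/S)]$ onto $[\pi_0(U_i/S)/\pi_0(R_i/S)]$; composing with the structure maps to $\sE$ gives two $1$-arrows from $[\pi_0(U_3/S)/\pi_0(R_3/S)]$ to $\sE$. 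By condition~(2) in the definition of $2$-cofilteredness, applied in ${\sf E}^{\rm surj}(\sX/S)$, these two arrows are identified after a further refinement, which by the nonemptiness just proved may be chosen in ${\sf E}^{\rm cov}(\sX/S)$; concatenating the resulting arrows produces a zig-zag in the comma category joining the two given objects. Running the same scheme for conditions (1)--(3) — dominating objects and arrows of ${\sf E}^{\rm surj}(\sX/S)$ by ones coming from ${\sf E}^{\rm cov}(\sX/S)$ — yields the asserted cofilteredness of ${\sf E}^{\rm cov}(\sX/S)$.

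I expect the main difficulty to lie not in any computation but in the coherence bookkeeping of the nonemptiness step: checking that $h\colon\sX\to\sE$ (hence its base changes $U\to W$, $R\to R_W$) is flat, finitely presented and separable, so that $\pi_0(U/S)$, $\pi_0(R/S)$ are representable and functorial, and above all verifying that the functoriality of $\pi_0$ and of the groupoid-closure construction genuinely combine into a $1$-morphism $[\pi_0(U/S)/\pi_0(R/S)]\to\sE$ in ${\sf E}^{\rm surj}(\sX/S)$, i.e.\ one that is $2$-commutative with the canonical morphism $\sX\to[\pi_0(U/S)/\pi_0(R/S)]$.
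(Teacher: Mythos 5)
Your proof is correct and follows essentially the same route as the paper: the key step in both is to dominate a given factorization $\sX\to\sE$ by pulling back an \'etale atlas of $\sE$, choosing a smooth finitely presented atlas $U$ of the resulting stack, and using functoriality of $\pi_0$ and of groupoid closures to produce a morphism $[\pi_0(U)/\pi_0(R)]\to\sE$ compatible with the maps from $\sX$. The only difference is that the paper dispenses with your explicit connectedness/zig-zag argument by noting that $i$ is fully faithful, so that by the dual of \cite{SGA4.1}, Exp.~I, Prop.~8.1.3(c), initiality reduces to the domination condition alone.
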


\begin{proof}
For the definition of initial functor, see \cite{SP19},
Tag~\href{https://stacks.math.columbia.edu/tag/09WN}{09WN}.
Since $i$ is fully faithful, by the dual version of
Prop.~8.1.3 (c) in \cite{SGA4.1} Expos\'e I, we only need to
verify that any object of ${\sf E}^{\rm surj}$ can be
dominated by an object of ${\sf E}^{\rm cov}$, according to
condition F 1) in {\em loc. cit}.

Let $\{\sX\to\sE\}\in {\sf E}^{\rm surj}(\sX/S)$. Choose an \'etale finitely presented atlas $E\to\sE$, and a smooth finitely presented
atlas $U\to\sX\times_{\sE}E$. Let $R=U\times_{\sX}U$ and
$F=E\times_{\sE}E$. Since $E,F$ are \'etale $S$-spaces, the two
morphisms $U\to E$ and $R\to F$ factor through their $\pi_0$.
Taking groupoid closures and using functoriality of stack
quotients (\cite{SP19},
Tag~\href{http://stacks.math.columbia.edu/tag/04Y3}{04Y3}),
we obtain a 2-commutative diagram:
\[
\begin{tikzcd}
R \arrow[r, shift left] \arrow[r, shift right] \arrow[d] & U \arrow[r] \arrow[d] & \sX \arrow[d] \\
\pi_0(R) \arrow[r, shift left] \arrow[r, shift right] \arrow[d]
& \pi_0(U) \arrow[r] \arrow[d]
& \lbrack\pi_0(U)/\pi_0(R)\rbrack \arrow[d] \\
F \arrow[r, shift left] \arrow[r, shift right] & E \arrow[r] & \sE
\end{tikzcd}
\]
The right column is a morphism in ${\sf E}^{\rm surj}(\sX/S)$,
hence
$\Hom_{{\sf E}^{\rm surj}}
\big(i\big([\pi_0(U)/\pi_0(R)]\big),\sE\big)\neq\emptyset$
and $i$ is an initial functor.
\end{proof}

Therefore the cofiltered category ${\sf E}^{\rm cov}(\sX/S)$,
seen as a 2-cofiltered 2-category, defines the same object
$\Pi_1(\sX/S)$ inside the 2-category
$2{\text -}{\rm Pro}({\rm\bf EtStack}_S)$:
\[
\Pi_1(\sX/S)\ \defeq \ \lim_{{\sf E}^{\rm surj}(\sX/S)}~\sE
\ =\ \lim_{{\sf E}^{\rm cov}(\sX/S)}~[\pi_0(U)/\pi_0(R)].
\]
Note that the stacks $[\pi_0(U)/\pi_0(R)]$ are \'etale
gerbes over the algebraic space $\pi_0(U)/\pi_0(R)=\pi_0(\sX/S)$.
The expression as a limit over ${\sf E}^{\rm cov}(\sX/S)$
is sometimes useful for computing $\Pi_1$.


\begin{prop} \label{prop:Pi_BG}
Let $G$ be a smooth group scheme over $S$. Then we have a
canonical isomorphism $\Pi_1(BG/S)\simeq B(\pi_0(G)/S)$.
In particular, the formation of $\Pi_1$ commutes with base
change in the special case of classifying stacks.
\end{prop}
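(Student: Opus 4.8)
The plan is to compute $\Pi_1(BG/S)$ using the description as a limit over the cofinal subcategory ${\sf E}^{\rm cov}(BG/S)$, which by \ref{Pi1 via smooth atlases} is built from smooth atlases of $BG$. First I would take the canonical atlas $S\to BG$, for which $R=S\times_{BG}S=G$, so that the groupoid $(G\toto S)$ presenting $BG$ is the action groupoid of $G$ on a point. Applying $\pi_0$ to this presentation yields the pregroupoid $(\pi_0(G/S)\toto S)$; since $G\to S$ is smooth with connected identity component giving $\pi_0(G/S)$ its natural group structure (it is the \'etale group scheme of connected components, a quotient of $G$), the pregroupoid $(\pi_0(G/S)\toto S)$ is already a groupoid, namely the action groupoid of the \'etale group scheme $\pi_0(G/S)$ on $S$. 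Its groupoid closure is itself, and the quotient stack is $B(\pi_0(G/S))$ by definition of the classifying stack. Thus the object $\{BG\to B(\pi_0(G/S))\}$ lies in ${\sf E}^{\rm cov}(BG/S)$.

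Next I would argue that this object is \emph{terminal} in ${\sf E}^{\rm surj}(BG/S)$, which immediately identifies the pro-object $\Pi_1(BG/S)$ with the constant pro-object $B(\pi_0(G/S))$. Since the category is cofiltered, terminality is equivalent to: every object $\{BG\to\sE\}$ admits a unique (up to unique $2$-isomorphism) morphism to $\{BG\to B(\pi_0(G/S))\}$. Existence follows from the universal property of $\Pi_1$ (pro-left adjoint to the inclusion of \'etale stacks) together with the fact that any surjection $BG\to\sE$ onto an \'etale finitely presented stack factors through $B(\pi_0(G/S))$: concretely, pulling back an atlas $E\to\sE$ and a smooth atlas of $BG\times_\sE E$ and running the construction of the previous lemma produces a morphism $B(\pi_0(U)/\pi_0(R))\to\sE$, and one checks $\pi_0(U)/\pi_0(R)\simeq \pi_0(BG/S)=S$ while the gerbe structure is controlled by $\pi_0(G/S)$, so the refinement is dominated by $B(\pi_0(G/S))$ itself. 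Uniqueness of the morphism follows because ${\sf E}^{\rm surj}(BG/S)$ is equivalent to a $1$-category (Lemma~\ref{lemma:Esurj_cofiltered}) and because a morphism of \'etale gerbes over $S$ out of $B(\pi_0(G/S))$ is rigid: it is determined by its effect on the band, i.e. by a homomorphism of \'etale group schemes, and the morphism must be compatible with the fixed maps from $BG$.

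For the final sentence about base change, I would invoke that $\pi_0$ commutes with arbitrary base change (Lemma~\ref{lemma:base change for pi0}), hence so does $\pi_0(G/S)$ as an \'etale group scheme, and the formation of a classifying stack $BH$ visibly commutes with base change in $H$; combining these gives $\Pi_1(BG/S)\times_S S'\simeq B(\pi_0(G/S)\times_S S')\simeq B(\pi_0(G_{S'}/S'))\simeq \Pi_1(BG_{S'}/S')$. The main obstacle I anticipate is the terminality argument: one must carefully verify that an arbitrary $\{BG\to\sE\}\in{\sf E}^{\rm surj}$ really is dominated by $B(\pi_0(G/S))$ and not merely by some a priori larger gerbe $B(\pi_0(U)/\pi_0(R))$ coming from a non-canonical atlas — this requires knowing that the groupoid closure of $(\pi_0(R)\toto\pi_0(U))$ for $U=BG\times_\sE E$ recovers exactly the action groupoid of $\pi_0(G/S)$, which should follow from the fact that $BG$ is already a gerbe banded by $\pi_0(G/S)$ over $\pi_0(BG/S)=S$ and that $\pi_0$ is exact enough (right exact, compatible with the relevant fibre products of \'etale spaces) to preserve this structure.
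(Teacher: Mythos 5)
Your opening computation agrees with the paper's endgame: for the atlas $S\to BG$ one has $R=S\times_{BG}S\simeq G$, the pregroupoid $(\pi_0(G/S)\toto S)$ is already a groupoid because $\pi_0$ of a smooth (hence separable) scheme commutes with fibre products over $S$, and its quotient stack is $B(\pi_0(G)/S)$. But two things go wrong afterwards. First, the universal property is stated in the wrong direction: you call the object $\{BG\to B(\pi_0(G)/S)\}$ \emph{terminal} and ask for morphisms \emph{to} it, whereas for the cofiltered limit defining $\Pi_1(BG/S)$ to collapse onto a constant pro-object you need that object to be \emph{initial} (more precisely, the one-object subcategory it spans must be an initial functor), i.e.\ it must admit a morphism to every other factorization $\{BG\to\sE\}$. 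The property you actually go on to sketch --- that every surjection $BG\to\sE$ onto an \'etale finitely presented stack factors through $B(\pi_0(G)/S)$ --- is the correct (initial) one, so this is a fixable inconsistency of wording, but as written the reduction is not coherent.

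Second, and more seriously, the domination step is a genuine gap, and you flag it yourself as ``the main obstacle'' without resolving it. The single atlas $S\to BG$ does not refine an arbitrary smooth atlas $U\to BG$ (a map $S\to U$ over $BG$ would be a section of the associated torsor), so you cannot produce the required morphism $B(\pi_0(G)/S)\to[\pi_0(U)/\pi_0(R)]$ from it directly, and the assertion that ``the gerbe structure is controlled by $\pi_0(G/S)$'' is not an argument. The paper's key observation, which is missing from your proposal, is that for \emph{any} finitely presented smooth atlas $U\to BG$ with associated $G$-torsor $P\to U$, the composite $P\to U\to BG$ is again an atlas refining $U$ and corresponds to the \emph{trivial} torsor, since $P\times_U P\simeq G\times_S P$; for such trivial-torsor atlases one computes $U\times_{BG}U\simeq U\times_S U\times_S G$, hence $[\pi_0(U)/\pi_0(U\times_S U\times_S G)]\simeq B(\pi_0(G)/S)$. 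Thus the trivial-torsor atlases are cofinal among all atlases and their associated objects of ${\sf E}^{\rm cov}(BG/S)$ are all canonically $B(\pi_0(G)/S)$; combined with the already-proved initiality of ${\sf E}^{\rm cov}$ in ${\sf E}^{\rm surj}$, this yields the isomorphism. Your treatment of the base-change assertion (via commutation of $\pi_0$ and of $B(-)$ with base change) is fine.
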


\begin{proof}
Let $U\to BG$ be a finitely presented smooth atlas, this determines a $G$-torsor $P\to U$. Consider the refinement $P\to U$ of atlases
\[
\begin{tikzcd}
P\times_U P \arrow[r] \arrow[d] & P \arrow[r] \arrow[d] & S \arrow[d] \\
P \arrow[r] & U \arrow[r] & BG
\end{tikzcd}
\]
since $P\times_U P\simeq G\times_S P$, the left vertical arrow is a trivial $G$-torsor. Hence any smooth atlas of $BG$ is refined by an atlas corresponding to a trivial torsor, we may therefore assume that $U\to BG$ corresponds to a trivial $G$-torsor. Equivalently, it means that there is a factorization $U\to S\to BG$. From the following cartesian squares
\[
\begin{tikzcd}
U\times_S U\times_S G \rar\dar\drar[phantom, "\square"] \arrow[d] & U\times_S G \rar\dar\drar[phantom, "\square"] \arrow[d] & U \arrow[d] \\
U\times_S G \rar\dar\drar[phantom, "\square"] \arrow[d] & G \rar\dar\drar[phantom, "\square"] \arrow[d] & S \arrow[d] \\
U \arrow[r] & S \arrow[r] & BG
\end{tikzcd}
\]
we have $U\times_{BG}U\simeq U\times_S U\times_S G$. Hence the groupoid presentation of $BG$
\[
\begin{tikzcd}
U\times U\times G \arrow[r, shift left] \arrow[r, shift right] & U \arrow[r] & BG
\end{tikzcd}
\]
gives rise to the quotient stack
\[
\lbrack\pi_0(U)/\pi_0(U\times U\times G)\rbrack \simeq \lbrack\pi_0(U)/\pi_0(U)\times \pi_0(U)\times \pi_0(G)\rbrack \simeq B(\pi_0(G)/S)
\]
Since these atlases of trivial torsors are initial among all smooth atlases of $BG$, and the corresponding \'etale quotient stacks are initial in ${\sf E}^{\rm cov}(BG/S)$, we deduce the canonical isomorphism $\Pi_1(BG/S)\simeq B(\pi_0(G)/S)$.
\end{proof}

In the final part of this subsection, we explain the relation between $\Pi_1(\mathscr{X}/S)$ and the \'etale fundamental
gerbe of Borne and Vistoli \cite{BV15}, when the base $S=k$
is a field.
In {\it loc. cit.}, the authors introduced the notion of
{\em inflexible stack} over a field $k$. This notion extends
immediately to the case when the base is a finite product
of fields, e.g. a finite reduced $k$-scheme. In particular,
a separable geometrically connected stack of finite type
over a reduced $k$-scheme is inflexible and has an \'etale
fundamental gerbe (\cite{BV15}, Prop.~5.5, Th.~5.7).

\begin{prop}  \label{prop:fundamental gerbe}
{\rm (1)} Let $S$ be an artinian local scheme. Then in
the 2-category of stacks, the pro-algebraic stack
$\Pi_1(\mathscr{X}/S)$ is representable by a stack
which is an fpqc affine gerbe over $\pi_0(\sX/S)$.

\smallskip

\noindent {\rm (2)}
Let $k$ be a field, and $\sX$ a separable $k$-stack of finite
type. Let $\Pi^{\et}_{\mathscr{X}/k}\to \pi_0(\sX/k)$ denote
the \'etale fundamental gerbe of $\sX \to \pi_0(\sX/k)$
as defined in {\rm \cite{BV15}, \S~8}.
Then $\Pi^{\et}_{\mathscr{X}/k}$ is the fpqc
affine gerbe that represents the pro-algebraic stack
$\Pi_1(\mathscr{X}/k)$ in the 2-category of stacks.
\end{prop}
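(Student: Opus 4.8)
The plan is to prove (1) by recognising $\Pi_1(\sX/S)$ as a cofiltered $2$-limit of finite \'etale gerbes over $\pi_0(\sX/S)$ and invoking the Tannakian formalism for affine gerbes to see that such a limit is representable; and to prove (2), for $S=\Spec k$, by matching the indexing $2$-category ${\sf E}^{\rm surj}(\sX/k)$ with the one used by Borne and Vistoli.

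For (1), I would first reduce to the case $S=\Spec\kappa$ with $\kappa$ a field: since $S$ is artinian, topological invariance of the \'etale site along the nilimmersion $\Spec\kappa\hookrightarrow S$ ($\kappa$ the residue field, cf.~\ref{noth:completion}) identifies ${\sf E}^{\rm surj}(\sX/S)$ with ${\sf E}^{\rm surj}(\sX_\kappa/\kappa)$, where $\sX_\kappa$ is the closed fibre, again flat, finitely presented and separable. Next, by the initiality of ${\sf E}^{\rm cov}(\sX/S)$ in ${\sf E}^{\rm surj}(\sX/S)$ one has $\Pi_1(\sX/S)=\lim_{{\sf E}^{\rm cov}(\sX/S)}\Gamma_U$, where $\Gamma_U\defeq[\pi_0(U)/\pi_0(R)]$ is a finitely presented \'etale gerbe over $\pi\defeq\pi_0(\sX/S)$ by~\ref{Pi1 via smooth atlases}. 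Since $U\to\sX$ is surjective, $\pi_0(U)\to\pi$ is a finite \'etale surjection over which $\Gamma_U$ trivialises to $BG_U$, with $G_U\defeq\pi_0(R)\times_{\pi_0(U)\times_S\pi_0(U)}\pi_0(U)$ a finite \'etale group scheme over $\pi_0(U)$; and the transition maps of the system, being morphisms of gerbes over $\pi$, are faithfully flat and fppf-locally on $\pi$ of the shape $BG'\to BG$ with $G',G$ finite \'etale. I would then conclude that $\lim_U\Gamma_U$ is representable by an fpqc affine gerbe over $\pi$: there are no higher coherence obstructions because ${\sf E}^{\rm surj}(\sX/S)$ is equivalent to a $1$-category by~\ref{lemma:Esurj_cofiltered}, and fppf-locally on $\pi$ one writes $\Gamma_U\simeq BG_U$ and $\lim_U BG_U\simeq B(\lim_U G_U)$ with $\lim_U G_U$ a pro-finite-\'etale --- hence affine flat --- group scheme, the local descriptions being glued by fpqc descent. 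This gives (1), with coarse moduli space $\pi_0(\sX/S)$.

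For (2), take $S=\Spec k$; by (1), $\Pi_1(\sX/k)$ is representable by an fpqc (pro-\'etale) affine gerbe over $\pi_0(\sX/k)$ under $\sX$, representing $\lim_{{\sf E}^{\rm surj}(\sX/k)}\sE$, so it remains to identify this pro-object with the Borne--Vistoli \'etale fundamental gerbe $\Pi^{\et}_{\sX/k}$ of $\sX\to\pi_0(\sX/k)$, which by \cite{BV15}, \S~8 (and its evident extension to a base that is a finite product of fields) is the universal profinite-\'etale affine gerbe under $\sX$ over $\pi_0(\sX/k)$. To match the two, I would observe that an object $\sX\to\sE$ of ${\sf E}^{\rm surj}(\sX/k)$ has coarse moduli space $\overline{\sE}$ a finite \'etale $k$-scheme receiving a surjection from $\sX$, which (by the universal property of $\pi_0$) factors as $\sX\to\pi_0(\sX/k)\onto\overline{\sE}$; as $\sE$ is \'etale over $k$, each of its connected components is a gerbe over the corresponding point of $\overline{\sE}$, so $\sE\to\overline{\sE}$ is a gerbe, and base change along $\pi_0(\sX/k)\to\overline{\sE}$ produces a finite \'etale gerbe over $\pi_0(\sX/k)$ with a surjection from $\sX$ dominating $\sX\to\sE$. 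Hence ${\sf E}^{\rm surj}(\sX/k)$ and Borne--Vistoli's indexing category of finite \'etale gerbes over $\pi_0(\sX/k)$ under $\sX$ are cofinal in one another, so $\Pi_1(\sX/k)\simeq\Pi^{\et}_{\sX/k}$; this is consistent with, and re-derives, the representability in (1).

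The hardest step will be the representability assertion in (1): turning the abstract $2$-pro-object $\lim_U\Gamma_U$ into an honest stack forces one to dispose of the $2$-categorical bookkeeping (handled by the equivalence of ${\sf E}^{\rm surj}(\sX/S)$ with a $1$-category) and, more substantially, to control the interaction of the limit with the fpqc descent coming from the \emph{varying} finite \'etale covers $\pi_0(U)\to\pi$ that trivialise the $\Gamma_U$ --- concretely, one must check that $\lim_U G_U$ is a flat affine group scheme carrying a compatible descent datum, so that $B(\lim_U G_U)$ glues to the sought fpqc affine gerbe. A secondary point needing care in (2) is the precise comparison of our surjectivity condition on $\sX\to\sE$ with the Nori-reducedness implicit in \cite{BV15}, together with the verification that the cofiltered limit of the finite \'etale bands remains an affine band, i.e. that the class of affine gerbes is closed under the limits at hand.
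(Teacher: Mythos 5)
Your overall strategy coincides with the paper's: for (1), recognise $\Pi_1(\sX/S)$ as a cofiltered 2-limit of finitely presented \'etale gerbes $[\pi_0(U)/\pi_0(R)]$ over the finite artinian space $\pi_0(\sX/S)$ and conclude that such a limit is an fpqc affine gerbe; for (2), compare universal properties of morphisms from $\sX$ to \'etale gerbes over $\pi_0(\sX/k)$. The one genuine gap is exactly the step you flag as hardest: the assertion that the limit is representable by an affine gerbe, in particular that $\lim_U BG_U\simeq B(\lim_U G_U)$ compatibly with the \emph{varying} trivializing covers $\pi_0(U)\to\pi$. This does not follow formally: a cofiltered limit of gerbes need not be a gerbe (local nonemptiness can be lost in the limit), and your local models live over different covers, so there is no single cover over which to form $\lim_U G_U$ and descend. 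The paper closes precisely this point by citing \cite{BV15}, Prop.~3.7, which states that the projective limit of a cofiltered system of affine gerbes with affine (faithfully flat) transition morphisms is again an affine gerbe; the affineness of the transition maps is what guarantees that compatible sections survive the passage to the limit. You should either invoke that result or actually supply its proof --- as written, your argument assumes its conclusion.

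Two smaller remarks. The reduction to the residue field via topological invariance is unnecessary --- the paper works directly over the artinian local $S$, observing that quasi-finite \'etale $S$-spaces are finite $S$-schemes, so $\pi_0(\sX/S)$ is artinian and each $[\pi_0(U)/\pi_0(R)]$ is already an affine flat gerbe over it --- and it is slightly delicate, since the 2-limit is computed in the 2-category of \emph{all} $S$-stacks, not only the \'etale ones to which topological invariance applies. Also, the stabilizer you call $G_U$ must be formed from the groupoid closure $\pi_0(R)^{\gpd}$ rather than from $\pi_0(R)$ itself. Your cofinality argument for (2) is a reasonable expansion of the paper's one-line comparison of universal properties, but you should pass to the image of $\sX$ in the base change of $\sE$ along $\pi_0(\sX/k)\to\overline{\sE}$ (where $\overline{\sE}$ is the coarse space of $\sE$) in order to stay inside ${\sf E}^{\rm surj}(\sX/k)$, since surjectivity onto the full fibre product is not automatic.
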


\begin{proof}
(1) For each smooth atlas $U\to\sX$ of finite presentation,
the \'etale stack $[\pi_0(U)/\pi_0(R)]$ has coarse moduli
space $\pi_0(\sX/S)$. If $S$ is local artinian, then each
quasi-finite $S$-space is in fact a finite $S$-scheme.
In particular, $\pi_0(\sX/S)$ is artinian and
$[\pi_0(U)/\pi_0(R)]$ is an affine flat gerbe over it.
It follows from \cite{BV15}, Prop.~3.7 that the stack
which represents the projective system
$\Pi_1(\mathscr{X}/S)$ is an fpqc affine gerbe
over $\pi_0(\sX/S)$.

\smallskip

\noindent (2)
Let $\Pi$ be the fpqc affine gerbe that represents the
$\Pi_1(\mathscr{X}/k)$. From the fact that
$\Pi_1(\mathscr{X}/k)$ has coarse moduli space $\pi_0(\sX/k)$,
the same follows for $\Pi$. Then we see that both
$\sX\to\Pi$ and $\sX\to \Pi^{\et}_{\mathscr{X}/k}$
are universal among morphisms from $\sX$ to an \'etale
$\pi_0(\sX/k)$.
\end{proof}

\subsection{Pushout along $\pi_0$ of an atlas}

The key fact allowing to upgrade our result to algebraic
stacks is an analogue of the pushout property from
Lemma~\ref{prop:pushout_with_pi_0}. We establish it in
Lemma~\ref{lemma:stacky-pushout} below. For this,
we will use a strengthening of the property that
$\sX\to\pi_0(\sX/S)$ is initial for morphisms from $\sX$
to \'etale $S$-algebraic spaces.

\begin{lemma}\label{pi0 initial to unramified}
Let $\mathscr{X}/S$ be a flat, finitely presented, separable algebraic stack. Then $\sX\to\pi_0(\sX/S)$ is initial for
morphisms from $\sX$ to unramified $S$-algebraic spaces.
\end{lemma}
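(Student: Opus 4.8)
The plan is to show directly that any morphism $h\colon\sX\to T$ with $T\to S$ unramified factors, uniquely, through the canonical morphism $q\colon\sX\to\pi_0(\sX/S)$. Recall from \cite{Rom11} that, $\sX/S$ being separable, $\pi_0(\sX/S)$ is an algebraic space, \'etale and quasi-compact over $S$, with $q$ surjective. I would first record that $q$ is moreover flat and locally of finite presentation, hence an fppf covering: since $\pi_0(\sX/S)\to S$ is \'etale its diagonal is an open immersion, so its base change $\Gamma_q\colon\sX\to\sX\times_S\pi_0(\sX/S)$, $\xi\mapsto(\xi,q(\xi))$, is an open immersion, and composing with the second projection (flat and locally of finite presentation, being a base change of $\sX\to S$) exhibits $q$ as flat and locally of finite presentation. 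In particular $q$ is an epimorphism of fppf sheaves, which already gives the uniqueness part.

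For existence, the key observation is that, $T\to S$ being unramified, the diagonal $\Delta_{T/S}\colon T\to T\times_ST$ is an open immersion. I would show that $(h p_1,h p_2)\colon\sX\times_{\pi_0(\sX/S)}\sX\to T\times_ST$ — with $p_1,p_2$ the two projections — factors through $\Delta_{T/S}$. As $\Delta_{T/S}$ is an open immersion it suffices to test on geometric points: a geometric point of $\sX\times_{\pi_0(\sX/S)}\sX$ is a pair $(\bar\xi_1,\bar\xi_2)$ of $\Omega$-points of $\sX$, with $\Omega$ separably closed, over a common point $\bar s$ of $S$ and lying, by the moduli description of $\pi_0$, in one connected component $\sC$ of the fibre $\sX_{\bar s}$. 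Now $T_{\bar s}$ is unramified over $\Omega$, hence — everything over a field being flat — \'etale over $\Omega$, so it is a disjoint union of copies of $\Spec\Omega$, a discrete reduced algebraic space. Thus $h_{\bar s}$ restricted to $\sC$ is the composite of $\sC\to\Spec\Omega$ with a point inclusion, so $h\circ\bar\xi_1=h\circ\bar\xi_2$ and the pair lands on the diagonal. Composing with the two projections of $T\times_ST$, which agree on $\Delta_{T/S}$, yields $h p_1=h p_2\colon\sX\times_{\pi_0(\sX/S)}\sX\to T$.

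It then remains to descend. I would choose a smooth, finitely presented atlas $a\colon U\to\sX$ (taking $U=\sX$ if $\sX$ is already an algebraic space), as in~\ref{Pi1 via smooth atlases}. Then $q\circ a\colon U\to\pi_0(\sX/S)$ is again flat, locally of finite presentation and surjective, hence an fppf covering, and pulling back the equality just obtained along $U\times_{\pi_0(\sX/S)}U\to\sX\times_{\pi_0(\sX/S)}\sX$ shows that $h\circ a$ coequalizes the two projections $U\times_{\pi_0(\sX/S)}U\rightrightarrows U$. By fppf descent of morphisms into the sheaf $T$ along the covering $q\circ a$, the morphism $h\circ a$ descends to $g\colon\pi_0(\sX/S)\to T$; and since $a$ is an fppf covering while $g\circ q$ and $h$ agree after precomposition with $a$, we conclude $g\circ q=h$, as wanted.

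The routine ingredients are the characterization of unramifiedness of a morphism of algebraic spaces by the diagonal being an open immersion, the fact that a morphism of algebraic spaces (or stacks) factors through an open subspace of its target as soon as it does so on geometric points, and the elementary structure of algebraic spaces \'etale over a separably closed field; with these in hand there is no real obstacle beyond the geometric-point bookkeeping. The one genuinely new point compared with the already-known universal property of $\pi_0$ for \'etale targets is exactly the use of $\Delta_{T/S}$ being an open immersion in place of $T$ itself being \'etale over $S$.
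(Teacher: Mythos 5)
Your proof is correct and follows essentially the same route as the paper's: both exploit that the unramified hypothesis makes the locus where the two composites $\sX\times_{\pi_0(\sX/S)}\sX\rightrightarrows\sX\to T$ agree an open substack (you phrase this as the preimage of the open immersion $\Delta_{T/S}$, the paper as the openness of the equalizer over the connected component $\sR$ of the diagonal, which is the same object as $\sX\times_{\pi_0(\sX/S)}\sX$), and both conclude by a fibrewise argument that this open substack is everything. The only differences are presentational: the paper cites the universal property of $\pi_0$ over a field where you reprove it directly via the discreteness of $T_{\bar s}$, and it invokes the quotient presentation $\pi_0(\sX/S)=\sX/\sR$ from \cite{Rom11} where you carry out the fppf descent along an atlas by hand.
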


\begin{proof}
Let $f:\sX\to I$ be a morphism to an unramified $S$-algebraic
space $I$. According to \cite{Rom11}, Th.~2.5.2 the algebraic
space $\pi_0(\sX/S)$
is the quotient of $\sX$ by the open equivalence relation
whose graph $\sR\subset\sX\times_S\sX$ is the open connected component
of the diagonal. Therefore, in order to obtain a factorization
$\pi_0(\sX/S)\to I$ it is enough to prove that $f\pr_1=f\pr_2$ where
$\pr_1,\pr_2:\sR\to\sX$ are the projections. Let $\sZ\to\sR$ be the
equalizer of $f\pr_1$ and $f\pr_2$. Since $I$ is unramified, $\sZ$
is an open substack of $\sR$. Moreover, in each fibre above a point
$s\in S$, we have $\sZ_s=\sR_s$ because $I_s$ is \'etale over the
residue field $k(s)$ and $\sX_s\to \pi_0(\sX_s/k(s))$ is initial for
maps to \'etale $k(s)$-spaces (note that the formation of $\pi_0$
commutes with arbitrary base change). Therefore $\sZ=\sR$, so
$f\pr_1=f\pr_2$ and we are done.
\end{proof}

\begin{lemma}\label{lemma:stacky-pushout}
Let $\mathscr{X}/S$ be a flat, finitely presented, separable algebraic
stack and $U\to\sX$ a faithfully flat, finitely presented, separable
atlas (e.g. a smooth surjective atlas of finite presentation). Let $R\rightrightarrows U$
be the corresponding groupoid presentation of $\mathscr{X}$.
Consider the $2$-commutative diagram
\[
\begin{tikzcd}
U \arrow[r] \arrow[d] \arrow[dr, phantom, "\ulcorner"] & \mathscr{X} \arrow[d] \\
\pi_0(U) \arrow[r] & \lbrack\pi_0(U)/\pi_0(R)\rbrack
\end{tikzcd}
\]
and let $\sM\to S$ be either
\begin{trivlist}
\itemm{i} a Deligne-Mumford stack, or
\itemm{ii} $\sM=\Fdiv_S(\sN)$ for some algebraic stack $\sN\to S$.
\end{trivlist}
Then the natural functor
\[
\sHom(\lbrack\pi_0(U)/\pi_0(R)\rbrack,\sM)
\too
\sHom(\sX,\sM)\underset{\sHom(U,\sM)}{\times}
\sHom(\pi_0(U),\sM)
\]
is an equivalence of categories.
\end{lemma}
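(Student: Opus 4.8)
The plan is to prove the statement by descent along the faithfully flat, finitely presented atlas $\pi_0(U)\to[\pi_0(U)/\pi_0(R)]$, together with the observation that $\sX\to\pi_0(\sX/S)=\pi_0(U)/\pi_0(R)$ is initial for maps to unramified $S$-algebraic spaces (Lemma~\ref{pi0 initial to unramified}) and in particular for maps to $\Fdiv_S(\sN)$ once we know $\Fdiv_S(\sN)\to S$ has unramified diagonal. First I would reduce the two cases (i) and (ii) to a single statement about a target $\sM$ whose diagonal is unramified over $S$: for Deligne--Mumford $\sM$ this is classical; for $\sM=\Fdiv_S(\sN)$ one invokes \ref{noth:perfection}(iv), or more precisely the fact recalled there that the diagonal of $\Fdiv_S(\sN)$ is the $S$-perfection of the diagonal of $\sN$, hence a limit of representable affine monomorphisms, hence unramified. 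So from now on I assume $\Delta_{\sM/S}$ is unramified.

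The key input is a \emph{refined} pushout property. In Lemma~\ref{prop:pushout_with_pi_0}(2) we learned that
\[
\begin{tikzcd}
U \ar[r] \ar[d] & \sX \ar[d] \\
\pi_0(U) \ar[r] & \pi_0(\sX/S)
\end{tikzcd}
\]
is a pushout of \emph{sheaves}; here I would upgrade this to a $2$-pushout in the $2$-category of stacks, \emph{when tested against stacks with unramified diagonal}. Concretely, the plan is: a descent datum for $\pi_0(U)\to[\pi_0(U)/\pi_0(R)]$ relative to $\sM$ amounts to a morphism $g\colon\pi_0(U)\to\sM$ together with a $2$-isomorphism $\gamma$ between the two pullbacks to $\pi_0(R)$ satisfying the cocycle condition on $\pi_0(R)\times_{\pi_0(U)}\pi_0(R)$; giving an object of the right-hand fibre product means giving $(f\colon\sX\to\sM,\ g\colon\pi_0(U)\to\sM,$ and a $2$-iso between $f|_U$ and $g|_U)$. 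From $f|_U$ and the $2$-iso I get a descent datum on $U$ for $\sM$ relative to $R\rightrightarrows U$; I must show it descends along $\pi_0(U)\to[\pi_0(U)/\pi_0(R)]$ instead. The $2$-cocycle data for $\gamma$ is forced, by initiality: the isomorphism $\gamma$ over $\pi_0(R)$ is determined uniquely because $\Isom_{\sM}$-data over $U$ descends to $\pi_0(U)$ (using that the relevant Isom sheaves are unramified over $S$ by the unramified-diagonal hypothesis, and that $U\to\pi_0(U)$ is surjective with the universal property of $\pi_0$). Similarly the cocycle condition, being a condition in an unramified sheaf, holds over $\pi_0(R)\times_{\pi_0(U)}\pi_0(R)$ as soon as it holds after pulling back along the surjection from $R\times_U R$. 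Faithfully flat descent along $\pi_0(U)\to[\pi_0(U)/\pi_0(R)]$ then produces the desired $h\colon[\pi_0(U)/\pi_0(R)]\to\sM$, and one checks $h$ restricts to $f$ on $\sX$ again by initiality of $\sX\to\pi_0(\sX/S)$ together with the given $2$-iso. Fully faithfulness of the functor in the statement is handled the same way, using Lemma~\ref{lemma:isoms-Fdiv} in case (ii) to identify $\underline\Hom$ sheaves between two morphisms to $\Fdiv_S(\sN)$ with $\Fdiv_S$ of the corresponding $\underline\Hom$ sheaves, which again have unramified diagonal over $S$, so their sections over $\sX$ and over $\pi_0(\sX/S)$ agree by Lemma~\ref{pi0 initial to unramified}.

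The main obstacle I anticipate is the bookkeeping of $2$-categorical coherence: showing not merely that a morphism $h$ exists, but that the whole diagram of $1$-morphisms and $2$-isomorphisms is coherent, so that the functor is an equivalence and not just essentially surjective on objects. The clean way around this is precisely Lemma~\ref{pi0 initial to unramified}: because $\sX\to\pi_0(\sX/S)$ is initial for maps to unramified algebraic spaces, and $\Isom$ sheaves between morphisms to $\sM$ (or to $\Fdiv_S(\sN)$) are unramified over $S$, \emph{every} $2$-isomorphism and every coherence constraint that one needs to produce or compare is uniquely determined by its restriction along the surjections $U\to\sX$ and $\pi_0(U)\to\pi_0(\sX/S)$. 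This reduces all the $2$-categorical verifications to the already-established sheaf-level pushout of Lemma~\ref{prop:pushout_with_pi_0}(2), which is what makes the argument go through; once this principle is set up, the remaining steps are routine diagram chases that I would not spell out in full.
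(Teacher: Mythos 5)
Your case (i) follows essentially the paper's own route: identify morphisms out of $[\pi_0(U)/\pi_0(R)]$ with descent data, produce the $2$-isomorphism over $\pi_0(R)$ by factoring an Isom-valued point of $R$ through $\pi_0(R)$ via Lemma~\ref{pi0 initial to unramified}, and verify the cocycle condition after pulling back along the surjection from $R\times_{s,U,t}R$. Two bookkeeping points you wave away are not quite routine, though: the object governing descent along $\pi_0(U)\to[\pi_0(U)/\pi_0(R)]$ is the groupoid closure $\pi_0(R)^{\gpd}$, not $\pi_0(R)$, and the cocycle condition lives on $\pi_0(R\times_{s,U,t}R)$ rather than on $\pi_0(R)\times_{\pi_0(U)}\pi_0(R)$ (the paper explicitly warns these differ, since $\pi_0(R)\to\pi_0(U)\times\pi_0(U)$ can fail to be injective). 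The passage from pregroupoid descent data to morphisms out of the quotient stack is Corollary~\ref{coro:coequalizer}, which requires the appendix on groupoid closures.

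The genuine gap is your reduction of case (ii) to the unramified-diagonal case. For an arbitrary algebraic stack $\sN$ the diagonal of $\Fdiv_S(\sN)$ is \emph{not} unramified: \ref{noth:perfection}(iv) itself records that for $\sN=B\GG_m$ over $\FF_p$ this diagonal is a torsor under $\mu_{p^\infty}=\lim\mu_{p^i}$, which is pro-infinitesimal, and the ``transitions are affine monomorphisms'' argument there is made only when the diagonal of $\sN$ is already unramified, i.e.\ for $\sN$ Deligne--Mumford. Likewise the Isom sheaves you must descend from $R$ to $\pi_0(R)$ have the form $\Fdiv_S(I_0)$ for $I_0$ an arbitrary quasi-separated algebraic space (Lemma~\ref{lemma:isoms-Fdiv}), and such a perfection is not unramified over $S$ --- indeed in the application inside Theorem~\ref{theo:coperfection_stacks} one takes $\sN=V$ an affine scheme, and $\Fdiv(V)$ is far from unramified. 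So Lemma~\ref{pi0 initial to unramified} does not apply, and the factorization $\Hom_S(R,\Fdiv_S(I_0))=\Hom_S(\pi_0(R),I_0)$ that your argument needs is precisely Theorem~\ref{theo:coperfection_spaces}, i.e.\ the full algebraic-space case of the main theorem resting on the commutative algebra of Section~\ref{section:perf_of_algebras}. That is the input the paper substitutes for unramifiedness in case (ii); without it your proof does not close.
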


\begin{proof}
Throughout the proof we write $\sQ=[\pi_0(U)/\pi_0(R)]$
the quotient stack of the pregroupoid $\pi_0(R)\toto\pi_0(U)$.
First we explain precisely what is the functor $F$ of
the statement. The target of $F$ is the category with objects
the triples
$(v:\sX\to\sM,f:\pi_0(U)\to\sM,\delta:v\pi\isomto fh)$, or
in other words the 2-commutative diagrams
\[
\begin{tikzcd}[column sep=30,row sep=30]
U \ar[r,"\pi"] \ar[d,"h"'] & \sX \ar[ld,"\delta"',Rightarrow]
 \ar[d,"v"] \\
\pi_0(U) \ar[r,swap,"f"] & \sM.
\end{tikzcd}
\]
For $\sM=\sQ$, we have a canonical particular object of
this category (see~\ref{Pi1 via smooth atlases}):
\[
\begin{tikzcd}[column sep=30,row sep=30]
U \ar[r, "\pi"] \ar[d,swap, "h"]
& \sX \ar[d, "v_0"] \ar[ld,Rightarrow,"\gamma"'] \\
\pi_0(U) \ar[r,swap,"f_0"] & \sQ.
\end{tikzcd}
\]
Here is how the functor $F$ is defined. For a morphism
$g:\sQ\to\sM$, we have:
\[
F(g)=\big(v=gv_0,f=gf_0,\delta=g\gamma:gv_0\pi \to gf_0h\big).
\]
To construct a quasi-inverse for $F$, we will construct
a functor $G$ such that $GF=\id$, and an isomorphism
$\epsilon:FG\isomto\id$. This means that given
$(v,f,\delta)$, we seek to construct functorially a
morphism $g\colon \sQ\to \sM$
and 2-isomorphisms $a\colon gf_0\to f$, $b\colon gv_0\to v$
filling in a 2-commutative diagram:
\[
\begin{tikzcd}[column sep=40,row sep=40]
U \ar[r, "\pi"] \ar[d,swap, "h"]
& \sX \ar[d, "v_0"] \ar[ld,Rightarrow,"\gamma"']
\ar[rdd, bend left=25, "v",
""{name=N1,below,pos=0.45},
""{name=P1,below,pos=0.77}] & \\
\pi_0(U) \ar[r,swap,"f_0"]
\ar[rrd, bend right=25, swap,"f",
""{name=N3,above,pos=0.47},""{name=P4,above,pos=0.81}] \ar[Rightarrow, from=P1, to=P4, swap, "\delta"{pos=0.2}]
& \sQ
\ar[Leftrightarrow,from=N3,to=N1,"a"{pos=0.25},"b"{pos=0.75},dashed]
\ar[rd,swap,"g"{pos=0.5},dashed,crossing over,
""{name=N4,below,pos=0.18},
""{name=N2,above,pos=0.25}]
& \\
& & \sM.
\end{tikzcd}
\]
We use the usual notations as in
Section~\ref{sec:groupoid_closure} for the groupoid $R\toto U$,
and we complete the picture by adding in the bottom row the
pregroupoid $\pi_0(R)\toto\pi_0(U)$.
\[
\begin{tikzcd}[column sep=40,row sep=30]
R\underset{s,U,t}{\times} R
\ar[d,"l"'] \ar[r, shift left=2,"\pr_1"]
\ar[r, "c" description] \ar[r, shift right=2,"\pr_2"']& R \ar[r, shift left=1,"s"] \ar[r, shift right=1,"t"'] \ar[d,"k"'] & U \ar[r,"\pi"] \ar[d,"h"'] &
\sX \ar[d,"v_0"] \ar[rd, bend left=15,"v"] & \\
\pi_0\left(R\underset{s,U,t}{\times} R\right) \ar[r, shift left=2,"p_1"]
\ar[r, "d" description] \ar[r, shift right=2,"p_2"']& \pi_0(R) \ar[r,shift left=1,"\sigma"] \arrow[r, shift right=1,"\tau"'] &
\pi_0(U) \ar[r,"f_0"] \ar[rr, bend right=30,"f"']
& \sQ & \!\sM
\end{tikzcd}
\]
First we construct the pair $(g,a)$ using
Corollary~\ref{coro:coequalizer} on the coequalizer property of
the stack quotient $\pi_0(U)\to [\pi_0(U)/\pi_0(R)]$ on objects.
Consider $x=f\sigma$ and $y=f\tau$ viewed as $\pi_0(R)$-points
of $\sM$, and $I\defeq \underline\Isom(x,y)$.
Let $\alpha:\pi s\to\pi t$ and $\alpha_0:f_0\sigma\isomto f_0\tau$
be the canonical 2-isomorphisms. The composition
\[
\begin{tikzcd}
f\sigma k=fhs \ar[r,"\delta^{-1}s"] & v\pi s \ar[r,"v\alpha"]
& v\pi t \ar[r,"\delta t"] & fht=f\tau k
\end{tikzcd}
\]
is an isomorphism
$\widetilde\beta\colon k^*x\isomto k^*y$, that is,
a point $\widetilde\beta\colon R\to I$.

We claim that $\widetilde \beta$ factors uniquely via $\pi_0(R/S)$.
We perform separately the two cases of the statement, starting
by case (i), where $\sM\to S$ is Deligne-Mumford. Then
$I\to \pi_0(R)$ is unramified, hence
so is $I\to S$. Lemma~\ref{pi0 initial to unramified}
implies that $\widetilde\beta$ factors uniquely as
\[
R\stackrel{k}{\too} \pi_0(R) \stackrel{\beta}{\too} I.
\]
Next case is (ii), suppose $\sM=\Fdiv(\sN)$. We write
$x_0,y_0\colon \pi_0(R/S)\to \sN$ for the compositions
of $x,y$ with $\Fdiv(\sN)\to \sN$.
Let $I_0\defeq \underline\Isom(x_0,y_0)$.
As $\pi_0(R)\to S$ is perfect,
we may apply Lemma~\ref{lemma:isoms-Fdiv}, and deduce that
\[
I=\Fdiv(I_0).
\]
Then, by Theorem~\ref{theo:coperfection_spaces}:
\[
\Hom_S(R,I)=\Hom_S(R,\Fdiv(I_0))
=\Hom_S(\pi_0(R),I_0)= \Hom(\pi_0(R),\Fdiv(I_0))
=\Hom(\pi_0(R),I).
\]
Therefore $\widetilde \beta\colon R\to I$ factors uniquely
via $\pi_0(R)$.  This completes the proof of the claim.

We have obtained an isomorphism $\beta\colon x\isomto y$.
Now we check that $\beta d=\beta p_1\circ\beta p_2$ holds.
Consider the equality $\alpha c=\alpha \pr_1\circ \alpha \pr_2$:
\[
\begin{tikzcd}[column sep=180]
R\times_{s,U,t} R
\arrow[r, "{\pi s\pr_2=\pi sc}",pos=0.39,bend left=30, ""{name=U1,below,pos=0.505}, ""{name=U2,below,pos=0.755}]
\ar[r, "{\pi t\pr_2=\pi s\pr_1}",pos=0.22,
""{name=A,above,pos=0.47},
""{name=B,below,pos=0.47}]
\arrow[r, "{\pi t\pr_1=\pi tc}" swap,pos=0.35,bend right=30, ""{name=D1,above,pos=0.5}, ""{name=D2,above,pos=0.75}]
& \sX
\arrow[Rightarrow, from=U1, to=A,"\,\alpha \pr_2"]
\arrow[Rightarrow, from=B, to=D1,"\,\alpha \pr_1"]
\arrow[Rightarrow, from=U2, to=D2, crossing over, "\,\alpha c",pos=0.3]
\end{tikzcd}
\]
This gives $v\alpha c=(v\alpha \pr_1)\circ (v\alpha \pr_2)$
which, using the three relations $t \pr_1=tc$, $s \pr_1=t\pr_2$,
$s \pr_2=sc$, we can write:
\[
(\delta tc)\circ (v\alpha c)\circ (\delta^{-1} s c)=
(\delta t \pr_1)\circ (v \alpha \pr_1) \circ (\delta^{-1} s \pr_1)
\circ (\delta t \pr_2) \circ (v \alpha \pr_2) \circ (\delta^{-1} s \pr_2).
\]
Now, by definition
$\widetilde\beta=\delta t\circ v\alpha\circ \delta^{-1} s$
so the above equality becomes
$\widetilde \beta c=\widetilde\beta\pr_1\circ\widetilde\beta \pr_2$
which in turn can be rewritten as
$\beta dl=\beta p_1 l\circ\beta p_2 l$. Finally, because $l$
is faithfully flat hence an epimorphism of spaces, we obtain:
\[
\beta d=\beta p_1\circ\beta p_2.
\]
Then Corollary~\ref{coro:coequalizer} applies and provides
a pair $(g,a)$ and a 2-commutative diagram:
\[
\begin{tikzcd}[column sep=40,row sep=30]
\pi_0(R) \ar[r, "\sigma",""{name=M1,below,pos=0.4}] \ar[d,swap, "\tau",""{name=M2,right,pos=0.5}] & \pi_0(U) \ar[d, "f_0"]
\ar[rdd, bend left=25, "f",""{name=N1,below,pos=0.35},""{name=P1,below,pos=0.75}] & \\
\pi_0(U) \ar[r, "f_0"]\ar[drr, bend right=25, swap, "f",""{name=N4,above,pos=0.53},""{name=P4,above,pos=0.85}] & \sQ
\ar[Rightarrow, from=P1, to=P4, swap, "\delta"{pos=0.2}]
\ar[Leftrightarrow,from=N1,to=N4,"a"'{pos=0.25},
"a"'{pos=0.75},dashed]
\ar[rd,swap,"g"{pos=0.5},dashed,crossing over,""{name=N2,above,pos=0.305},""{name=N3,below,pos=0.265}]
& \\
& & \sM.
\end{tikzcd}
\]
Now we construct $b:gv_0\to v$ using
Corollary~\ref{coro:coequalizer} on the coequalizer property
of $U\to [U/R]$ on morphisms. Define
$c\defeq \delta^{-1}\circ (bh) \circ (g\gamma)$ and consider the
solid diagram:
\[
\begin{tikzcd}[column sep=40,row sep=30]
gv_0\pi s \ar[r,"{g\gamma s}"] \ar[d,"{gv_0\alpha}"']
\ar[rrr,"cs",bend left,dotted,out=60,in=120,distance=1cm] &
gf_0hs \ar[r,"{bhs}"] \ar[d,"{g\alpha_0k}"'] &
fhs \ar[r,"\delta^{-1}s"] \ar[d,"{\widetilde\beta}"'] &
v\pi s \ar[d,"v\alpha"'] \\
gv_0 \pi t \ar[r,"{g\gamma t}"']
\ar[rrr,"ct",bend right,dotted,out=-60,in=-120,distance=1cm]
& gf_0ht \ar[r,"{bht}"'] &
fht \ar[r,"\delta^{-1} t"'] & v\pi t.
\end{tikzcd}
\]
The first square is commutative because
$\alpha_0 k\circ \gamma s=\gamma t\circ v_0\alpha$
by the functoriality of quotient stacks for the morphism
of pregroupoids $(R\toto U)\too (\pi_0(R)\toto\pi_0(U))$.
The second square is commutative by the compatiblity
between $\alpha_0$ and $b$ that results from
Corollary~\ref{coro:coequalizer}. The third square is
commutative by definition of $\widetilde\beta$. Therefore
the outer rectangle is commutative. That is, with the
words of Corollary~\ref{coro:coequalizer}, the arrow
$c$ is a morphism from $(f_1,\beta_1)=(gv_0\pi ,gv_0\alpha)$
to $(f_2,\beta_2)=(v\pi ,v\alpha)$ in the equalizer
category
\[
\eq\,(\sHom(U,\sM) \toto \sHom(R,\sM)).
\]
The quoted corollary gives existence of a 2-isomorphism
$b:gv_0\to v$ such that $c=b\pi$. This concludes the proof
of the lemma.
\end{proof}

\begin{rema}
(1) Lemma~\ref{lemma:stacky-pushout} does not hold if
$\sM$ is an arbitrary Artin stack. In fact,
using Proposition~\ref{prop:Pi_BG} we have
$\Pi_1(B\GG_m/S)=S$ and this implies that the lemma fails
already with $\sX=\sM=B\GG_m$ and $U=S$. For a maybe more
geometric counterexample, let~$k$ be a field and consider
the 2-commutative diagram of $k$-algebraic stacks:
\[
\begin{tikzcd}
U \ar[r] \ar[d] & \mathbb P^1 \ar[d,"\mathcal O(1)"] \\
\pi_0(U) \ar[r] & B\mathbb G_m.
\end{tikzcd}
\]
Here $U=\mathbb A^1\sqcup \mathbb A^1$, $U\to \mathbb P^1$
is the usual affine cover and
$\alpha\colon \mathcal O_U\isomto \mathcal O(1)_{U}$ is some
isomorphism. In this case
$\pi_0(R)=\pi_0(U)\times_{\pi_0(\mathbb P^1)}\pi_0(U)$, and the
two maps towards $\pi_0(U)$ coincide with the projections.
Therefore $[\pi_0(U)/\pi_0(R)]=\pi_0(\mathbb P^1)=\Spec k$.
However, the morphism
$\mathcal O(1)\colon \mathbb P^1\to B\mathbb G_m$ does not
factor via $\Spec k$ since $\mathcal O(1)$ is not trivial.

\smallskip

\noindent (2) A pushout property for $\Pi_1$ (analogous to
Lemma~\ref{prop:pushout_with_pi_0}) can be easily deduced from
the previous lemma, namely, the square
\[
\begin{tikzcd}
U \arrow[r] \arrow[d] \arrow[dr, phantom, "\ulcorner"] & \sX \arrow[d] \\
\Pi_1(U/S) \arrow[r] & \Pi_1(\sX/S)
\end{tikzcd}
\]
satisfies the 2-pushout property for morphisms to Deligne-Mumford
stacks.
\end{rema}

\subsection{The case of algebraic stacks}
\label{ss:coperfection as alg stack}

Finally we prove our main result (Theorem A from the
introduction), building on the case of algebraic spaces
(Theorem~\ref{theo:coperfection_spaces}) and the pushout
along an atlas (Lemma~\ref{lemma:stacky-pushout}).

We will use a lemma about epimorphisms of algebraic stacks.
Since these may fail to be right cancellable as point~(1)
below shows, the claim in~(2) must be estimated at its true value.

\begin{lemma} \label{pullback by epi of spaces}
Let $f:\sS'\to\sS$ be a morphism of algebraic stacks which is
schematically dominant, submersive, and remains so after any
smooth base change.
Let $\sX$ be a stack whose diagonal is representable
by algebraic spaces. Let $u,v:\sS\to\sX$ be morphisms of
stacks.
\begin{trivlist}
\itemn{1} There exist $u,v:\sS\to\sX$ such that $uf=vf$ but
$u$ and $v$ are not isomorphic. Moreover $\sX$ can be chosen
algebraic and $f$ can be chosen
representable, finite, \'etale, surjective.
\itemn{2} Let $a,b:u\isomto v$ be two 2-isomorphisms.
If $f^*a=f^*b$, then $a=b$.
\end{trivlist}
\end{lemma}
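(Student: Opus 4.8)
\medskip

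\textbf{Part (1).} Here we only need to exhibit a counterexample; the point is purely to warn the reader. The plan is to take the simplest finite \'etale surjective morphism and a target whose diagonal sees the difference between two sections. Take $k$ a field, $\sS=\Spec k$, and $\sS'=\Spec(k\times k)\to\Spec k$: this is representable, finite, \'etale, surjective, and schematically dominant (it stays so after smooth base change since it is stable by base change and ``schematically dominant'' for a faithfully flat morphism is automatic). For $\sX$ one can take a classifying stack, e.g. $\sX=B(\ZZ/2)$ over a field of characteristic $\ne 2$, and let $u,v:\Spec k\to B(\ZZ/2)$ classify the trivial and the nontrivial $\ZZ/2$-torsor respectively. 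After base change to $\Spec(k\times k)$ both torsors become trivial (a $\ZZ/2$-torsor over $\Spec k$ splits over $\Spec(k\times k)$ precisely when it is either trivial or the torsor $\Spec(k\times k)$ itself, and in the latter case it becomes the trivial torsor on each factor), so $uf\simeq vf$, while $u\not\simeq v$ because the torsors are non-isomorphic. One checks $B(\ZZ/2)$ is algebraic. This is the whole of (1); I would spell out only the torsor-splitting computation.

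\medskip

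\textbf{Part (2).} Now $u,v:\sS\to\sX$ are fixed, $a,b:u\isomto v$ are $2$-isomorphisms with $f^*a=f^*b$, and we want $a=b$. The key observation is that $a$ and $b$ are both sections of the sheaf $\underline{\Isom}_{\sS}(u,v)\to\sS$, which is representable by an algebraic space because the diagonal of $\sX$ is representable by algebraic spaces. So set $I\defeq\underline{\Isom}_{\sS}(u,v)$; this is an $\sS$-algebraic space, and $a,b\in I(\sS)$. Consider the equalizer $\sZ\into\sS$ of the two $\sS$-morphisms $a,b:\sS\to I$, i.e. the fibre product $\sS\times_{(a,b),I\times_\sS I,\Delta}I$. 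The diagonal $\Delta_{I/\sS}$ is a monomorphism (it is even an immersion for algebraic spaces, but a monomorphism suffices), hence $\sZ\to\sS$ is a monomorphism of algebraic stacks, and it is a monomorphism through which $f:\sS'\to\sS$ factors, because $f^*a=f^*b$ means exactly that $\sS'\to\sS$ lands in $\sZ$.

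\medskip

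It remains to show that a monomorphism $\sZ\into\sS$ through which a schematically dominant, universally submersive $f$ factors is an isomorphism; then $a=b$ follows because $a$ and $b$ agree on all of $\sS=\sZ$. For this I would argue: a monomorphism of algebraic stacks is in particular representable and separated; pulling back along a smooth atlas $S_0\to\sS$ we may reduce to the case of algebraic spaces $Z_0\into S_0$, a monomorphism, through which the (still schematically dominant, still universally submersive, by hypothesis on $f$ and since these properties are smooth-local on the base and stable by base change) morphism $S_0'=\sS'\times_\sS S_0\to S_0$ factors. A monomorphism of algebraic spaces that is submersive-surjective is both set-theoretically bijective and a monomorphism, hence a universal homeomorphism and a monomorphism; combined with being schematically dominant (the composite $S_0'\to Z_0\to S_0$ is schematically dominant, which forces $Z_0\to S_0$ to be schematically dominant as the second map is a monomorphism hence the scheme-theoretic image of $S_0'\to S_0$ is contained in $Z_0$), one concludes $\cO_{S_0}\to\cO_{Z_0}$ is injective; a monomorphism is moreover an epimorphism in the category of schemes locally so $\cO_{S_0}\to\cO_{Z_0}$ is also surjective, hence $Z_0\isomto S_0$. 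Descending back, $\sZ\isomto\sS$.

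\medskip

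The main obstacle is the last step: getting from ``$f$ factors through a monomorphism $\sZ$'' to ``$\sZ=\sS$''. One has to be slightly careful that ``schematically dominant'' is the right input (a merely surjective submersion through a monomorphism need not be an isomorphism in the non-reduced direction — the monomorphism could be a nilpotent thickening's complement, so one really does need schematic dominance to kill the possibility that $\sZ$ is a proper closed substack, and submersivity plus bijectivity to rule out $\sZ$ being, say, a dense open). The hypothesis that $f$ remains schematically dominant and submersive after smooth base change is exactly what lets us reduce to algebraic spaces and then to the affine commutative-algebra statement: $R\to R'$ injective (schematic dominance) plus the corresponding morphism being a submersion forces any intermediate monomorphism to be an isomorphism. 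I would package the reduction to algebraic spaces as a short lemma and then cite topological invariance / standard facts on monomorphisms of schemes for the affine conclusion.
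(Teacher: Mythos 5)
Your part (1) counterexample does not work as stated. You take $\sS'=\Spec(k\times k)\to\Spec k=\sS$ and $u,v:\Spec k\to B(\ZZ/2)$ classifying the trivial and a nontrivial torsor, and you assert that both become trivial after pullback to $\Spec(k\times k)$. They do not: a nontrivial torsor $\Spec L$ (with $L/k$ separable quadratic) pulls back along $\Spec k\sqcup\Spec k\to\Spec k$ to $\Spec L\sqcup\Spec L$, which is still nontrivial on each factor; torsors over a field split over the field extension $L$, not over $k\times k$. The fix is to take $\sS'=\Spec L$ (the torsor itself) as the covering, or to follow the paper's choice: $f:S\to BG$ the atlas of a classifying stack with $u=\id_{BG}$ and $v$ the composite $BG\to S\to BG$, which satisfy $uf=vf$ on the nose and are non-isomorphic as soon as $H^1(S,G)\neq 0$.

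For part (2) your route is genuinely different from the paper's and is structurally sound: you form the equalizer $\sZ\into\sS$ of the two sections $a,b$ of $I=\underline\Isom(u,v)$, observe $f$ factors through the monomorphism $\sZ$, and reduce along a smooth atlas to showing that a monomorphism of algebraic spaces through which a schematically dominant universal submersion factors is an isomorphism. (The paper instead tests $a_T=b_T$ on each $T$ in the lisse-\'etale site of $\sS$ and quotes \cite{RRZ18}, Lemma~2.1.5, which says precisely that schematically dominant universally submersive morphisms of algebraic spaces are epimorphisms; you are in effect re-proving that lemma.) The one real gap is your last step: from ``$Z_0\to S_0$ is a monomorphism'' you conclude that $\cO_{S_0}\to\cO_{Z_0}$ is surjective because a monomorphism of schemes is a ring epimorphism locally. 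Ring epimorphisms are not surjective in general ($\ZZ\to\QQ$ is the standard example), so this inference is invalid. What saves the argument is that you have already shown $Z_0\to S_0$ to be a universal homeomorphism, hence universally closed; a universally closed monomorphism of schemes (or algebraic spaces) is a closed immersion, and a schematically dominant closed immersion is an isomorphism. Replace the ``epimorphism of rings'' sentence by that chain (or simply cite \cite{RRZ18}, Lemma~2.1.5 as the paper does) and your proof of (2) is complete.
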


\begin{proof}
(1) Let $f:S\to BG$ be the canonical atlas of the classifying
stack of a finite \'etale group scheme~$G$ over a
scheme $S$. Let $a:BG\to S$ be the structure morphism. Let
$u=\id_{BG}:BG\to BG$ and $v=fa:BG\to BG$. Then we have
$af=\id_S$ hence $vf=uf$, but $u$ anv $v$ are not isomorphic
provided~$G$ is chosen so that $H^1(S,G)$ is nontrivial.

\smallskip

\noindent (2) Replacing $\sS'$ by a smooth atlas $S'\to\sS'$,
we can assume that $\sS'=S'$ is a scheme. Consider the
$\sS$-stack of 2-isomorphisms
$I_{\sS}\defeq \underline\Isom(u,v)$.
Then $I_{\sS}$ defines a sheaf over the lisse-\'etale site
of~$\sS$, and we have $a=b$ if and only $a_T=b_T$ for all
objects $T\to\sS$ of that site. Fix such a $T$, and let
$I_T=I_{\sS}\times_{\sS}T$.
Let $T'\defeq T\times_{\sS}S'$. Because $T'$ dominates $S'$, the
assumption $f^*a=f^*b$ implies $a_{T'}=b_{T'}$, that is,
we have two equal compositions:
\[
\begin{tikzcd}[column sep=30]
T' \ar[r] & [-1em] T \ar[r,shift left,"a_T"] \ar[r,shift right,"b_T"'] &
I_T.
\end{tikzcd}
\]
But the assumption on $f$ implies that $T'\to T$ is an
epimorphism of algebraic spaces, see \cite{RRZ18},
Lemma~2.1.5. Hence $a_T=b_T$ as was to be shown.
\end{proof}

Now let $S$ be an algebraic space of characteristic $p$
and $\sX$ a flat, finitely presented, separable $S$-algebraic stack.
As a consequence of Lemma~\ref{lemma:perfect_stack}, the \'etale
fundamental pro-groupoid $\Pi_1(\sX/S)$ is relatively perfect
over~$S$. Therefore the natural morphism
$\Fdiv(\Pi_1(\sX/S)) \too \Pi_1(\sX/S)$
is an isomorphism, and we obtain a natural morphism:
\[
\rho\colon \sX \too \Pi_1(\sX/S) \isomto \Fdiv(\Pi_1(\sX/S)).
\]

\begin{theo} \label{theo:coperfection_stacks}
Let $S$ be a noetherian algebraic space of characteristic~$p$.
Let $\sX\to S$ be a flat, finitely presented, separable algebraic
stack. Let $\sM\to S$ be a quasi-separated Deligne-Mumford stack. Then the functor $\alpha \mapsto \Fdiv(\alpha)\circ \rho$ is
an isomorphism
\[
\sHom(\Pi_1(\sX/S),\sM) \isomto \sHom(\sX,\Fdiv(\sM))
\]
between the stacks of morphisms of pro-Deligne-Mumford stacks
(with $\sM$ seen as a constant 2-pro-object) on the source,
and morphisms of stacks on the target. This isomorphism is
functorial in $\sX$ and $\sM$.
\end{theo}

\begin{rema} \label{rem:on coperfection 2}
In terms of coperfection, this says that if $\mathscr{X}/S$ is
a flat finitely presented separable algebraic stack, the
inductive system of relative Frobenii
\[
\begin{tikzcd}
\mathscr{X} \arrow[r, "\Frob_{\mathscr{X}/S}"] & \mathscr{X}^{p/S} \arrow[r, "\Frob_{\mathscr{X}^p/S}"] & \mathscr{X}^{p^2/S} \arrow[r] & \dots
\end{tikzcd}
\]
admits a colimit in the 2-category of pro-quasi-separated
Deligne-Mumford stacks over $S$, which is the pro-\'etale
stack $\Pi_1(\mathscr{X}/S)$. In particular, $\Pi_1(\mathscr{X}/S)$ is a coperfection of $\mathscr{X}/S$ in the 2-category of
quasi-separated Deligne-Mumford stacks.
\end{rema}

\begin{proof}
As in the proof of Theorem~\ref{theo:coperfection_spaces},
we write $\Pi_1(\sX)\defeq\Pi_1(\sX/S)$, we let
$\rho_0:\sX\to\Pi_1(\sX)$ be the natural map and we want to
prove that
\[
\Phi=\rho_0^*\colon
\sHom(\Pi_1(\sX),\Fdiv(\sM)) \isomto \sHom(\sX,\Fdiv(\sM))
\]
is a bifunctorial isomorphism of stacks over $S$. We assume
that $S$ is affine and we reduce to proving that the map on
global sections over $S$ is an equivalence of categories.

We start with essential surjectivity. Consider an object $f$ of
$\sHom(\sX,\Fdiv(\sM))$. Just like we did in the proof of
Theorem~\ref{theo:coperfection_spaces}, we can fix $u:\sX\to\sM$,
assume $\sM$ quasi-compact, pick an atlas $V\to \sM$ with
$V$ an affine scheme, define $\sX_V\defeq\sX\times_{\sM}V$ and
choose a smooth atlas of finite presentation
$U\to \sX\times_{\sM}V$ such that $\sX$ and $U$ are flat,
finitely presented, separable over $S$. Moreover~$f$ induces
an object $f'\in\Hom(\sX_V,\Fdiv(V))$, and by
precomposition an element $g\in\Hom(U,\Fdiv(V))$.

By Theorem~\ref{theo:coperfection_spaces}, the map $g$ is
induced by a unique morphism $\pi_0(U/S)\to V$; or,
equivalently, a morphism $\pi_0(U/S)\to \Fdiv(V)$. Using
the pushout diagram of Lemma~\ref{lemma:stacky-pushout},
case ii),
\[
\begin{tikzcd}
U \arrow[r] \arrow[dd] \arrow[ddr, phantom, "\ulcorner",pos=0.3]
& \sX \arrow[d] \arrow[rddd, bend left] & \\
& \Pi_1(\sX/S) \arrow[rdd, dashed, bend left=15] \arrow[d] & \\
\pi_0(U) \arrow[r] \arrow[rd] & \lbrack\pi_0(U)/\pi_0(R)\rbrack \arrow[rd, dashed] & \\
& \Fdiv(V) \arrow[r] & \Fdiv(\sM)
\end{tikzcd}
\]
we obtain a map $\Pi_1(\sX/S)\to\Fdiv(\sM)$ and this shows
essential surjectivity.

We pass now to full faithfulness of $\Phi$. Let $f,g$ be objects
of $\sHom(\Pi_1(\sX/S),\Fdiv(\sM))$. We want to prove that the
map $\Hom(f,g) \too \Hom(\Phi(f),\Phi(g))$ is bijective.

We start with surjectivity. Assume given a diagram
\[
\begin{tikzcd}
\sX \ar[r,"\rho_0"] &\Pi_1(\sX/S) \ar[r, shift left=1,"f"]\ar[r,shift right=1,swap,"g"] & \Fdiv(\sM)
\end{tikzcd}
\]
and an isomorphism $\alpha\colon f\rho_0\isomto g\rho_0$.
By the definition of morphisms in the pro-category and
cofilteredness of ${\sf E}^{\rm surj}(\sX/S)$, the
morphisms~$f,g$ as well as $\alpha$ are defined on some
common \'etale finitely presentated stack $\sE$ corresponding
to a surjective factorization $h:\sX\to\sE$. Abusing
notation slightly, we therefore assume that we have
$f,g:\sE\to\Fdiv(\sM)$ and $\alpha\colon fh\isomto gh$.
Our aim is to show that there exists
a refinement $\sX\xrightarrow{h'} \sE' \xrightarrow{l} \sE$ in
${\sf E}^{\rm surj}(\sX/S)$ and a 2-isomorphism
$\beta\colon fl\isomto gl$ such that $\beta h'=\alpha$.
Since $\sE\to S$ is \'etale hence perfect, we have
$\sHom(\sE,\Fdiv(\sM))=\sHom(\sE,\sM)$ canonically, and
similarly for~$\sE'$. We deduce that it is enough to work with
the compositions $f_0,g_0:\sE\to\Fdiv(\sM)\to\sM$.
Indeed, if we find $(h'_0:\sX\to\sE',\beta_0\colon fl\isomto gl)$
for $(f_0,g_0)$, then applying $\Fdiv$ will provide $(h',\beta)$
suitable for $(f,g)$. In sum, changing again notation,
we can start from:
\[
\begin{tikzcd}
\sX \ar[r,"h"] &\sE \ar[r, shift left=1,"f"]\ar[r,shift right=1,swap,"g"] & \sM.
\end{tikzcd}
\]
Letting $\sI\defeq \underline\Isom_{\sE}(f,g)$, we consider
the 2-commutative diagram
\[
\begin{tikzcd}
& \sI \ar[r] \ar[d] & \sM \ar[d, "\Delta"]\\
\sX \ar[ur, "\alpha"] \ar[r] & \sE \ar[r,"{(f,g)}"] &
\sM\times_S\sM.
\end{tikzcd}
\]
The assumption that $\sM\to S$ is Deligne-Mumford guarantees
that the representable morphism $\sI\to \sE$ is unramified.
Now let us pick an \'etale finitely presented atlas $E\to \sE$
with $E\to S$ an \'etale quasi-compact scheme; let us also
choose an fppf separable atlas $V \to \sX\times_{\sE}E$ by
a scheme $V$. Let $I=E \times_{\sE}\sI$. We obtain a
2-commutative diagram
\[
\begin{tikzcd}[row sep=20]
V \ar[d] & & I\ar[d]\ar[ld] \\
\sX \times_{\sE}E \ar[rru, "\alpha'"] \ar[r]\ar[d]
& E\ar[d] & \sI \ar[ld]\\
\sX \ar[rru, "\alpha", crossing over,pos=0.25] \ar[r] & \sE
\end{tikzcd}
\]
with $\alpha'$ induced by $\alpha$ via pullback along
$E\to \sE$. The morphism $I\to E$ is representable and
unramified; therefore $I$ is an unramified algebraic space
over $S$. By Lemma~\ref{pi0 initial to unramified} the map
$V\to I$ factors uniquely via $\pi_0(V/S)$. Letting
$R=V\times_{\sX}V$, we obtain by
Lemma~\ref{lemma:stacky-pushout} a dashed arrow
\[
\begin{tikzcd}
V \ar[r]\ar[d] & \sX \ar[d]\ar[rdd, bend left] \\
\pi_0(V/S) \ar[r]\ar[rrd, bend right=20] & \left[\pi_0(V)/\pi_0(R)\right] \ar[rd,dashed]& \\
& & \sI
\end{tikzcd}
\]
making the diagram 2-commute. Then
$\sX\to \sE'\defeq [\pi_0(V)/\pi_0(R)]$ is the required $h'$,
and the dashed arrow $\sE'\to \sI$ is $\beta$.

We finish with injectivity. Let $a,b:f\to g$ be two morphisms
such that $\rho_0^*a=\rho_0^*b$. Then as before, there is a
factorization $h:\sX\to\sE$ such that $f,g$ and $a,b$ are defined
on $\sE$. Let $f_i,g_i:\sE\to \Frob^i_*\sM$ and $a_i,b_i:f_i\to g_i$
be the $i$-th components of $f,g$ and $a,b$. It is enough to prove
that $a_i=b_i$ for each $i$. Let $f'_i,g'_i:\Frob^{i*}\sX\to \sM$
and $a'_i,b'_i:f'_i\to g'_i$ be the maps deduced by the
$(\Frob^*,\Frob_*)$ adjunction; it is then enough to prove that
$a'_i=b'_i$. Let $k_i=\Frob^{i*}h:\Frob^{i*}\sX\to \Frob^{i*}\sE$,
we have $k_i^*a'_i=k_i^*b'_i$ by assumption. Since~$k_i$ is
faithfully flat and of finite presentation, it satisfies the
assumptions of Lemma~\ref{pullback by epi of spaces} and
we deduce that $a'_i=b'_i$. This concludes the proof.
\end{proof}

\begin{rema} \label{rmk:qs_importance}
(1) Here the assumption of quasi-separation is crucial. If one
considers possibly non-quasi-separated Deligne-Mumford stacks in the
statement of Theorem~\ref{theo:coperfection_stacks}, one may not
be able to find a finitely presented \'etale atlas $V\to\sM$,
and hence the atlas $U\to \sX$ in the theorem may not be chosen
with $U/S$ being finitely presented, consequently one cannot apply
$\pi_0$ to~$U/S$. Here is a counterexample: let $\sX=C$ be a nodal
irreducible curve over a field $k$. It is known that there exists
an infinite \'etale cover of $C$ which does not come from finite
\'etale covers, corresponding to a morphism $C\to B\mathbb{Z}$
to the perfect stack $B\mathbb{Z}$ which is not quasi-separated.
However, by Proposition~\ref{prop:fundamental gerbe} the stack
$\Pi_1(C/k)\simeq\Pi^{\et}_{C/k}$ is profinite, while the morphism
$C\to B\mathbb{Z}$ does not factor through any finite \'etale
stack, hence it does not factor through $\Pi_1(C/k)$.

\smallskip

\noindent (2) If one wants to make the statement above an actual
adjunction, some rather costly strengthenings of the assumptions
are needed. First,
one needs to extend the functors to the 2-pro-categories; this is
no big problem. Second and more seriously, we need $\Fdiv$ to take
values in (the 2-pro-category of) flat, separable algebraic stacks.
This is much more binding; the natural way to ensure this is to
assume that the Frobenius of $S$ is {\em finite locally free}
(e.g. $S$ regular $\Frob$-finite) and~$\sM$ is {\em smooth}.
To sum up, let ${\rm\bf SpbStack}_S$ be the 2-category of
faithfully flat, finitely presented, separable algebraic stacks
and ${\rm\bf SmDM}_S$ the 2-category of smooth Deligne-Mumford
stacks. If $\Frob_S$ is finite locally free, we obtain a pair
of 2-adjoint functors:
    \[
    \begin{tikzcd}[column sep=15mm]
    2{\text -}\Pro({\rm\bf SpbStack}_S) \ar[r,shift left=1.9,"\Pi_1"] \ar[r,phantom,"\mbox{\tiny$\perp$}"] & 2{\text -}\Pro({\rm\bf SmDM}_S). \ar[l,shift left=1.9,"\Fdiv"]
    \end{tikzcd}
    \]
\end{rema}

To give a concrete illustration, we take as example the moduli
stack $\sM=\bar\sM_{g,n}$ of stable curves of genus $g$
with $n$ marked points, with $2g-2+n>0$.

\begin{prop} \label{prop:F_divided_objects}
Let $k$ be a field and let $X/k$ be a geometrically connected,
geometrically reduced scheme of finite type admitting
a $k$-rational point $x\in X(k)$. Set $\sM=\bar\sM_{g,n}$. Let
\[
(\sC_i\to X^{p^i/k},\sigma_i)\in \Fdiv(\sM)(X)
\]
be a divided curve over $X$. Let $C\in\Fdiv(\sM)(k)$ be its pullback
via $x\colon \Spec k\to X$; note that $\Fdiv(\sM)(k)=\sM(k)$
by taking $\sX=\Spec k$ in Theorem~\ref{rem:on coperfection 2}.
Then there exist:
\begin{itemize}
\item a finite \'etale subgroup scheme $G\subset \Aut_k(C)$;
\item a $G$-torsor $f\colon P\to X$;
\end{itemize}
such that the $\Frob$-divided curve on $P$ obtained from pullback of $(\sC_i,\sigma_i)$ via $f\colon P\to X$ is isomorphic to the pullback of $C$ via $P\to \Spec k$.
\end{prop}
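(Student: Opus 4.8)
The plan is to feed the divided curve into Theorem~\ref{theo:coperfection_stacks}. Here $\sM=\bar\sM_{g,n}$ is a smooth, proper --- hence finitely presented and quasi-separated --- Deligne-Mumford $k$-stack, and $\sX=X$ is flat, finitely presented and separable over $k$; the theorem therefore turns $(\sC_i,\sigma_i)\in\Fdiv(\bar\sM_{g,n})(X)$ into a morphism $\phi\colon\Pi_1(X/k)\to\bar\sM_{g,n}$, unique up to $2$-isomorphism, with $(\sC_i,\sigma_i)=\Fdiv(\phi)\circ\rho$. Since $X$ is geometrically connected with a $k$-point, $\pi_0(X/k)=\Spec k$ and, by Proposition~\ref{prop:fundamental gerbe}, $\Pi_1(X/k)$ is a profinite gerbe over $\Spec k$ neutralized by $x$. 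By the definition of morphisms in the $2$-pro-category, $\bar\sM_{g,n}$ being a constant $2$-pro-object, $\phi$ factors up to $2$-isomorphism as $\Pi_1(X/k)\xrightarrow{p}\sE\xrightarrow{\bar\phi}\bar\sM_{g,n}$ for some object $h\colon X\to\sE=[\pi_0(U)/\pi_0(R)]$ of ${\sf E}^{\rm cov}(X/k)$, with $p\circ\rho_0=h$. This $\sE$ is a finitely presented étale gerbe over $\pi_0(X/k)=\Spec k$, hence a finite étale gerbe, and the section $e_X\colon\Spec k\xrightarrow{x}X\xrightarrow{h}\sE$ neutralizes it; thus $\sE\simeq BG_\lambda$ for a finite étale $k$-group scheme $G_\lambda$, with $e_X$ the trivial-torsor section. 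By construction $\bar\phi$ sends the trivial torsor to $C$.

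Next I would extract the subgroup of $\Aut_k(C)$. A morphism $BG_\lambda\to\bar\sM_{g,n}$ sending the trivial torsor to $C$ is the same as a homomorphism $\psi\colon G_\lambda\to\Aut_k(C)$ of $k$-group schemes; because $\bar\sM_{g,n}$ is Deligne-Mumford, $\Aut_k(C)$ is finite and étale over $k$. Let $G\subseteq\Aut_k(C)$ be the image of $\psi$, a finite étale subgroup scheme, and write $\psi=\iota\circ\psi'$ with $\psi'\colon G_\lambda\to G$ faithfully flat and $\iota\colon G\hookrightarrow\Aut_k(C)$. Then $\bar\phi=q\circ B\psi'$, where $q\colon BG\to\bar\sM_{g,n}$ is the composite $BG\xrightarrow{B\iota}B\Aut_k(C)\to\bar\sM_{g,n}$, the last arrow classifying $C$ with all its automorphisms; in particular $q$ sends the origin $e\colon\Spec k\to BG$ to $C$. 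Now set $\chi\colon X\xrightarrow{h}BG_\lambda\xrightarrow{B\psi'}BG$, and let $f\colon P\to X$ be the $G$-torsor classified by $\chi$, so that $P\simeq X\times_{BG}\Spec k$ (fibre product of $\chi$ and $e$) and $\chi\circ f$ is canonically $2$-isomorphic to $e$ composed with the structure morphism $P\to\Spec k$. These $G$ and $f$ are the objects in the statement.

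It remains to check that $f$ trivializes the divided curve. Since $BG_\lambda$, $BG$ and $B\Aut_k(C)$ are étale, hence perfect, $\Fdiv$ is the identity on them and on the morphisms between them; in particular $q$ determines a point $\tilde q\in\Fdiv(\bar\sM_{g,n})(BG)$. Unwinding $(\sC_i,\sigma_i)=\Fdiv(\phi)\circ\rho$ through the factorizations $\phi=\bar\phi\circ p$ and $\bar\phi=q\circ B\psi'$, and using that $\Fdiv(p)\circ\rho$ is identified with $p\circ\rho_0=h$, one obtains $(\sC_i,\sigma_i)=\chi^*\tilde q$ in $\Fdiv(\bar\sM_{g,n})(X)$. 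Pulling back along $f$ and using $\chi\circ f\simeq e\circ(P\to\Spec k)$ gives $f^*(\sC_i,\sigma_i)=(P\to\Spec k)^*e^*\tilde q$; and $e^*\tilde q\in\Fdiv(\bar\sM_{g,n})(k)=\bar\sM_{g,n}(k)$ is the image of $q(e)=C$. Hence $f^*(\sC_i,\sigma_i)$ is the pullback of $C$ along $P\to\Spec k$, as required. I expect the middle paragraph to be the main obstacle: one must carefully neutralize the finite étale gerbe $\sE$ via the rational point, identify $\bar\phi$ with the homomorphism $\psi$, and check that $\Aut_k(C)$ --- hence $G$ --- is finite étale, which is exactly where the Deligne-Mumford hypothesis enters, as in Theorem~\ref{theo:coperfection_stacks}.
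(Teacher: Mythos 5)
Your proof is correct and follows essentially the same route as the paper's: apply Theorem~\ref{theo:coperfection_stacks} to factor the divided curve through a finite \'etale gerbe in the index category of $\Pi_1(X/k)$, use geometric connectedness and the rational point to identify that gerbe with $BG_\lambda$ for a finite \'etale group scheme, replace $G_\lambda$ by its image in $\Aut_k(C)$, and pull back along the associated torsor. The only (harmless) differences are that you work directly with ${\sf E}^{\rm cov}(X/k)$ where the paper takes a general object of ${\sf E}^{\rm surj}(X/k)$ and then shows its coarse space is $\Spec k$, and that you spell out the image factorization $\psi=\iota\circ\psi'$ and the final pullback computation slightly more explicitly.
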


\begin{proof}
By Theorem~\ref{theo:coperfection_stacks}, the $\Frob$-divided curve
$(\sC_i,\sigma_i)$ corresponds to an object of
\[
\sHom(\Pi_1(X),\sM)=\underset{X\onto\sE}\colim\,\sHom(\sE,\sM)
\]
and therefore to a $g\in\sHom(\sE,\sM)$ for some factorization
$X\onto \sE\to \Spec k$ in ${\sf E}^{\surj}(X/k)$.

Let $\sE\to E$ be the coarse moduli space. Then $E/k$ is an \'etale algebraic space, and $X\to \sE\to E$ is surjective; we have therefore a factorization $X\to \pi_0(X/k)\onto E$. As $X/k$ is geometrically connected, $\pi_0(X/k)=\Spec k$, and so $E=\Spec k$ as well.

The gerbe $\sE\to E=\Spec k$ has a section induced by $x\in X(k)$:
hence $\sE$ is equivalent to $BG$ for some finite \'etale $k$-group
scheme $G$. The morphism $BG \to \sM$ induced by $g$ is the datum
of a curve $C/k$ in $\sM(k)$ and a left $G$-action on $C$. We may
therefore replace $G$ by its image in $\Aut(C)$.

Let now $P\to X$ be the $G$-torsor associated to $X\to BG$. The $2$-commutative diagram
\[
\begin{tikzcd}
P \ar[r]\ar[d] & \Spec k \ar[d] \\
X \ar[r] & BG
\end{tikzcd}
\]
induces a $2$-commutative diagram
\[
\begin{tikzcd}
\Fdiv(\sM)(P)  & \Fdiv(\sM)(k)= \sM(k) \ar[l] \\
\Fdiv(\sM)(X) \ar[u] & \Fdiv(\sM)(BG) = \sM(BG) \ar[l] \ar[u,shift left=7]
\end{tikzcd}
\]
where the equivalences on the right are due to
Theorem~\ref{theo:coperfection_stacks} and Proposition \ref{prop:Pi_BG}.

As we said, the $\Frob$-divided curve $(\sC_i,\sigma_i)$ is in the essential image of the lower horizontal arrow, and its image in $\Fdiv(\sM)(P)$ is therefore isomorphic to the pullback of a curve $C\in M(k)$.
\end{proof}

\subsection{Appendix: the groupoid closure of a pregroupoid} \label{sec:groupoid_closure}

In this appendix, we give the construction of the groupoid closure of a pregroupoid.

\begin{noth}{Groupoids}
A {\em groupoid} is a small category where every morphism is an
isomorphism. Alternatively, it is given by a set of objects $U$,
a set of arrows $R$, and morphisms source and target
$s,t:R\to U$, composition $c:R\times_{s,U,t} R\to R$, identity
$e:U\to R$, inverse $i:R\to R$, satisfying the following axioms:
\begin{itemize}
\item[(1)] Associativity: $c\circ (1,c)=c\circ (c,1)$,
\item[(2)] Identity: $s\circ e=t\circ e=1$ and
$c\circ (1,e\circ s)=c\circ (e\circ t,1)=1$,
\item[(3)] Inverse: $s\circ i=t$, $t\circ i=s$,
$c\circ (i,1)=e\circ s$ and $c\circ (1,i)=e\circ t$.
\end{itemize}
In a groupoid, the maps $e$ and $i$ are uniquely determined,
$i$ is an involution, and $i\circ e=e$. In particular the quintuple
$(U,R,s,t,c)$ suffices to describe the groupoid.
\end{noth}


\begin{noth}{Symmetry}
Inversion $i$ extends to $n$-tuples of composable arrows:
\[
(R/U)^n\defeq R\times_{s,U,t} R\times_{s,U,t}\dots\times_{s,U,t}R
\quad,\quad
(\alpha_1,\dots,\alpha_n)\mapsto (\alpha_n^{-1},\dots,\alpha_1^{-1}).
\]
This can be used to shrink the number of axioms. Indeed, we have:
\[
c=i\circ c\circ i,
\quad (1,c)=i\circ (c,1)\circ i,
\quad (e\circ t,1)=i\circ (1,e\circ s)\circ i,
\quad (1,i)=i\circ (i,1)\circ i.
\]
Using this, the axioms $t\circ e=1$, $c\circ (e\circ t,1)=1$
and $c\circ (1,i)=e\circ t$ follow from $s\circ e=1$,
$c\circ (1,e\circ s)=1$ and $c\circ (i,1)=e\circ s$,
respectively. Of course the reduced system of axioms has the
drawback that it is not symmetric. In the sequel, for legibility
we will prefer
to give full, symmetric lists of axioms, but we will use symmetry
to reduce the number of constructions. Namely, when we want to
construct a (pre)groupoid and maps $\lambda=(1,e\circ s)$ and
$\lambda^{\mbox{\tiny +}}=(e\circ t,1)$ have to be provided, then
we know that it is enough to construct $\lambda$ since then
$\lambda^{\mbox{\tiny +}}=i\circ \lambda\circ i$.
\end{noth}

\begin{noth}{Pregroupoids: motivation} \label{motivation for pgpds}
Put in a nutshell, a pregroupoid is a structure which resembles
that of a groupoid, but where composition is only partially defined
and associativity holds only partially. More on the technical side,
our working definition will be that a pregroupoid is what you obtain
when you apply a functor to a groupoid. Since this produces a lot
of data, we will describe the motivating example first
in order to make the ensuing Definition~\ref{defi:pregpd}
readable. We simplify notations
by allowing the omission of the ``$\circ$'' sign for compositions.
\end{noth}

\begin{exam} \label{exam:motivating}
Assume that $(U_0,R_0,s_0,t_0,c_0)$ is a groupoid in
objects of a category $\sC_0$. Let $F:\sC_0\to\sC$ be a functor.
If $F$ transforms fibred products into fibred products, then
\[
(F(U_0),F(R_0),F(s_0),F(t_0),F(c_0))
\]
is a groupoid in objects of $\sC$. In general however, all data
and axioms involving fibred products are altered. We will now
describe the result precisely.

\medskip

\noindent
We first look at the data that do not involve fibred products,
that is $U_0,R_0,s_0,t_0,e_0,i_0$. By taking their images under $F$,
we obtain:
\begin{trivlist}
\itemm{1} objects $U,R$ and maps $s,t:R\to U$, $e:U\to R$, $i:R\to R$
such that $se=te=1$, $i^2=1$, $si=t$, $ti=s$.
\end{trivlist}
Now we look at the data involving double fibred products and
composition.
By taking the images of $D_0\defeq (R_0/U_0)^2$, the involution
$i_0:D_0\to D_0$, the projections $\pr_1,\pr_2:D_0\to R_0$, we obtain:
\begin{trivlist}
\itemm{2} an object $D$ and maps $i:D\to D$, $p_1:D\to R$,
$p_2:D\to R$ such that $sp_1=tp_2$ and $p_1i=ip_2$.
\end{trivlist}
By taking the images of the composition $c_0:D_0\to R_0$
and the maps $\lambda_0\defeq (1,e_0s_0):R_0\to D_0$,
$\lambda^{\mbox{\tiny +}}_0\defeq (e_0t_0,1):R_0\to D_0$,
$\mu_0\defeq (i_0,1):R_0\to D_0$,
$\mu^{\mbox{\tiny +}}_0\defeq (1,i_0):R_0\to D_0$ we obtain:
\begin{trivlist}
\itemm{3} maps $c:D\to R$,
$\lambda,\lambda^{\mbox{\tiny +}}:R\to D$,
$\mu,\mu^{\mbox{\tiny +}}:R\to D$ such that
\begin{trivlist}
\itemm{3.a} $p_1\lambda=1$, $p_2\lambda=es$,
$p_1\lambda^{\mbox{\tiny +}}=et$, $p_2\lambda^{\mbox{\tiny +}}=1$,
$\lambda i=i\lambda^{\mbox{\tiny +}}$,
$c\lambda=c\lambda^{\mbox{\tiny +}}=1$,
\itemm{3.b} $p_1\mu=i$, $p_2\mu=1$, $p_1\mu^{\mbox{\tiny +}}=1$,
$p_2\mu^{\mbox{\tiny +}}=i$, $\mu i=\mu^{\mbox{\tiny +}}$,
$c\mu=es$, $c\mu^{\mbox{\tiny +}}=et$.
\end{trivlist}
\end{trivlist}
Finally we look at the data involving triple fibred products and
associativity. By taking the images of $E_0\defeq (R_0/U_0)^3$, the
involution $i_0:E_0\to E_0$, and the projections
$\pr_{12},\pr_{23}:E_0\to D_0$, we obtain:
\begin{trivlist}
\itemm{4} an object $E$ and maps $i:E\to E$, $q_{12},q_{23}:E\to D$
such that $p_2q_{12}=p_1q_{23}$ and $q_{12}i=iq_{23}$.
\end{trivlist}
We define $q_1\defeq p_1q_{12}$, $q_2\defeq p_1q_{23}$, $q_3\defeq p_2q_{23}$.
By taking the images of $\nu_0\defeq (1,c_0):E_0\to D_0$
and $\nu^{\mbox{\tiny +}}_0\defeq (c_0,1):E_0\to D_0$ we obtain:
\begin{trivlist}
\itemm{5} maps $\nu,\nu^{\mbox{\tiny +}}:E\to D$ such that
$p_1\nu=q_1$, $p_2\nu=cq_{23}$, $p_1\nu^{\mbox{\tiny +}}=cq_{12}$,
$p_2\nu^{\mbox{\tiny +}}=q_3$, $\nu i=i\nu^{\mbox{\tiny +}}$
and $c\nu=c\nu^{\mbox{\tiny +}}$.
\end{trivlist}
The axioms of a groupoid survive in modified guise: associativity
is in (5); identity is in (1) and (3.a); inverse is in (1) and (3.b).
Using
symmetry, this set of data is determined by the subcollection
$P\defeq (U,(R,i),(D,i),(E,i),s,c,e,p_1,\lambda,\mu,q_{12},\nu)$.
\end{exam}

\begin{defi} \label{defi:pregpd}
A {\em pregroupoid} (over $U$) is given by a collection of
objects and maps
\[
P=\big(U,(R,i),(D,i),(E,i),s,c,e,p_1,\lambda,\mu,q_{12},\nu\big)
\]
satisfying the conditions (1) to (5) in~\ref{exam:motivating}.
A {\em morphism of pregroupoids} $f:P\to P'$ is given by a
quadruple of maps $U\to U'$, $R\to R'$, $D\to D'$,
$E\to E'$ that are compatible with all the structure maps of
the pregroupoids $P$ and $P'$.
\end{defi}

\begin{remas} \label{rema:gpds and pgpds}
(1) Each groupoid $(U,R,s,t,c)$ defines a unique pregroupoid such
that $D=(R/U)^2$ and $E=(R/U)^3$. This gives rise to a faithful
embedding of categories:
\[
\iota:(\Groupoid/U) \intoo (\Pregroupoid/U).
\]
\noindent (2) A pregroupoid is a groupoid if and only if the following
two maps are isomorphisms:
\[
(p_1,p_2):D\to (R/U)^2 \quad\mbox{and}\quad
(q_{12},q_{23}):E\to D\times_{p_2,R,p_1} D.
\]
\noindent (3) A pregroupoid is a truncated simplicial set:
\[
\begin{tikzcd}[column sep=40]
E \ar[r, shift left=4.5,"q_{12}"]
\ar[r, shift left=1.5,"\nu" description]
\ar[r, shift right=1.5,"\nu^+" description]
\ar[r, shift right=4.5,"q_{23}"']
& D \ar[r, shift left=3,"p_1"]
\ar[r, "c" description] \ar[r, shift right=3,"p_2"']
& R \ar[r, shift left=1.5,"s"] \ar[r, shift right=1.5,"t"'] & U.
\end{tikzcd}
\]
With the usual notations $d_n^i$ and $s_n^i$ for faces
and degenerations, we have $d_3^0=q_{12}$, $d_3^1=\nu$,
$d_3^2=\nu^+$, $d_3^3=q_{23}$, $s_2^0=(\id,e)$,
$s_2^2=(e,\id)$, $d_2^0=p_1$, $d_2^1=c$, $d_2^2=p_2$,
$s_1^0=\lambda$, $s_1^1=\lambda^+$, $d_1^0=t$, $d_1^1=s$,
$s_0^0=e$.
\end{remas}

\begin{noth}{Groupoid closure}
We now construct a left adjoint to the inclusion $\iota$.
This will be called the {\em groupoid closure}, since it is analogous
to the transitive closure of an equivalence relation.
Let $P=(U,R,D,E,\dots)$ be a pregroupoid. We wish to enlarge
$R$ and~$D$ in a universal way so that the vertical maps in the
diagrams below become isomorphisms:
\[
\begin{array}{ccc}
\begin{tikzcd}[row sep=8mm]
D \ar[r,"c"] \ar[d,"{(p_1,p_2)}"'] & R \\
\underset{s,U,t}{R\times R} &
\end{tikzcd}
& \qquad &
\begin{tikzcd}[row sep=8mm]
E \ar[r,"\nu_1"] \ar[d,"{(q_{12},q_{23})}"'] & D \\
\underset{p_2,R,p_1}{D\times D} &
\end{tikzcd}
\end{array}
\]
With this idea in mind, we seek to define a new pregroupoid $P'$:
\begin{itemize}
\item $R'=(R\times_{s,U,t}R) \amalg R$ is the pushout:
\[
\begin{tikzcd}[row sep=10mm]
D \ar[r,"c"] \ar[d,"{(p_1,p_2)}"']
\arrow[rd, phantom, "\ulcorner"]
& R \ar[d,"\rho"] \\
\underset{s,U,t}{R\times R} \ar[r,"c'"] & R'.
\end{tikzcd}
\]
\item $i'=c'i\amalg\rho i:R'\to R'$ with
$c'i:R\times_{s,U,t}R \to R'$ and $\rho i:R\to R'$.
\item $D'=R\times_{s,U,t}R$ and
$i':D'\to D'$ is the inversion of $(R/U)^2$.
\item $E'=D\times_{p_2,R,p_1} D$.
\item $i'=\swap\circ (i,i):E'\to E'$ where $\swap$
swaps the two $D$ factors.
\item $s'=s\pr_2\amalg s:R'\to U$ with
$s\pr_2:R\times_{s,U,t}R \to U$ and $s:R\to U$.
\item $c':D'\to R'$ is the map in the pushout defining $R'$.
\item $e'=\rho e:U\to R'$.
\item $p'_1=\rho\pr_1:D'\to R'$.
\item $\lambda'=(p_1,p_2)\lambda$, $\mu'=(p_1,p_2)\mu$
as maps $R\to D\to D'$.
\item $q'_{12}=(p_1,p_2)\pr_1:E'\to D\to D'$.
\item $\nu'=(1,c):E'\to D'$.
\end{itemize}
Should $\lambda'$ and $\mu'$ be defined on $R'$ instead of
merely on $R$, the data $P'$ would be a pregroupoid, and the
four maps
\[
1:U\to U, \quad \rho:R\to R', \quad (p_1,p_2):D\to D',
\quad (q_{12},q_{23}):E\to E'
\]
would define a morphism of pregroupoids $P\to P'$. Nevertheless
we can define:
\begin{align*}
& \phi(P)=P' \\
& P_n=\phi^n(P)=(U,R_n,D_n,E_n) \\
& P^{\gpd}= \colim P_n.
\end{align*}
The underlying sets of $P^{\gpd}$ are $U^{\gpd}=U$,
$R^{\gpd}=\colim R_n$, $D^{\gpd}=\colim D_n$, $E^{\gpd}=\colim E_n$.
Passing to the limit, the maps $\lambda,\mu:R_n\to D_{n+1}$
yield maps $\lambda^{\gpd},\mu^{\gpd}:R^{\gpd}\to D^{\gpd}$
so that the problem concerning the domain of definition of
these maps disappears at infinity.
\end{noth}

\begin{rema}
It is possible to modify the definition of $P_n$ so as to have
$\lambda',\mu':R_n\to D_n$, making $P_n$ a pregroupoid.
For this, it is enough to replace $D'=R\times_{s,U,t}R$ by a
suitable subset of $R'\times_{s,U,t}R'$ where $c'$ can be
defined. The description of $D'$ is made a little cumbersome
by the fact that $R'\times_{s,U,t}R'$ is an amalgam of four sets.
Since this complication can be avoided by passing to the limit,
we preferred to do it this way.
\end{rema}

\begin{prop} \label{prop:UP of groupoid closure}
With notation as before, the collection $P^{\gpd}$ is a
groupoid. Moreover, the morphism $P\to P^{\gpd}$ is universal
for morphisms from $P$ to a groupoid. Thus the functor
$P\mapsto P^{\gpd}$ is left adjoint to the embedding
$(\Groupoid/U) \intoo (\Pregroupoid/U)$.
\end{prop}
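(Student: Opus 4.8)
The plan is to verify directly that $P^{\gpd}$ satisfies the groupoid axioms and then that the morphism $P\to P^{\gpd}$ enjoys the required universal property; the adjunction statement is then a formal consequence. First I would check that $P^{\gpd}$ is a groupoid. By Remark~\ref{rema:gpds and pgpds}(2), since a filtered colimit of sets commutes with finite fibred products of sets, it suffices to show that the two maps
\[
(p_1^{\gpd},p_2^{\gpd}):D^{\gpd}\to (R^{\gpd}/U)^2,\qquad
(q_{12}^{\gpd},q_{23}^{\gpd}):E^{\gpd}\to D^{\gpd}\times_{p_2,R^{\gpd},p_1}D^{\gpd}
\]
are bijections. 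This is where the inductive construction pays off: at stage $n+1$ one has by fiat $D_{n+1}=R_n\times_{s,U,t}R_n$ and $E_{n+1}=D_n\times_{p_2,R_n,p_1}D_n$, and the transition maps are compatible with these identifications, so passing to the colimit gives the desired isomorphisms. The axioms (associativity, identity, inverse) then hold because they already hold ``up to the domain-of-definition problem'' at each finite stage — the maps $\lambda,\mu,\nu$ and $\lambda^{+},\mu^{+},\nu^{+}$ witnessing them are defined on $R_n$ resp. $E_n$ with target $D_{n+1}$ resp. $D_{n+1}$, and in the colimit the index shift becomes invisible, exactly as in the analogous phenomenon for $\lim\Frob$ discussed in Remark~\ref{rema:Frobenius of preperfection}. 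Concretely, one reads off $e^{\gpd}$, $i^{\gpd}$, $c^{\gpd}$ from the structure maps, uses $c\lambda=c\lambda^{+}=1$ to get the identity axiom, $c\mu=es$, $c\mu^{+}=et$ for the inverse axiom, and $c\nu=c\nu^{+}$ for associativity.

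Next I would establish the universal property. Let $g:P\to \iota(G)$ be a morphism of pregroupoids with $G=(U_G,R_G,s_G,t_G,c_G)$ a groupoid. Since $G$, viewed as a pregroupoid, has $D_G=(R_G/U_G)^2$ and $E_G=(R_G/U_G)^3$, and since all the transition data of the functor $\phi$ are built out of finite fibred products, pushouts along $c$, and the inversion, I claim $g$ extends uniquely to $g':P'\to \iota(G)$, and then by induction to $g_n:P_n\to\iota(G)$ for all $n$, compatibly. The key point is the pushout defining $R'$: a morphism out of $R'=(R\times_{s,U,t}R)\amalg_{D} R$ to $R_G$ is the same as a pair of morphisms $R\times_{s,U,t}R\to R_G$ and $R\to R_G$ agreeing on $D$; the first is forced to be $c_G\circ(g_R\times g_R)$ and the second is $g_R$, and they agree on $D$ precisely because $g$ is a morphism of pregroupoids (this uses $c\lambda=1$ etc., i.e. the relation expressing that $g$ respects $c$). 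Uniqueness at each stage is clear since the underlying set of $P_{n+1}$ is generated (as a colimit diagram) from that of $P_n$. Passing to the colimit yields a unique $g^{\gpd}:P^{\gpd}\to\iota(G)$ with $g^{\gpd}\circ(\text{unit})=g$, which is the asserted universal property; hence $P\mapsto P^{\gpd}$ is left adjoint to $\iota$.

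The main obstacle I anticipate is purely bookkeeping: the functor $\phi$ redefines $R$, $D$, $E$ simultaneously and not ``in lockstep'' (e.g. $\lambda',\mu'$ have domain $R$ but codomain $D'$, not $R'\to D'$), so one must be careful that the colimit genuinely repairs this and that all the compatibility squares defining a morphism of pregroupoids commute after the colimit. The cleanest way to handle this, which I would adopt, is the simplicial reformulation from Remark~\ref{rema:gpds and pgpds}(3): a pregroupoid is a $2$-coskeletal-up-to-degree-$3$ truncated simplicial set, $\phi$ is a step of a small-object-type construction forcing the Segal maps $R'\to R\times_{s,U,t}R$-inverse (i.e. forcing composition to be defined) and $E'\to D\times_R D$ to be isomorphisms, and $P^{\gpd}$ is the colimit; a groupoid is exactly a pregroupoid whose Segal maps in degrees $2$ and $3$ are isomorphisms, so the colimit is automatically a groupoid and the universal property is the standard one for such localization-by-colimit constructions. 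With this packaging the verification reduces to checking the two Segal maps become iso in the colimit, which is immediate from the construction, and no delicate manipulation of the explicit list of twelve structure maps is needed beyond setting up the induction.
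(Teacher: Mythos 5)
Your second half — the universal property — is exactly the paper's argument: the two maps $c_G\circ(g_R\times g_R)$ and $g_R$ into $R_G$ agree on $D$ because $g$ respects $c$ and $p_1,p_2$ (not because of $c\lambda=1$, but that is a slip of attribution, not of substance), so the pushout defining $R'$ yields the extension; one iterates and passes to the colimit. This coincides with the cube diagram in the paper's proof, and your uniqueness remark is correct.

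The genuine gap is in the first half, in the assertion that the groupoid axioms ``already hold up to the domain-of-definition problem at each finite stage'' and hence pass to the colimit by an index shift. The Segal-map part of your argument is fine ($\NN$-colimits commute with fibred products), but the axioms themselves are \emph{not} inherited by $\phi(P)$. The pushout defining $R_{n+1}$ only imposes the relations carried by $D_n$, and for $n\ge 1$ the map $D_n=R_{n-1}\times_{s,U,t}R_{n-1}\to R_n\times_{s,U,t}R_n$ is $\rho\times\rho$, whose image contains only pairs of \emph{old} elements. Hence for a newly adjoined formal composite $x=c'(a,b)$ with $(a,b)$ outside the image of $(p_1,p_2):D\to R\times_U R$, the pair $(x,e(s(x)))$ never lies in the image of any $D_n$, so the relation $c(x,e(s(x)))=x$ is never imposed: axiom (3.a) for $\phi(P)$ holds only on $\rho(R)$, and likewise axiom (5), $c'\nu'=c'\nu'^{+}$, holds only on the image of $(q_{12},q_{23}):E\to E'$. (A one-object example: $R=\{1,a,a^{-1}\}$ with $D$ the minimal set of witnesses; then $x=c'(a,a)$ and $c''(x,1)$ remain distinct forever, so the colimit is a free ``bracketed-words'' object, not the free group on $a$.) Your simplicial repackaging does not evade this: by Remark~\ref{rema:gpds and pgpds}(2), invertibility of the two Segal maps upgrades a \emph{pregroupoid} to a groupoid, so one still must verify that the colimit satisfies axioms (3) and (5) — which is the same identity/inverse/associativity problem. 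Indeed your own analogy with Remark~\ref{rema:Frobenius of preperfection} cuts the other way: ``the limit of Frobenius'' being an isomorphism does not make ``Frobenius of the limit'' one. To close the gap you must either enlarge $D'$ to a suitable subset of $R'\times_{s,U,t}R'$ carrying the missing witnesses $\lambda',\mu',\nu'$ for the new elements (this is what the paper's remark following the construction alludes to), or restrict to the situation where $(p_1,p_2)$ and $(q_{12},q_{23})$ are epimorphisms — as in the paper's application to $\pi_0$ of an fppf groupoid — in which case every $\rho$ is surjective, no new composites are adjoined, $R^{\gpd}$ is a quotient of $R$, and the axioms do follow from those of $P$ by a direct chase.
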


\begin{proof}
The proof is straighforward; we merely give the idea. We start
from a pregroupoid
\[
P=\big(U,(R,i),(D,i),(E,i),s,c,e,p_1,\lambda,\mu,q_{12},\nu\big),
\]
a groupoid $\sP=(\sU,\sR,s,t,c)$ and a morphism of pregroupoids
$P\to \sP$. Let $\sD=\sR\times_{s,\sU,t}\sR$. We have a cube:
\[
\begin{tikzcd}[row sep=4mm,column sep=4mm]
& D \arrow[rr] \arrow[ld] \arrow[dd] & & R \arrow[dd] \arrow[ld] \\
\underset{s,U,t}{R\times R} \arrow[rr, crossing over] \arrow[dd] & &
R' & \\
& \sD \arrow[rr] \arrow[ld, equal] & & \sR \arrow[ld, equal] \\
\underset{s,\sU,t}{\sR\times \sR} \ar[rr] & & \sR \arrow[from=uu, crossing over, dashed] & \\
\end{tikzcd}
\]
By commutativity of the diagram in solid arrows, we can find a
dotted arrow completing the cube with an arrow from the pushout
$R'$. The contruction of a morphism $P'\to \sP$ proceeds along
the same lines. Iterating this construction gives morphisms
$P_n\to\sP$ for all $n$ and finally a morphism $P^{\gpd}\to \sP$.
\end{proof}

\begin{rema} \label{rmk:groupoid_closure}
The construction of the groupoid closure works similarly for
pregroupoids in objects of a category $\sC$ with the following
properties: $\sC$ has fibred products, pushouts, colimits indexed
by $\NN$, and the latter colimits commute with fibred products.
Examples of categories satisfying these properties are the category
of sets; the category of sheaves on a site; the category of
algebraic spaces \'etale over a fixed algebraic space $S$.
\end{rema}

We finish this Appendix with the 2-coequalizer property of
the quotient stack of a pregroupoid in algebraic spaces.
In \cite{SP19},
Tag~\href{http://stacks.math.columbia.edu/tag/044U}{044U}
such a property is stated but we need a statement which is
stronger in three respects: handling the category
$\sHom([U/R],\sX)$ and not just the set $\Hom([U/R],\sX)$;
proving equivalence and not just essential surjectivity;
including pregroupoids.

\begin{coro}\label{coro:coequalizer}
Let $S$ be an algebraic space. All notations being as
in~\ref{exam:motivating}, let
\[
P=(U,R,D,E,\dots)
\]
be a pregroupoid in algebraic spaces over $S$.
Assume that the groupoid closure
\[
P^{\gpd}=(U,R^{\gpd},D^{\gpd},E^{\gpd},\dots)
\]
exists and is an fppf groupoid (this holds for example if $P$
is a pregroupoid in \'etale algebraic spaces, or if $P$ is an
fppf groupoid).
Let $\pi:U\to [U/R]$ be the quotient stack of the groupoid
closure $P^{\gpd}$ and $\alpha\colon \pi s\to \pi t$ the
canonical 2-isomorphism, such that
$\alpha c=\alpha p_1\circ \alpha p_2$. For each $S$-stack
in groupoids $\sX$, let
\[
\eq \big(\!\!
\begin{tikzcd}[column sep=20]
\sHom(U,\sX) \ar[r,shift left,"s^*"]
\ar[r,shift right,"t^*"'] & \sHom(R,\sX)
\end{tikzcd}
\!\!\big)
\]
be the ``equalizer'' category described as follows:
\begin{itemize}
\item[\rm (i)]
objects are pairs $(f,\beta)$
composed of a $1$-morphism $f\colon U\to \sX$ and a
$2$-isomorphism $\beta\colon fs\to ft$ such that
$\beta c=\beta p_1\circ \beta p_2$.
\item[\rm (ii)] morphisms $(f_1,\beta_1)\to (f_2,\beta_2)$
are 2-isomorphisms $\varphi:f_1\to f_2$ such
that $\beta_2\circ \varphi s=\varphi t\circ\beta_1$.
\end{itemize}
Then the functor
\[
\begin{tikzcd}[column sep=20,row sep=-1]
\sHom([U/R],\sX) \ar[r] &
\eq \big(\sHom(U,\sX) \ar[r,shift left,"s^*"]
\ar[r,shift right,"t^*"'] & \sHom(R,\sX)\big) \\
\qquad\qquad\qquad\ g \ar[r,mapsto] & (f=g\pi,\beta=g\alpha) &
\end{tikzcd}
\]
is an equivalence of categories.
\end{coro}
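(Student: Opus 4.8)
The plan is to prove this in two stages: first reduce from the pregroupoid $P$ to its groupoid closure $P^{\gpd}$, and then treat an fppf groupoid by $2$-descent along the fppf atlas $\pi\colon U\to [U/R]$. Observe at the outset that the ``equalizer'' category in the statement refers only to the objects $U,R,D$ and to the maps $s,t,c,p_1,p_2$; in particular it makes sense for each of the intermediate data $P_n=\phi^n(P)$ occurring in the construction of the groupoid closure, even though $P_n$ itself need not be a pregroupoid.

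\textbf{Step 1: reduction to a groupoid.} I would show that for every $S$-stack $\sX$ the restriction functor along $\rho\colon R_n\to R_{n+1}$,
\[
\eq\big(\sHom(U,\sX)\toto\sHom(R_{n+1},\sX)\big)\too
\eq\big(\sHom(U,\sX)\toto\sHom(R_n,\sX)\big),
\]
is an equivalence of categories. Given $(f,\beta)$ with $\beta\colon fs\to ft$ over $R_n$ satisfying $\beta c=\beta p_1\circ\beta p_2$ over $D_n$, one glues $\beta$ on the copy of $R_n$ to the composite $2$-isomorphism $\beta\pr_1\circ\beta\pr_2$ on $R_n\times_{s,U,t}R_n$: the two restrict to the same $2$-isomorphism over $D_n$ \emph{precisely} by the cocycle condition, so the pushout $R_{n+1}=(R_n\times_{s,U,t}R_n)\amalg_{D_n}R_n$ yields a $2$-isomorphism $\beta'$ over $R_{n+1}$, for which the cocycle condition over $D_{n+1}=R_n\times_{s,U,t}R_n$ is then a tautology. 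This gives essential surjectivity; full faithfulness is a parallel diagram chase with the compatibility condition defining morphisms of the equalizer category. Since by hypothesis (Remark~\ref{rmk:groupoid_closure}) the $\NN$-indexed colimits computing $R^{\gpd}$ and $D^{\gpd}$ commute with the relevant fibre products, passing to the colimit turns this into an equivalence between the equalizer category of $P$ and that of $P^{\gpd}$, and the $2$-isomorphism so produced over $R^{\gpd}$ satisfies the genuine groupoid cocycle condition over $R^{\gpd}\times_{s,U,t}R^{\gpd}$.

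\textbf{Step 2: the case of an fppf groupoid.} Now $P^{\gpd}$ is an fppf groupoid, $\sQ=[U/R]$ is its quotient stack, and $\pi\colon U\to\sQ$ is an fppf covering with $U\times_\sQ U\simeq R^{\gpd}$, the two projections being $s$ and $t$. I would then show that
\[
\sHom(\sQ,\sX)\too
\eq\big(\sHom(U,\sX)\toto\sHom(R^{\gpd},\sX)\big),\qquad
g\longmapsto\big(g\pi,\,g\alpha^{\gpd}\big),
\]
is an equivalence. An object of the target is a descent datum for $\sX$ along the covering $\pi$; since $\sX$ is a stack for the fppf topology these are effective, which is \cite{SP19} Tag~\href{http://stacks.math.columbia.edu/tag/044U}{044U}, promoted from the set of objects to the category of objects by combining it with the universal property of $[U/R]$ as a stackification (Proposition~\ref{prop:UP of groupoid closure}). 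For full faithfulness one uses that for $g_1,g_2\colon\sQ\to\sX$ one has $\pi^*\underline\Isom_\sQ(g_1,g_2)=\underline\Isom_U(g_1\pi,g_2\pi)$; since $\underline\Isom$ is a sheaf and $\pi$ is a covering, $2$-isomorphisms $g_1\to g_2$ correspond bijectively to $2$-isomorphisms $\varphi$ over $U$ satisfying the equivariance $\beta_2\circ\varphi s=\varphi t\circ\beta_1$, i.e. to morphisms in the equalizer category. Composing the equivalences of Steps~1 and~2, and noting that $\alpha^{\gpd}$ restricts along $\rho$ to the canonical $\alpha$ of the statement, yields the corollary.

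\textbf{Main obstacle.} The conceptual backbone, effectivity of fppf descent for stacks, is standard; the real work lies in Step~1, in gluing $2$-isomorphisms across the iterated pushouts of the groupoid-closure construction and checking that both the cocycle condition and the morphism-compatibility condition are preserved at each stage and survive the passage to the colimit. The one point that needs genuine care is that the pushout square with vertices $D_n$, $R_n$, $R_n\times_{s,U,t}R_n$, $R_{n+1}$ interacts correctly with base change over $U$, which is exactly where the hypothesis that $\NN$-indexed colimits commute with fibre products is used.
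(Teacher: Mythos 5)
Your proposal is correct and follows essentially the same route as the paper: the groupoid case is handled by \cite{SP19} Tag 044U exactly as in the text, and the reduction from $P$ to $P^{\gpd}$ is the same mechanism, except that you re-run the iterated pushout construction of the closure directly on the equalizer categories (gluing $\beta$ with $\beta\pr_1\circ\beta\pr_2$ over each $R_{n+1}$ and passing to the colimit), whereas the paper packages this by interpreting an object $(f,\beta)$ as a morphism of pregroupoids into $\sX$ and citing the universal property of Proposition~\ref{prop:UP of groupoid closure}. Since that universal property is itself proved by the very pushout-and-colimit argument you carry out, the two proofs coincide in substance.
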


Before we pass to the proof, here are pictures for the
2-morphisms $\beta$ and $\varphi$:
\[
\begin{tikzcd}[column sep=150]
D \arrow[r, shift left, "{fsp_2=fsc}",pos=0.39,bend left=35, ""{name=U1,below,pos=0.505}, ""{name=U2,below,pos=0.71}]
\ar[r, "{ftp_2=fsp_1}",pos=0.25,""{name=M1,above,pos=0.5},""{name=M2,below,pos=0.5}]
\arrow[r, shift right, "{ftp_1=ftc}" swap,pos=0.37,bend right=35, ""{name=D1,above,pos=0.495}, ""{name=D2,above,pos=0.7}]
& \sX
\arrow[Rightarrow, from=U1, to=M1,"\,\beta p_2"]
\arrow[Rightarrow, from=M2, to=D1,"\,\beta p_1"]
\arrow[Rightarrow, from=U2, to=D2, crossing over, "\,\beta c",pos=0.3]
\end{tikzcd}
\qquad\qquad\qquad
\begin{tikzcd}[column sep=30]
f_1s \ar[r,"\varphi s"] \ar[d,"\beta_1"'] & f_2s \ar[d,"\beta_2"] \\
f_1t \ar[r,"\varphi t"] & f_2t
\end{tikzcd}
\]

\begin{proof}
Set $\sH=\sH(P)=\sHom([U/R],\sX)$ and
$\sE=\sE(P)=\eq(\sHom(U,\sX) \toto \sHom(R,\sX))$. Let
$F:\sH\to\sE$ be the functor in the statement.

Suppose first that $P$ is a groupoid. For each pair $(f,\beta)$,
Lemma Tag~\href{http://stacks.math.columbia.edu/tag/044U}{044U}
in \cite{SP19} produces functorially a morphism $g:[U/R]\to\sX$
and a 2-isomorphism $\epsilon:g\pi\isomto f$. That is, we have
a functor $G:\sE\to\sH$ and an isomorphism $\epsilon:FG\isomto \id$.
Moreover the proof of {\em loc. cit.} shows that $GF$ is equal to
the identity; hence $F$ and $G$ are quasi-inverse equivalences.

Suppose now that $P$ is a pregroupoid with a groupoid closure
$P^{\gpd}$ which is an fppf groupoid. The morphism of
pregroupoids $P\to P^{\gpd}$ induces a functor $\sE(P^{\gpd})\to\sE(P)$
which we claim is an equivalence. To show this, note
that an object $(f,\beta)\in \sE(P)$ is the same thing as a
morphism of prestacks of pregroupoids from $P$ to $\sX$, namely:
\begin{itemize}
\item the map from $U$ to objects of $\sX$ is given by the
1-morphism $f$, namely each object $u\in U(T)$ is mapped to
$f(u)\in \sX(T)$,
\item the map from $R$ to arrows of $\sX$ is given by the
2-morphism $\beta$, namely each arrow $r\in R(T)$ is mapped
to the arrow $\beta(r):fs(r)\to ft(r)$ in $\sX(T)$,
\item the maps from $D$ to pairs of composable arrows of $\sX$,
and from $E$ to triples of composable arrows of $\sM$, are
determined by the previous ones because $\sX$ is a stack in
groupoids, see Remark~\ref{rema:gpds and pgpds}(2). Namely, the
former is $(\beta p_1,\beta p_2)$ and the latter is
$(\beta q_1,\beta q_2,\beta q_3)$,
\item the condition $\beta c=\beta p_1\circ \beta p_2$ ensures
that the map on arrows is compatible with composition; one sees
easily that is also implies compatibility with associativity.
\end{itemize}
Eventually the universal property of the groupoid closure
(Proposition~\ref{prop:UP of groupoid closure}) shows
that the functor $\sE(P^{\gpd})\to\sE(P)$ is an equivalence.
Since $\sH(P)\to \sE(P^{\gpd})$ is an equivalence, so is
$\sH(P)\to \sE(P)$.
\end{proof}

\addcontentsline{toc}{section}{References}

\bigskip

\noindent
Yuliang HUANG,
{\sc Univ Rennes, CNRS, IRMAR - UMR 6625, F-35000 Rennes, France} \\
Email address: {\tt yu-liang.huang@univ-rennes1.fr}

\bigskip

\noindent
Giulio ORECCHIA,
{\sc Univ Rennes, CNRS, IRMAR - UMR 6625, F-35000 Rennes, France} \\
Email address: {\tt giulio.orecchia@univ-rennes1.fr}

\bigskip

\noindent
Matthieu ROMAGNY,
{\sc Univ Rennes, CNRS, IRMAR - UMR 6625, F-35000 Rennes, France} \\
Email address: {\tt matthieu.romagny@univ-rennes1.fr}

\end{document}